
\documentclass[11pt]{amsart}
\usepackage{amsopn}
\usepackage{amssymb, amscd}

\usepackage{MnSymbol}

\topmargin 0cm
\evensidemargin 0.5cm
\oddsidemargin 0.5cm
\textwidth 15cm \textheight 22.5cm

\newcommand{\nc}{\newcommand}

\nc{\eg}{\mathfrak{e} } \nc{\fg}{\mathfrak{f} } \nc{\vg}{\mathfrak{v} } \nc{\wg}{\mathfrak{w} }
\nc{\zg}{\mathfrak{z} } \nc{\ngo}{\mathfrak{n} } \nc{\kg}{\mathfrak{k} }
\nc{\mg}{\mathfrak{m} } \nc{\bg}{\mathfrak{b} } \nc{\ggo}{\mathfrak{g} }
\nc{\ggob}{\overline{\mathfrak{g}} } \nc{\sog}{\mathfrak{so} }
\nc{\sug}{\mathfrak{su} } \nc{\spg}{\mathfrak{sp} } \nc{\slg}{\mathfrak{sl} }
\nc{\glg}{\mathfrak{gl} } \nc{\cg}{\mathfrak{c} } \nc{\rg}{\mathfrak{r} }
\nc{\hg}{\mathfrak{h} } \nc{\tg}{\mathfrak{t} } \nc{\ug}{\mathfrak{u} }
\nc{\dg}{\mathfrak{d} } \nc{\ag}{\mathfrak{a} } \nc{\pg}{\mathfrak{p} }
\nc{\sg}{\mathfrak{s} } \nc{\affg}{\mathfrak{aff} }

\nc{\pca}{\mathcal{P}} \nc{\nca}{\mathcal{N}} \nc{\lca}{\mathcal{L}}
\nc{\oca}{\mathcal{O}} \nc{\mca}{\mathcal{M}} \nc{\tca}{\mathcal{T}}
\nc{\aca}{\mathcal{A}} \nc{\cca}{\mathcal{C}} \nc{\gca}{\mathcal{G}}
\nc{\sca}{\mathcal{S}} \nc{\hca}{\mathcal{H}} \nc{\bca}{\mathcal{B}}
\nc{\dca}{\mathcal{D}} \nc{\zca}{\mathcal{Z}}

\nc{\val}{\operatorname{val}}
\nc{\vp}{\varphi} \nc{\ddt}{\tfrac{d}{dt}} \nc{\dds}{\tfrac{{\rm d}}{{\rm d}s}}
\nc{\dpar}{\tfrac{\partial}{\partial t}} \nc{\im}{\mathtt{i}}

\renewcommand{\Im}{{\rm Im}}

\nc{\SO}{\mathrm{SO}} \nc{\Spe}{\mathrm{Sp}} \nc{\Sl}{\mathrm{SL}}
\nc{\SU}{\mathrm{SU}} \nc{\Or}{\mathrm{O}} \nc{\U}{\mathrm{U}} \nc{\Gl}{\mathrm{GL}}
\nc{\Se}{\mathrm{S}} \nc{\Cl}{\mathrm{Cl}} \nc{\Spein}{\mathrm{Spin}}
\nc{\Pin}{\mathrm{Pin}} \nc{\G}{\mathrm{GL}_n(\RR)} \nc{\g}{\mathfrak{gl}_n(\RR)}

\nc{\RR}{{\Bbb R}} \nc{\HH}{{\Bbb H}} \nc{\CC}{{\Bbb C}} \nc{\ZZ}{{\Bbb Z}}
\nc{\FF}{{\Bbb F}} \nc{\NN}{{\Bbb N}} \nc{\QQ}{{\Bbb Q}} \nc{\PP}{{\Bbb P}}

\nc{\vs}{\vspace{.2cm}} \nc{\vsp}{\vspace{1cm}} \nc{\ip}{\langle\cdot,\cdot\rangle}
\nc{\ipp}{(\cdot,\cdot)} \nc{\la}{\langle} \nc{\ra}{\rangle} \nc{\unm}{\tfrac{1}{2}}
\nc{\unc}{\tfrac{1}{4}} \nc{\und}{\tfrac{1}{16}} \nc{\no}{\vs\noindent}
\nc{\lamkn}{\Lambda^2(\RR^{q+n})^*\otimes\RR^{q+n}} \nc{\lamn}{\Lambda^2(\RR^n)^*\otimes\RR^n} \nc{\lamp}{\Lambda^2\pg^*\otimes\pg}
\nc{\lamg}{\Lambda^2\ggo^*\otimes\ggo} \nc{\lamngo}{\Lambda^2\ngo^*\otimes\ngo}
\nc{\tangz}{{\rm T}^{\rm Zar}} \nc{\mum}{/\!\!/} \nc{\kir}{/\!\!/\!\!/}
\nc{\Ri}{\tfrac{4\Ricci_{\mu}}{||\mu||^2}} \nc{\ds}{\displaystyle}
\nc{\ben}{\begin{enumerate}} \nc{\een}{\end{enumerate}} \nc{\f}{\frac}
\nc{\lb}{[\cdot,\cdot]} \nc{\isn}{\tfrac{1}{||v||^2}}
\nc{\gkp}{(\ggo=\kg\oplus\pg,\ip)} \nc{\ukh}{(\ug=\kg\oplus\hg,\ip)}

\nc{\Hess}{\operatorname{Hess}} \nc{\ad}{\operatorname{ad}}
\nc{\Ad}{\operatorname{Ad}} \nc{\rank}{\operatorname{rank}}
\nc{\Irr}{\operatorname{Irr}} \nc{\End}{\operatorname{End}}
\nc{\Aut}{\operatorname{Aut}} \nc{\Inn}{\operatorname{Inn}}
\nc{\Der}{\operatorname{Der}} \nc{\Ker}{\operatorname{Ker}}
\nc{\Iso}{\operatorname{I}} \nc{\Diff}{\operatorname{Diff}}
\nc{\Lie}{\operatorname{Lie}} \nc{\tr}{\operatorname{tr}} \nc{\dif}{\operatorname{d}}
\nc{\sen}{\operatorname{sen}} \nc{\modu}{\operatorname{mod}}
\nc{\Riem}{\operatorname{Rm}} \nc{\Ricci}{\operatorname{Ric}}
\nc{\sym}{\operatorname{sym}} \nc{\symac}{\operatorname{sym^{ac}}}
\nc{\symc}{\operatorname{sym^{c}}} \nc{\scalar}{\operatorname{R}}
\nc{\grad}{\operatorname{grad}} \nc{\ricci}{\operatorname{Rc}}
\nc{\nr}{\operatorname{nr}} \nc{\riccic}{\operatorname{ric^{c}}}
\nc{\riccig}{\operatorname{ric^{\gamma}}} \nc{\Rin}{\operatorname{M}}
\nc{\Le}{\operatorname{L}} \nc{\tang}{\operatorname{T}}
\nc{\level}{\operatorname{level}} \nc{\rad}{\operatorname{r}}
\nc{\abel}{\operatorname{ab}} \nc{\CH}{\operatorname{CH}}
\nc{\mcc}{\operatorname{mcc}} \nc{\Adj}{\operatorname{Adj}}
\nc{\Order}{\operatorname{O}} \nc{\mm}{\operatorname{M}}
\nc{\inj}{\operatorname{inj}} \nc{\Pf}{\operatorname{Pf}}
\nc{\vol}{\operatorname{vol}} \nc{\Diag}{\operatorname{Diag}}

\theoremstyle{plain}
\newtheorem{theorem}{Theorem}[section]
\newtheorem{proposition}[theorem]{Proposition}
\newtheorem{corollary}[theorem]{Corollary}
\newtheorem{lemma}[theorem]{Lemma}

\theoremstyle{definition}
\newtheorem{definition}[theorem]{Definition}

\theoremstyle{remark}
\newtheorem{remark}[theorem]{Remark}

\newtheorem{example}[theorem]{Example}

\title{On nonsingular two-step nilpotent Lie algebras}

\author{Jorge Lauret} \author{David Oscari}

\address{FaMAF and CIEM, Universidad Nacional de C\'ordoba, 5000 C\'ordoba, Argentina}
\email{lauret@famaf.unc.edu.ar, oscari@famaf.unc.edu.ar}

\thanks{This research was partially supported by grants from CONICET, FonCyT (Argentina)
and SeCyT (Universidad Nacional de C\'ordoba)}

\begin{document}

\maketitle

\begin{abstract}
A $2$-step nilpotent Lie algebra $\ngo$ is called {\it nonsingular} if $\ad{X}:\ngo\rightarrow [\ngo,\ngo]$ is onto for any $X\notin[\ngo,\ngo]$.  We explore nonsingular algebras in several directions, including the classification problem (isomorphism invariants), the existence of canonical inner products (nilsolitons) and their automorphism groups (maximality properties).  Our main tools are the moment map for certain real reductive representations, and the Pfaffian form of a $2$-step algebra, which is a positive homogeneous polynomial in the nonsingular case.
\end{abstract}

\tableofcontents


\section{Introduction}

We study, for a given pair $\ngo_1$, $\ngo_2$ of real vector spaces, bilinear skew-symmetric maps
$$
\mu:\ngo_1\times\ngo_1\longrightarrow\ngo_2,
$$
such that the function $(X,Y)\mapsto \alpha(\mu(X,Y))$ is a non-degenerate $2$-form on $\ngo_1$ for any nonzero $\alpha\in\ngo_2^*$.  This is precisely the condition the O'Neill tensor of a Riemannian submersion must necessarily satisfy at each point in order for all vertizontal $2$-planes to have positive sectional curvature, in which case the bundle is called {\it fat} (see \cite{FlrZll}).  On the other hand, each of such $\mu$'s defines a $2$-step nilpotent Lie algebra $(\ngo=\ngo_1\oplus\ngo_2,\mu)$, often called {\it nonsingular} (also {\it regular}, or {\it fat}) in the literature under the above condition.  The class of nonsingular nilpotent Lie groups strictly contains the well-known {\it H-type} groups introduced by A. Kaplan \cite{Kpl}, and their left-invariant metrics enjoy very nice properties concerning curvature and totally geodesic submanifolds which were proved by P. Eberlein in \cite{Ebr}.

If $\dim{\ngo_1}=m$ and $\dim{\ngo_2}=n$, then the {\it isomorphism} relation between two nonsingular algebras is given by the natural $\Gl_m\times\Gl_n$-action on the vector space $V_{n,m}:=\Lambda^2\ngo_1^*\otimes\ngo_2$ and coincides with Lie algebra isomorphism.  Only very special values for the {\it type} $(n,m)$ of a nonsingular algebra are allowed, which coincide with the types of $H$-type algebras and include $n=1$, $m=2k$; $n=2,3$, $m=4k$; $n=4,\dots, 7$, $m=8k$; etc.  The following natural questions arise from both the algebraic and geometric points of view:

\begin{itemize}
\item[(i)] How `wild' is the classification problem for nonsingular algebras up to isomorphism?

\item[(ii)] Given a nonsingular algebra $\mu:\ngo_1\times\ngo_1\longrightarrow\ngo_2$, is there a canonical inner product on $\ngo_1\oplus\ngo_2$ attached to $\mu$?

\item[(iii)] Is any $H$-type algebra the `most symmetric' one among all nonsingular algebras of the same type?  This in the sense that its automorphism group has maximal dimension, or equivalently, its $\Gl_m\times\Gl_n$-orbit in $V_{n,m}$ is of minimal dimension.
\end{itemize}

The classification of $2$-step algebras is `hopeless' from many rigorous points of view (see e.g. \cite{BltLpySrg,Ebr5}).  A complete classification is only known for the types $(1,m)$ (all Heisenberg algebras up to abelian factors), $(2,m)$ (see \cite{Ggr,LvsTrb} or Section \ref{2m}), $(5,5)$ and $(n,m)$ with $n+m\leq 9$ (over $\CC$, see \cite{GltTms}).  By using pencil invariants, we show that nonsingular algebras of type $(2,m)$ are parameterized by sets of the form $\sca=\{ (\alpha_1,k_1), \dots  ,(\alpha_r,k_r)\}$, where $\alpha_i\in \CC\smallsetminus\RR$, $k_i\in\NN$, and isomorphism classes correspond to $\Gl_2(\RR)$-orbits of such sets relative to the $\Gl_2(\RR)$-action on $\CC$ by M\"obius transformations (see Section \ref{2m}).

For a given inner product $\ip$ on $\ngo=\ngo_1\oplus\ngo_2$ (with $\ngo_1\perp\ngo_2$), one can encode the structural constants of $\mu$ in a map $J_\mu:\ngo_2\longrightarrow\sog(\ngo_1)$ defined by
$$
\la J_\mu(Z)X,Y\ra = \la\mu(X,Y),Z\ra, \qquad \forall X,Y\in\ngo_1, \; Z\in\ngo_2.
$$
There is a nice and useful isomorphism invariant for $2$-step algebras (with $m$ even) called the {\it Pfaffian form}, which is the projective equivalence class of the homogeneous polynomial $f_\mu$ of degree $m/2$ in $n$ variables defined by
$$
f_\mu(Z)^2=\det{J_\mu(Z)}, \qquad \forall Z\in\ngo_2,
$$
for each $\mu$ of type $(n,m)$ (see Section \ref{pf} for a more precise definition).  It is an immediate, though quite an intriguing fact, that $\mu$ is nonsingular if and only if $f_\mu$ is a {\it positive polynomial} (i.e. $f_\mu(x)>0$ for any nonzero $x\in\RR^n$).  Even though the theory of positive polynomials has a very rich presence in the literature, a complete classification of them, up to projective equivalence, seems to be as difficult as the classification of all forms and thus is also unknown and mostly intractable, even in low dimensional cases like ternary quartics (i.e. $(n,m)=(3,8)$).  Nevertheless, in this paper, the Pfaffian form has proved to be a powerful tool to exhibit continuous families of pairwise non-isomorphic nonsingular algebras.  It is proved in \cite{Osc2} that any positive ternary quartic form is the Pfaffian form of at least one nonsingular algebra of type $(3,8)$, showing that a reasonable classification of nonsingular algebras of type $(3,8)$ is hopeless.

Concerning question (ii) above, the meaning of the word `canonical' is part of the problem.  The maps $J_\mu(Z)$ provide a nice tool to consider compatibility conditions between $\mu$ and $\ip$.  For instance, when $J_\mu(Z)^2=-\| Z\|^2I$ for any $Z\in\ngo_2$, the metric Lie algebra $(\ngo,\mu,\ip)$ is called {\it H-type} (see \cite{Kpl,BrnTrcVnh}), and if more generally, $Z\mapsto(-\det{J_\mu(Z)^2})^{1/m}$ is a quadratic form, then the algebra is said to be of {\it $\tilde{H}$-type} (see \cite{LvsTrb}).

We may also consider the following condition involving only $\mu$ and $\ip$, as another generalization of $H$-type: for any (or some) orthonormal basis $\{ Z_i\}$ of $\ngo_2$,
\begin{equation}\label{nilsoint}
\sum J_\mu(Z_i)^2=aI, \qquad \tr{J_\mu(Z_i)J_\mu(Z_j)}=b\delta_{ij}, \qquad \mbox{for some} \quad a,b<0.
\end{equation}
It follows from well-known results in geometric invariant theory (see Section \ref{git}) that if condition (\ref{nilsoint}) holds, then:

\begin{itemize}
\item $\mu$ is a minimal vector for the $\Sl_m\times\Sl_n$-action on $V_{n,m}$ (i.e. $\|\mu\|\leq\|h\cdot\mu\|$ for any $h\in\Sl_m\times\Sl_n$).

\item The $\Sl_m\times\Sl_n$-orbit of $\mu$ is closed in $V_{n,m}$.

\item The set of minimal vectors in $(\Sl_m\times\Sl_n)\cdot\mu$ consists of a single $\SO(m)\times\SO(n)$-orbit.  We will strongly use this uniqueness property to distinguish, up to isomorphism, those special nonsingular algebras satisfying (\ref{nilsoint}) by using $\SO(m)\times\SO(n)$-invariants, which are much more abundant.
\end{itemize}

From a different geometric point of view, condition (\ref{nilsoint}) implies that the left invariant metric $g$ determined by $(\ngo,\mu,\ip)$ on the corresponding simply connected nilpotent Lie group $N$ maximizes the scalar curvature among all left-invariant metrics on $N$ with $\ngo_1\perp\ngo_2$ and the same volume.  It is also a minimum of the square norm of the Ricci tensor among all left-invariant metrics on $N$ with the same scalar curvature.  On the other hand, $(N,g)$ is known to be a {\it Ricci soliton}, i.e. the Ricci flow solution starting at $g$ evolves only by time-dependent scaling and pull-back by diffeomorphisms.  Such metrics are called {\it nilsolitons} in the literature and have been extensively studied in the last decade (see e.g. the survey \cite{cruzchica}).  A metric satisfying (\ref{nilsoint}) has also been called a {\it metric with optimal Ricci tensor}, and in particular, has a geodesic-flow invariant Ricci tensor (see \cite[Section 7]{Ebr4} and \cite[Section 7]{Ebr2}).  An inner product for which (\ref{nilsoint}) holds is unique up to scaling and automorphisms of $\mu$ (i.e. up to homothety), which also supports the presentation of these inner products as canonical or distinguished for a given $\mu$.

We prove in this paper that condition (\ref{nilsoint}) is actually also necessary for any nilsoliton on a nonsingular algebra (see Theorem \ref{nilthm} for a more complete statement).

\begin{theorem}
If $\mu$ is a nonsingular algebra such that $\ip$ is a nilsoliton, then condition {\rm (\ref{nilsoint})} holds.
\end{theorem}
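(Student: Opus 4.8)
The plan is to translate the nilsoliton hypothesis into the standard algebraic form and then exploit nonsingularity twice, first to rigidify the data on $\ngo_2$ and then on $\ngo_1$. Recall that $\ip$ being a nilsoliton is equivalent to $\Ricci=cI+D$ for some $c\in\RR$ and some $D\in\Der(\mu)$ (see the survey \cite{cruzchica}). Since $\mu$ is $2$-step with $\ngo_2=[\ngo,\ngo]$ and $\ngo_1\perp\ngo_2$, the Ricci operator is block diagonal for $\ngo=\ngo_1\oplus\ngo_2$, with $\Ricci|_{\ngo_1}=\unm\sum_k J_\mu(Z_k)^2$ and $\la\Ricci Z,Z'\ra=-\unc\tr(J_\mu(Z)J_\mu(Z'))$ for any orthonormal basis $\{Z_k\}$ of $\ngo_2$ (see \cite{Ebr}). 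In particular condition (\ref{nilsoint}) is precisely the assertion that each block $\Ricci|_{\ngo_1}$ and $\Ricci|_{\ngo_2}$ is a scalar multiple of the identity, so this is what I aim to prove. Writing $D=D_1\oplus D_2$ (symmetric and block diagonal, since $\Ricci$ and $I$ are), the derivation identity $D\mu(X,Y)=\mu(D_1X,Y)+\mu(X,D_1Y)$ translates, via $J_\mu$, into
$$J_\mu(D_2 Z)=D_1 J_\mu(Z)+J_\mu(Z)D_1,\qquad \forall Z\in\ngo_2. \quad (\ast)$$

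First I would show that $D_2$ is scalar. Decompose $\ngo_1=\bigoplus_\lambda V_\lambda$ and $\ngo_2=\bigoplus_\nu W_\nu$ into eigenspaces of the symmetric operators $D_1$, $D_2$. Comparing $V_{\lambda'}$-components in $(\ast)$ for $Z\in W_\nu$ shows that $J_\mu(Z)$ maps $V_\lambda$ into $V_{\nu-\lambda}$. Here nonsingularity enters: each $J_\mu(Z)$ with $Z\neq0$ is invertible, so this block-antidiagonal map forces $\dim V_\lambda=\dim V_{\nu-\lambda}$, i.e. the eigenvalue multiset of $D_1$ is invariant under the reflection $\lambda\mapsto\nu-\lambda$, for every eigenvalue $\nu$ of $D_2$. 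If $D_2$ had two distinct eigenvalues $\nu_1\neq\nu_2$, composing the two reflections would make this finite multiset invariant under the nonzero translation $\lambda\mapsto\lambda+(\nu_1-\nu_2)$, which is impossible. Hence $D_2=\nu I$ for a single $\nu$, which is one half of (\ref{nilsoint}).

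With $D_2=\nu I$, equation $(\ast)$ becomes $\{D_1,J_\mu(Z)\}=\nu J_\mu(Z)$, i.e. $\tilde D:=D_1-\tfrac{\nu}{2}I$ anticommutes with every $J_\mu(Z)$. The heart of the argument is then a trace identity. On one hand, conjugating by any invertible $J_\mu(Z_1)$ gives $\tilde D=-J_\mu(Z_1)\tilde D J_\mu(Z_1)^{-1}$, whence $\tr\tilde D=0$. On the other hand, anticommutation and cyclicity of the trace yield $\tr(\tilde D J_\mu(Z_k)^2)=-\tr(\tilde D J_\mu(Z_k)^2)$, hence $\tr(\tilde D J_\mu(Z_k)^2)=0$ for each $k$; summing and using $\Ricci|_{\ngo_1}=\unm\sum_k J_\mu(Z_k)^2=\tilde D+\gamma I$ (with $\gamma=c+\tfrac{\nu}{2}$) gives $0=\tr(\tilde D\,\Ricci|_{\ngo_1})=\tr(\tilde D^2)+\gamma\tr\tilde D=\|\tilde D\|^2$. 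Therefore $\tilde D=0$, so $\Ricci|_{\ngo_1}$ is scalar as well, completing (\ref{nilsoint}); the signs $a,b<0$ follow from the negative (resp.\ positive) definiteness of $\Ricci|_{\ngo_1}$ (resp.\ $\Ricci|_{\ngo_2}$), again a consequence of nonsingularity.

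The two places where nonsingularity is indispensable, namely the invertibility forcing the reflection symmetry of $\mathrm{spec}(D_1)$ and the conjugation forcing $\tr\tilde D=0$, are the crux of the matter; I expect the main obstacle to be establishing the first step cleanly, since without the rigidity coming from invertible $J_\mu(Z)$ the eigenvalue bookkeeping for $D_2$ genuinely fails (indeed $(\ast)$ alone admits non-scalar solutions).
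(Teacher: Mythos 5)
Your proof is correct, and it takes a genuinely different route from the paper's. The paper proves (ii)$\Rightarrow$(i) of Theorem \ref{nilthm} by geometric invariant theory: nonsingularity makes the Pfaffian form positive, hence semistable (Lemma \ref{possemi}), hence $0\notin\overline{\Sl_m\times\Sl_n\cdot\mu}$; the stratification machinery of \cite{standard} and \cite{einsteinsolv} then places $\mu$ in the stratum $\sca_\beta$ for $\beta=\Diag(-\tfrac{2}{m}I_m,\tfrac{1}{n}I_n)$ and forces $m(\mu)$ to be conjugate to $\beta$, so that $\Ricci$ has exactly two eigenvalues of opposite signs, which by the sign of each block gives (\ref{nilso}). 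You instead work directly with the algebraic soliton equation $\Ricci=cI+D$: the derivation identity $J_\mu(D_2Z)=D_1J_\mu(Z)+J_\mu(Z)D_1$ (which is exactly the paper's defining relation for $\Der_{gr}(\mu)$ in Section \ref{aut}), the invertibility of $J_\mu(Z)$ forcing the spectrum of $D_1$ to be symmetric about $\nu/2$ for every eigenvalue $\nu$ of $D_2$ (whence $D_2$ is scalar, since a finite multiset admits no nonzero translation symmetry), and then the anticommutation/trace argument killing $\tilde D=D_1-\tfrac{\nu}{2}I$. All steps check out, including $\tr\tilde D=0$ via conjugation by an invertible $J_\mu(Z_1)$ and the identification of (\ref{nilso}) with both Ricci blocks being scalar with the correct signs. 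What your approach buys is a short, self-contained, purely linear-algebraic proof that avoids Lemma \ref{possemi} and the stratification results entirely; it is close in spirit to the alternative via Nikolayevsky's pre-Einstein derivation mentioned in the paper's first Remark after Theorem \ref{nilthm}. What it gives up is the simultaneous equivalence with closedness of the $\Sl_m\times\Sl_n$-orbit (part (iii)), which the paper's GIT argument delivers as part of the same package and which is then used repeatedly in Sections \ref{2m} and \ref{38} to decide existence and non-existence of nilsolitons by degeneration arguments.
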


In \cite{Nkl3}, Y. Nikolayevsky gave a complete classification of $2$-step nilpotent Lie algebras of type $(2,m)$ admitting a nilsoliton.  We give in Section \ref{2m} an alternative proof in the nonsingular case.

\begin{theorem}
A nonsingular algebra $\mu_{\sca}$ of type $(2,m)$, where $\sca=\{ (\alpha_1,k_1), \dots  ,(\alpha_r,k_r)\}$, admits a nilsoliton if and only if $k_1=\dots =k_r=1$.
\end{theorem}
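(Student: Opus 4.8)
The plan is to translate the analytic problem into a closed-orbit question in geometric invariant theory and then to read off closedness from the pencil. First I would invoke Theorem \ref{nilthm} to replace the statement ``$\mu_\sca$ admits a nilsoliton'' by the purely algebraic one ``there is an inner product on $\ngo$ for which {\rm (\ref{nilsoint})} holds.'' By the dictionary recalled in Section \ref{git}, an inner product satisfying {\rm (\ref{nilsoint})} exists exactly when $\mu_\sca$ is, up to scaling, a minimal vector for the $\Sl_m\times\Sl_2$-action on $V_{2,m}$, and (Kempf--Ness) this occurs if and only if the orbit $(\Sl_m\times\Sl_2)\cdot\mu_\sca$ is closed. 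Thus it suffices to prove that this orbit is closed if and only if $k_1=\dots=k_r=1$.

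Second, I would encode the pencil through the operator $A:=J_1^{-1}J_2$, which is well defined because nonsingularity forces $J_\mu(Z)$ to be invertible for every $Z\neq0$. Since $J_2=J_1A$ is skew and $J_1^{t}=-J_1$, one gets $A^{t}J_1=J_1A$, so $A$ is symmetric with respect to the symplectic form $J_1$; moreover $xJ_1+yJ_2=J_1(xI+yA)$ gives $\det(xI+yA)=\det(J_1)^{-1}f_\mu(x,y)^2$, so every eigenvalue of $A$ has even algebraic multiplicity. Positivity of $f_\mu$ (nonsingularity) says that all eigenvalues are non-real, and the normal form of Section \ref{2m} identifies the invariants of $\sca$ with the spectral data of $A$: the $\alpha_i$ are the eigenvalues in the upper half-plane (on which $\Gl_2$ acts by the M\"obius transformations induced on the two variables of $f_\mu$), while each $k_i$ records the Jordan (elementary-divisor) structure of $A$ at $\alpha_i$. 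Hence $k_1=\dots=k_r=1$ is precisely the semisimplicity of $A$, and the theorem becomes: the orbit is closed iff $A$ is semisimple.

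For the ``only if'' direction I would show that a nontrivial Jordan block produces a degeneration. Writing the additive Jordan decomposition $A=S+N$ with $S$ semisimple, $N\neq0$ nilpotent and $[S,N]=0$, the semisimple part defines a nonsingular algebra $\mu_S$ with the same eigenvalues $\alpha_i$ but all multiplicities equal to $1$. In a Jordan basis adapted to $J_1$ I would exhibit an explicit one-parameter subgroup $g_t\in\Sl_m$ (a traceless diagonal scaling of the Jordan strings, combined if necessary with a compensating element of $\Sl_2$) for which $g_tAg_t^{-1}\to S$ and $\lim_{t\to0}g_t\cdot\mu_\sca=\mu_S$. Since the Jordan type of $A$ is an isomorphism invariant encoded by the $k_i$, the limit $\mu_S$ is not isomorphic to $\mu_\sca$; it lies in the orbit closure but not in the orbit, so the orbit is not closed and no nilsoliton exists.

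For the ``if'' direction I would produce an inner product realizing {\rm (\ref{nilsoint})} directly. When $A$ is semisimple, the even-multiplicity property lets me split $\ngo_1$ into an $A$-invariant, pairwise orthogonal sum of $4$-dimensional blocks $V_i$, on each of which $A$ acts as two copies of the conformal rotation attached to $\alpha_i=\rho_ie^{\im\theta_i}$. On each block $(J_1|_{V_i},J_2|_{V_i})$ can be put in an $H$-type-like normal form, after which I retain the freedom to rescale the inner product blockwise by positive constants $c_i$ and to act on $\ngo_2$ by $\SO(2)$ and scaling. I would choose the $c_i$ so that the sum of the blockwise $J_1^2+J_2^2$ is a single multiple $aI$ of the identity, and then use the two-dimensional freedom in $\ngo_2$ to arrange $\tr J_1J_2=0$ and $\tr J_1^2=\tr J_2^2$, giving {\rm (\ref{nilsoint})}. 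I expect this last step to be the main obstacle: one must check that the blockwise scalings $c_i$ together with the $\Gl_2$-freedom are exactly enough to make $J_1^2+J_2^2$ proportional to the identity on every block while simultaneously enforcing the two trace normalizations. A secondary technical point is the necessity part, where one has to keep $g_t$ inside $\Sl_m\times\Sl_2$ and verify that the limit of $\mu_\sca$ is precisely the semisimplification $\mu_S$ and not some singular or otherwise distinct pencil.
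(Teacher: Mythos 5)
Your reduction to closedness of the $\Sl_m\times\Sl_2$-orbit via Theorem \ref{nilthm} is exactly the paper's first step, and your ``only if'' direction is essentially the paper's Lemma \ref{deg} and Corollary \ref{degcor} in different clothing: the paper degenerates $\mu_{\sca}$ to its ``semisimplification'' by an explicit traceless diagonal one-parameter subgroup that kills the off-diagonal $x$'s coupling consecutive $4\times4$ blocks, and distinguishes limit from orbit by the pencil invariants (the paper uses $\dim\Der$ elsewhere, but the elementary-divisor argument you give is fine since the $k_i$ classify $\mu_\sca$ up to isomorphism). Two small cautions there: the eigenvalues of $A=J_1^{-1}J_2$ are M\"obius images of the $\alpha_i$ (roots of $f_\mu(\lambda,-1)$, by \eqref{PfmuS}), not the $\alpha_i$ themselves, and one must check the limit pencil is again regular; both are cosmetic.

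The genuine gap is in your ``if'' direction, and it is precisely the step you flag. On the block for $\alpha_j=a_j+\im b_j$ one has $J_\mu(Z)^2=-q_j(Z)I_4$ with $q_j(x,y)=b_j^2x^2+(y-a_jx)^2$, so after rescaling block $j$ by $c_j>0$ and acting on $\ngo_2$ by $\vp\in\Gl_2$, condition \eqref{nilsoint} becomes the pair of requirements $c_j\tr(\vp Q_j\vp^t)=-a$ for all $j$ \emph{and} $\sum_j c_j\,\vp Q_j\vp^t=-\tfrac{b}{4}I_2$, where $Q_j$ is the matrix of $q_j$. These are coupled: choosing the $c_j$ first and then adjusting $\vp$ destroys the first normalization, so the ``then use the two-dimensional freedom in $\ngo_2$'' step does not go through as an independent second move. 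Eliminating the $c_j$ leaves the fixed-point problem $\sum_j \vp Q_j\vp^t/\tr(\vp Q_j\vp^t)=\tfrac{r}{2}I_2$, whose solvability is not obvious and is not addressed; it can be proved (e.g.\ by minimizing $\vp\mapsto\sum_j\log\tr(\vp Q_j\vp^t)$ on $\Sl_2/\SO(2)$, which is proper and geodesically convex because each $Q_j>0$), but some such argument must be supplied. The paper avoids this entirely: it shows the orbit is closed abstractly, using that the $\Sl_2$-orbit of any \emph{positive} binary form is closed (Example \ref{binary} and Lemma \ref{possemi}) to pin down the Pfaffian of any limit $\lambda$, concluding $\lambda\cong\mu_{\sca'}$ for a coarsening $\sca'$ of $\sca$, and then using Corollary \ref{degcor} to get $\mu_\sca\in\overline{\Sl_m\times\Sl_2\cdot\lambda}$ and hence equality of orbits. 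If you do not want to import the real Kempf--Ness machinery twice, I would recommend either completing the convexity argument above or switching to the paper's orbit-closure route for this half.
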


We use this result to obtain $k$-parameter families of nonsingular algebras of type $(2,m)$ which do not admit nilsoliton metrics, for any $k\geq 1$.  For the type $(2,8)$, $r\geq 2$, the curve  $\mu_t=\mu_{\sca_t}$, where $\sca_t:=\{(\im,1),(\sqrt{t}\im,1)\}$, $t\geq 1$, is pairwise non-isomorphic and consists of nonsingular algebras which do admit a nilsoliton.  Their Pfaffian forms equal $f_{\mu_t}(x,y)=(x^2+y^2)(x^2+ty^2)$.  On the other hand, notice that $\mu_\sca$ with $\sca:=\{(\im,2)\}$ has identical Pfaffian than $\mu_1$, given by $(x^2+y^2)^2$, but $\mu_\sca$ does not admit a nilsoliton.  We also use Nikolayevsky's classification to obtain curves of $2$-step algebras of type $(2,m)$ (not necessarily nonsingular) which do not admit nilsoliton metrics.  Our examples cover any dimension $\geq 14$ and dimension $12$ (compare with \cite{Jbl,Pyn,Osc1}).

In Section \ref{38}, we give many curves of nonsingular algebras of type $(3,8)$ admitting a nilsoliton metric.  We have also found the first examples of nonsingular algebras of type $(3,8)$ which do not admit nilsolitons.  They consist of two curves having identical Pfaffian, but with different dimensions for their respective automorphism groups.

Let us now consider question (iii) above.  It has recently been proved by A. Kaplan and A. Tiraboschi in \cite{KplTrb} that for any nonsingular algebra $\mu$, the projection of the automorphism group $\Aut(\mu)$ on $\glg_n=\glg(\ngo_2)$ satisfies that
\begin{equation}\label{maxint}
\dim{\Aut(\mu)|_{\ngo_2}}\leq 1+\frac{n(n-1)}{2},
\end{equation}
by showing the existence of an $\Aut(\mu)$-invariant (up to scaling) inner product on $\ngo_2$ (which generalizes L. Saal's result in the case of $H$-type algebras, see \cite{Sal}).  It is well known that equality holds for $H$-type algebras (see \cite{Rhm}), and it was believed for a long time that this might be a way to characterize $H$-type algebras among nonsingular ones.  In Section \ref{aut}, we show that this is not true by proving that the nonsingular algebra $\mu_{\sca}$ of type $(2,4k)$, where $\sca=\{(\im,k)\}$, which is of $H$-type if and only if $k=1$, satisfies the equality in condition (\ref{maxint}) for any $k\in\NN$ (we have recently become aware that this result was independently obtained in \cite{KplTrb}).

On the other hand, it is natural to consider the stronger condition
\begin{equation}\label{maxort}
\dim{K(\mu)|_{\ngo_2}}=\frac{n(n-1)}{2}, \qquad\mbox{or equivalently}, \qquad \SO(n)\subset K(\mu)|_{\ngo_2},
\end{equation}
where $K(\mu)$ denotes the subgroup of $\Aut(\mu)$ of orthogonal automorphisms.  Clearly, this also holds for $H$-type algebras, and it is claimed in \cite[Theorem 2.5]{KplTrb} that $H$-type algebras are the only nonsingular algebras for which (\ref{maxort}) holds.  However, we give many examples of nonsingular algebras in each type $(3,4k)$, $k\geq 2$, satisfying (\ref{maxort}), none of which is $H$-type.  The simplest one is of type $(3,8)$ and is obtained by taking $J_\mu(\ngo_2)\subset\sog(8)$ to be any Lie subalgebra of $\sog(8)$ isomorphic to $\sug(2)$ and acting irreducibly on $\RR^8$.

\vs \noindent {\it Acknowledgements.}  This research is part of the Ph.D. thesis (Universidad Nacional de C\'ordoba) by the second author, under the supervision of the first one (see \cite{Oscth}).  We are very grateful to F. Cukierman,  M. Jablonski  and W. Ziller for fruitful discussions on the topic of this paper, and to the referee for several helpful comments and suggestions.

\section{Invariants of nonsingular algebras}\label{inv}

We consider a real vector space $\ngo$ and fix a direct sum decomposition
$$
\ngo=\ngo_1\oplus\ngo_2, \qquad \dim{\ngo_1}=m, \qquad\dim{\ngo_2}=n.
$$
Every
$2$-step nilpotent Lie algebra of dimension $m+n$ with derived algebra of dimension $\leq n$ can be represented by a bilinear skew-symmetric map
$$
\mu:\ngo_1\times\ngo_1\longrightarrow\ngo_2.
$$
The set of all such maps is the vector space $V_{n,m}:=\Lambda^2\ngo_1^*\otimes\ngo_2$ of dimension $n\binom{m}{2}$, and an element $\mu\in V_{n,m}$ is said to be of {\it type} $(n,m)$ if $\mu(\ngo_1,\ngo_1)=\ngo_2$.  We note that these elements form an open and dense subset of $V_{n,m}$, and two of them are isomorphic as Lie algebras if and only if they lie in the same $\Gl_m\times\Gl_n$-orbit with respect to the natural action (see \eqref{actiong} for the corresponding action of the Lie algebra):
\begin{equation}\label{actionG}
(\psi,\vp)\cdot\mu := \vp\mu(\psi^{-1}\cdot,\psi^{-1}\cdot), \qquad (\psi,\vp)\in\Gl_m\times\Gl_n, \quad\mu\in V_{n,m}.
\end{equation}
Here and throughout the rest of the paper, $\Gl_n$ will denote the group of invertible linear operators of an $n$-dimensional vector space, and also the real general linear group $\Gl_n(\RR)$ if there is a fixed basis in the context.

If we fix bases $\{ X_1,\dots ,X_m\}$ and $\{ Z_1,\dots ,Z_n\}$ of $\ngo_1$ and $\ngo_2$, respectively, then each $\mu\in V_{n,m}$ is determined by its structural constants $\mu_{ij}^k\in\RR$ defined by
$$
\mu(X_i,X_j)=\sum_k\mu_{ij}^k Z_k.
$$
An alternative way to arrange the structural constants of $\mu$ is by fixing the inner product $\ip$ on $\ngo$ that makes the above basis orthonormal in order to define $J_\mu:\ngo_2\longrightarrow\sog(\ngo_1)$ by
$$
\la J_\mu(Z)X,Y\ra = \la\mu(X,Y),Z\ra, \qquad \forall X,Y\in\ngo_1, \; Z\in\ngo_2.
$$
Indeed, the $ij$-entry of the matrix of $J_\mu(Z_k)$ is precisely given by $-\mu_{ij}^k$.

\begin{example}\label{444}
The $11$-dimensional $2$-step algebra $\mu\in V_{3,8}$ defined by
$$
\begin{array}{lll}
\mu(X_1,X_3)=-Z_1, & \mu(X_1,X_5)=-Z_2, & \mu(X_1,X_7)=-2Z_2+\sqrt{2}Z_3, \\
\mu(X_2,X_4)=-Z_1, & \mu(X_2,X_6)=-Z_2, & \mu(X_2,X_8)=-Z_2+\frac{1}{\sqrt{2}}Z_3, \\
\mu(X_3,X_6)=-Z_3, & \mu(X_3,X_8)=-\frac{1}{\sqrt{2}}Z_2, & \mu(X_4,X_6)=-Z_3, \\
\mu(X_4,X_8)=-\frac{1}{\sqrt{2}}Z_2, & \mu(X_5,X_8)=-Z_1, & \mu(X_6,X_7)=-Z_1,
\end{array}
$$
can be represented by its structural map $J_\mu$ given by
$$
J_\mu(xZ_1+yZ_2+zZ_3)=\left[ \begin {smallmatrix}
0&0&x&0&y&0&2y-\sqrt {2}z&0\\
0&0&0&x&0&y&0&y+\frac{1}{\sqrt{2}}z\\
-x&0&0&0&z&0&\sqrt{2}y&0\\
0&-x&0&0&0&z&0&-\frac{1}{\sqrt{2}}y\\
-y&0&-z&0&0&0&0&x\\
0&-y&0&-z&0&0&x&0\\
-2y+\sqrt{2}z&0&-\sqrt{2}y&0&0&-x&0&0\\
0&-y-\frac{1}{\sqrt{2}}z&0&\frac{1}{\sqrt{2}}y&-x&0&0&0
\end{smallmatrix} \right].
$$
\end{example}

We focus in this paper on the following special class of algebras.

\begin{definition}\label{ns}
A $2$-step algebra $(\ngo=\ngo_1\oplus\ngo_2,\mu)$, $\mu\in V_{n,m}$, is called {\it nonsingular} if any of the following conditions holds:
\begin{itemize}
\item The map $\ngo_1\longrightarrow\ngo_2$, $Y\mapsto\mu(X,Y)$ is onto for any nonzero $X\in\ngo_1$.

\item For each nonzero $Z\in\ngo_2$, the map $(X,Y)\mapsto\la\mu(X,Y),Z\ra$ is a non-degenerate $2$-form on $\ngo_1$.

\item $J_\mu(Z)$ is invertible for any nonzero $Z\in\ngo_2$.
\end{itemize}
\end{definition}

It is easy to see that these conditions are indeed equivalent and do not depend on the chosen inner product.  Note that $m$ must be even for nonsingularity to hold.  It follows from the strong restrictions for the maximal number of linearly independent vector fields on spheres that nonsingular algebras can only exist in very special dimensions.  Indeed, if $m=(2a+1)2^{4b+c}$, where $0\leq c\leq 3$, then $n\leq 2^c+8b-1$ (see e.g. \cite{LwsMch}).  In particular, for $n\leq 11$ one has
$$
\begin{array}{c|c|c|c|c|c|c}
n & 1 & 2-3 & 4-7 & 8 & 9 & 10-11 \\ \hline
m & 2k & 4k & 8k & 16k & 32k & 64k
\end{array}, \qquad k\in\NN.
$$
The subset $V^+_{n,m}\subset V_{n,m}$ of nonsingular algebras is clearly open in $V_{n,m}$, although it is not always dense.  For instance, $V^+_{2,4}$ consists of a single $\Gl_4\times\Gl_2$-orbit of dimension $12$, the one corresponding to the (real) H-type algebra $\hg_3\otimes\CC$ (where $\hg_3$ denotes the $3$-dimensional real Heisenberg algebra), but the orbit of $\hg_3\oplus\hg_3$ is also $12$-dimensional (and open) and so $V^+_{2,4}$ is not dense in $V_{2,4}$.

There is a great lack of known invariants to distinguish isomorphism classes in $V_{n,m}$, even for nonsingular elements.  A complete classification is only known for the types $(1,m)$ (all Heisenberg algebras up to abelian factors), $(2,m)$ (see \cite{Ggr} or Section \ref{2m}), $(5,5)$ and $(n,m)$ with $n+m\leq 9$ (see \cite{GltTms}).  By an invariant we mean a function from $V_{n,m}$ to some set $X$ which is constant on $\Gl_m\times\Gl_n$-orbits and therefore provides a necessary condition on two algebras to be isomorphic; namely, the values of the invariant at two isomorphic algebras must coincide.

We give in the next subsections a number of different approaches to get invariants of $2$-step algebras.

\subsection{Pfaffian form}\label{pf}
We associate to each $\mu\in V_{n,m}$ its {\it Pfaffian form} $f_{\mu}$ defined by
$$
f_{\mu}(x_1,\dots ,x_n)=\Pf\left(J_\mu({x_1Z_1+\dots +x_nZ_n})\right),
$$
where $\Pf:\sog(\ngo_1)\longrightarrow \RR$ is the usual Pfaffian, that is, the only polynomial function satisfying $\Pf(B)^2=\det{B}$ for all $B\in\sog(\ngo_1)$ and $\Pf(J)=1$ for some fixed $J\in\sog(\ngo_1)$ having only $\pm\im$ as eigenvalues (see e.g. \cite[10.3]{Muk} for more information on Pfaffians of skew-symmetric matrices).

\begin{remark}
An alternative way to define $f_\mu$ is as a polynomial function on $\ngo_2$: $f_\mu(Z)=\Pf\left(J_\mu(Z)\right)$.  If we change the basis in the definition we gave above by $\{ gZ_1,\dots,gZ_n\}$, $g\in\Gl_n$, then we obtain the polynomial $f_\mu(g^{-1}(x_1,\dots,x_n))$.
\end{remark}

Recall that $J_\mu(Z)$ is an $m\times m$ matrix, and since
$$
f_\mu(Z)^2=\det{J_\mu(Z)},
$$
we need $m$ to be even in order
to get $f_\mu\ne 0$.

We therefore assume that $m$ is even, say $m=2d$, and thus the Pfaffian determines a continuous function
\begin{equation}\label{pff}
f:V_{n,2d}\longrightarrow P_{n,d}, \qquad \mu\mapsto f_\mu,
\end{equation}
where $P_{n,d}:=P_{n,d}(\RR)$ is the algebra of all homogeneous polynomials of degree $d$ in $n$ variables with coefficients in $\RR$ (sometimes called
$n$-{\it ary} $d$-{\it ic forms} for short).  There is a natural left $\Gl_n$-action on $P_{n,d}$ given by $\vp\cdot f:= f\circ\vp^{-1}$.

\begin{example}\label{444-pf}
By a straightforward computation one gets that the Pfaffian form of the algebra in Example \ref{444} is given by
$$
f_\mu(x,y,z)=x^4+y^4+z^4,
$$
from which it follows that $\mu$ is nonsingular as $f_\mu$ only vanishes at zero (see the third condition in Definition \ref{ns}).
\end{example}

It is proved in \cite{Sch} that the projective equivalence class of the form
$f_\mu(x_1,\dots ,x_n)$ is an isomorphism invariant of the Lie algebra $(\ngo,\mu)$ (see also \cite[Proposition 2.4]{ratform}).  More precisely, if $\mu,\lambda\in V_{n,m}$ are isomorphic, then $f_\mu$ and $f_\lambda$ are {\it projectively equivalent} (denoted by $f_\mu\simeq f_\lambda$), i.e.
there exist $\vp\in\Gl_n$ and $c\ne 0$ such that
$$
f_\lambda(x_1,\dots ,x_n)=cf_\mu(\vp(x_1,\dots ,x_k)), \qquad \forall (x_1,\dots ,x_n)\in\RR^n.
$$
Indeed, it is easy to see that if $\lambda=(\psi,\vp)\cdot\mu$ for some $(\psi,\vp)\in\Gl_{2d}\times\Gl_n$, then $J_\lambda(Z)=(\psi^{-1})^tJ_\mu(\vp^tZ)\psi^{-1}$ for all $Z$ and thus $f_\lambda=(\det{\psi})^{-1}\, f_\mu\circ\vp^t$.

\begin{example}\label{su2}
The Pfaffian form of the algebra $\mu\in V_{3,8}$ defined by
$$
J_\mu(xZ_1+yZ_2+zZ_3)=\left[
    \begin{smallmatrix}
      0          &     3x       & -\sqrt{3}y  & -\sqrt{3}z &             &             &  &  \\
      -3x        &     0       & \sqrt{3}z    & -\sqrt{3}y &             &             &  &  \\
       \sqrt{3}y & -\sqrt{3}z  & 0           & x           & -2y         & -2z         &  &  \\
       \sqrt{3}z &   \sqrt{3}y & -x          & 0           & 2z          & -2y         &  &  \\
                 &             &  2y         &  -2z        & 0           & -x          & -\sqrt{3}y & -\sqrt{3}z \\
                 &              & 2z         &  2y         & x           & 0           & \sqrt{3}z & -\sqrt{3}y \\
                 &              &            &             & \sqrt{3}y   & -\sqrt{3}z  & 0 & -3x \\
                 &              &            &             &  \sqrt{3}z  & \sqrt{3}y   & 3x & 0 \\
    \end{smallmatrix}
  \right]
$$
is given by $f_\mu(x,y,z)=9(x^2+y^2+z^2)^2$, and consequently, $\mu$ is also nonsingular and non-isomorphic to the algebra from Examples \ref{444}-\ref{444-pf}.  There are two easy ways to see that these two ternary quartics are not projectively equivalent: by computing their isotropy subgroups in $\Gl_3$, and by looking at their level sets $f_\mu=c$ in $\RR^3$.
\end{example}

One can therefore use invariants of forms to distinguish algebras in $V_{n,m}$ up to isomorphism.  Consider the ring $\RR\left[P_{n,d}\right]^{\Sl_n}$ of $\Sl_n$-invariant polynomials on $P_{n,d}$, that is, the set of all polynomial functions $I:P_{n,d}\longrightarrow\RR$ such that
$$
I(\vp\cdot f)=I(f) \qquad\forall\vp\in\Sl_n,\quad f\in P_{n,d}.
$$
It is well known that $I$ is invariant if and only if each of its homogeneous components are so.  However, a set of generators and their relations for
such a ring is only known for small values of $n$ and $d$, including $d=2$ and any $n$, $n=2$
and $d\leq 8$, $n=3$ and $d\leq 3$.  We refer to \cite{Dlg,Muk} for many explicit classification results on forms and invariants of forms.

\begin{lemma}\label{proyinv}
For $f,g\in P_{n,d}$, $n$ odd, assume that there exist homogeneous invariants $I,I'\in\RR\left[P_{n,d}\right]^{\Sl_n}$ of the same degree such that $I'(f),I'(g)\ne 0$ and
$$
\frac{I(f)}{I'(f)}\not= \frac{I(g)}{I'(g)}.
$$
Then $f$ is not projectively equivalent to $g$.
\end{lemma}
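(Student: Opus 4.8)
The plan is to argue by contraposition: assuming $f\simeq g$, I would show that the two ratios $I(f)/I'(f)$ and $I(g)/I'(g)$ necessarily coincide, contradicting the hypothesis. Recall that projective equivalence, phrased in terms of the $\Gl_n$-action $\rho\cdot h := h\circ\rho^{-1}$ on $P_{n,d}$, means $g = c\,(\rho\cdot f)$ for some $\rho\in\Gl_n$ and $c\neq 0$. The whole point of forming the quotient $I/I'$ is that a single invariant $I$ is not a projective invariant — it gets multiplied by a nonzero scalar when $f$ is rescaled or moved by an element of $\Gl_n\smallsetminus\Sl_n$ — whereas the ratio of two invariants of the same degree cancels that scalar.

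The key step is to compute how a homogeneous $\Sl_n$-invariant $I$ of degree $k$ transforms under all of $\Gl_n$, and this is exactly where the hypothesis ``$n$ odd'' is used. Since $n$ is odd, $x\mapsto x^n$ is a bijection of $\RR$, so every $\rho\in\Gl_n$ factors as $\rho = \lambda\,s$ with $\lambda := (\det\rho)^{1/n}\in\RR^*$ the unique real $n$-th root and $s := \lambda^{-1}\rho\in\Sl_n$; equivalently $\Gl_n=\Sl_n\cdot\RR^* I_n$ precisely for $n$ odd (for $n$ even the scalars reach only $\det>0$, which is the source of a potential sign ambiguity that the oddness of $n$ removes). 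Because $f\in P_{n,d}$ is homogeneous of degree $d$, the scalar factor acts by $(\lambda I_n)\cdot f=\lambda^{-d}f$, hence $\rho\cdot f=\lambda^{-d}(s\cdot f)$; applying $I$ and using its degree-$k$ homogeneity together with its $\Sl_n$-invariance gives
$$
I(\rho\cdot f)=\lambda^{-dk}\,I(s\cdot f)=\lambda^{-dk}\,I(f),\qquad \lambda=(\det\rho)^{1/n}.
$$
The crucial feature is that the factor $\lambda^{-dk}$ depends only on the common degree $k$ and on $\rho$, not on which invariant is used.

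It then remains to assemble the pieces. Applying the displayed transformation law together with $I(c\,h)=c^k I(h)$ to $g=c\,(\rho\cdot f)$, for both $I$ and $I'$, yields
$$
I(g)=c^k\lambda^{-dk}\,I(f),\qquad I'(g)=c^k\lambda^{-dk}\,I'(f),
$$
with the same common factor $c^k\lambda^{-dk}$. Since $I'(f)\neq0$ and $I'(g)\neq0$, dividing these identities cancels $c^k\lambda^{-dk}$ and produces $I(g)/I'(g)=I(f)/I'(f)$, contradicting the hypothesis; therefore $f$ and $g$ are not projectively equivalent. I expect the only delicate point to be the transformation law of the previous paragraph — in particular, verifying that for $n$ odd the real root $(\det\rho)^{1/n}$ is unambiguous, so that no sign discrepancy between $I$ and $I'$ can occur — while the remaining manipulations are a routine unwinding of the homogeneity and invariance definitions.
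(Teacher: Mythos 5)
Your proof is correct and follows essentially the same route as the paper, which likewise reduces everything to the two facts that $I(c\vp\cdot f)=c^kI(f)$ for $c\in\RR$, $\vp\in\Sl_n$, and that $\Gl_n=\RR^*\Sl_n$ when $n$ is odd. Your transformation law $I(\rho\cdot f)=(\det\rho)^{-dk/n}I(f)$ and the cancellation of the common factor in the ratio are exactly the computation the paper leaves to the reader.
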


The proof follows easily by using the fact that $I(c\vp\cdot f)=c^kI(f)$ for any $c\in\RR$, $\vp\in\Sl_n$, $f\in P_{n,d}$, where $k$ is the degree of the invariant $I$, and the fact that for $n$ odd one has $\Gl_n=\RR^*\Sl_n$.

\begin{example}\label{exinvform}
Suppose we have a one-parameter family $\mu_t\in V_{3,8}$ of $2$-step algebras with Pfaffian forms
$$
f_{\mu_t}(x,y,z)=(x^2+y^2+z^2)^2+tx^2y^2 \in P_{3,4}.
$$
It follows that $\mu_t$ is nonsingular for any $t\geq 0$ (recall that $\det{J_{\mu_t}(Z)}=f_{\mu_t}(Z)^2$), but the question is: is this really a `curve' of algebras, in the sense that $\mu_t$ is not isomorphic to $\mu_s$ for all $t\ne s$?  If we write any ternary quartic $f\in P_{3,4}$ as
\begin{align}
f(x,y,z) = & ax^4+4bx^3y+6cx^2y^2+4dxy^3+ey^4 +4fx^3z+12gx^2yz+12hxy^2z \label{par-34}\\
& +4iy^3z+6jx^2z^2+12kxyz^2+6ly^2z^2+4mxz^3+4nyz^3+pz^4, \notag
\end{align}
then from \cite{Dxm} we get the following $\Sl_3$-invariant homogeneous polynomials on $P_{3,4}$ of degree $3$ and $6$, respectively:
\begin{align*}
I_3(f) := & aep+3(al^2+ej^2+pc^2)+4(bim+fdn) \\
& -4(ain+efm+pbd)+6cjl+12(ck^2+jh^2+lg^2)-12ghk  \\
& -12(bkl+fhl+dkj+igj+mhc+ngc)+12(gdm+hnb+kfi),
\end{align*}
and
\begin{equation}\label{calecticant}
I_6(f):=\det{H(f)}, \qquad\mbox{where} \quad H(f):=\left|
                               \begin{array}{cccccc}
                                 a & c & j & g & f & b \\
                                 c & e & l & i & h & d \\
                                 j & l & p & n & m & k \\
                                 g & i & n & l & k & h \\
                                 f & h & m & k & j & g \\
                                 b & d & k & h & g & c \\
                               \end{array}
                             \right|.
\end{equation}
The invariant $I_6(f)$ is called the {\it calecticant} and $H(f)$ the {\it Hankel} matrix (or quadratic form) of $f$.
There are other five invariants described in \cite{Dxm} of degree $9$, $12$, $15$, $18$ and $27$, respectively, which are more difficult to handle.  Anyway, by a straightforward computation, we get that
$$
I_6\left( f_{\mu_t}\right)=-\tfrac{1}{1944}t^3-\tfrac{7}{2916}t^2+\tfrac{8}{729}t+\tfrac{20}{729}, \qquad
I_3\left( f_{\mu_t}\right)=\tfrac{1}{12}t^2+\tfrac{4}{9}t+\tfrac{20}{9},
$$
and so
$$
\frac{I_6(f_{\mu_t})}{I_3(f_{\mu_t})^2}= -\frac{2}{9}\frac{(3t^3+14t^2-64t-160)}{(3t^2+16t+80)^3},
$$
which is easily seen to be an injective function for $t\in [12,\infty)$.  Thus $f_{\mu_t}$, $t\geq 12$, belong to pairwise different projective equivalence classes by Lemma \ref{proyinv}, which implies that $\mu_t$, $t\geq 12$, represents a family of pairwise non-isomorphic nonsingular algebras of type $(3,8)$.  Other intervals for $t$ giving rise to the same conclusion are $(-\infty,-4]$, $[-4,0]$ and $[0,12]$.
\end{example}

\subsection{Positive polynomials}\label{pp}
The following elementary but still intriguing fact follows from the third condition in Definition \ref{ns},
\begin{quote}
$\mu\in V_{n,2d}$ is nonsingular if and only if $f_\mu$ (or $-f_\mu$) is a {\it positive} polynomial, i.e. $f_\mu(x_1,\dots ,x_n)>0$ for any nonzero $(x_1,\dots ,x_n)\in\RR^n$.
\end{quote}
For instance, the $2$-step algebras of type $(3,8)$ given in Examples \ref{444}-\ref{444-pf} and \ref{su2} are both nonsingular.  Notice that one of the forms $\pm f\in P_{n,d}$ is positive if and only if $f$ vanishes only at $0\in\RR^n$, and so for the existence of a nonzero positive $d$-ic form, $d$ must be even.  The subset $P^+_{n,d}\subset P_{n,d}$ of positive forms is open, it is precisely the interior of the closed convex cone of all {\it nonnegative} (i.e. $f\geq 0$) $n$-ary $d$-ic forms (see e.g. \cite[Theorem 3.14]{Rzn1}).

In spite the theory of positive polynomials has a long and rich history in the literature, including Hilbert's 17th Problem (see \cite{Rzn2} for further information), a complete classification of them up to projective equivalence seems to be as difficult as the classification of all forms and thus also unknown and mostly intractable, even in low dimensional cases like ternary quartics (i.e. for the type $(n,m)=(3,8)$).

\begin{example}\label{quadratic}
There is only one positive quadratic form $f\in P_{n,2}$ up to projective equivalence; namely, $f(x_1,\dots,x_n)=x_1^2+\dots+x_n^2$ (use that any projective equivalence class contains an element of the form $\epsilon_1x_1^2+\dots+\epsilon_nx_n^2$ with $\epsilon_i\in\{ 0,\pm 1\}$).  The family $(x_1^2+\dots+x_n^2)^{d/2}$ provides examples for any even degree $d\geq 2$ of positive polynomials whose $\Sl_n$-orbit is closed and are however {\it singular} (i.e. $\tfrac{\partial}{\partial x_i}f=0$, $i=1,\dots,n$ has a nonzero solution in $\CC^n$) for $d\geq 4$.  Their $\Sl_n$-orbits are not of maximal dimension, as their isotropy subgroups all equal $\Or(n)$.
\end{example}

\begin{example}\label{binary}
A complex binary $d$-ic form $f\in P_{2,d}(\CC)$ can be identified (up to scaling) with its set of zeroes in $\PP^1=\PP\CC^2$ as follows:
$$
f(x,y)=\prod_{i=1}^{d} (a_iy-b_ix) \qquad \leftrightarrow\qquad \zca(f)=\left\{ (a_1:b_1),\dots,(a_d:b_d)\right\}\subset\PP^1.
$$
The $\Sl_2(\CC)$-action on the projective space of $P_{2,d}(\CC)$ is therefore equivalent to the action on subsets of unordered $d$ points in $\PP^1$ (counting multiplicities).  The following facts are well known in invariant theory (see e.g. \cite{Dlg,Muk}):
\begin{itemize}
\item $f$ is {\it semistable} (i.e. $0\notin \overline{\Sl_2(\CC)\cdot f}$) if and only if any element in $\zca(f)$ has multiplicity $\leq d/2$.

\item $f$ is {\it stable} (i.e. $\Sl_2(\CC)\cdot f$ is closed and the isotropy subgroup at $f$ is finite) if and only if any element in $\zca(f)$ has multiplicity $< d/2$.
\end{itemize}
It is easy to see that $f$ is real and positive if and only if $d$ is even and
$$
\zca(f)=\{ (1:\alpha_1), (1:\overline{\alpha_1}),\dots,  (1:\alpha_{d/2}), (1:\overline{\alpha_{d/2}})\}, \qquad\mbox{with}\quad \alpha_i\in\CC\setminus\RR.
$$
Positive binary forms are therefore all stable with the only exception of $(x^2+y^2)^{d/2}$, which still has a closed $\Sl_2(\CC)$-orbit.  The $\Sl_2(\CC)$-orbit of a real form $f$ is closed if and only if its $\Sl_2(\RR)$-orbit is so (see e.g. \cite[Proposition 2.3]{BrlHrs} and \cite[Corollary 5.3]{Brk}).  It follows that $\Sl_2(\RR)\cdot f$ is closed for any positive binary form $f$.
\end{example}

\begin{example}\label{ternquart}
It was proved by Hilbert \cite{Hlb} that any nonnegative ternary quartic, as well as any binary or quadratic form, is a sum of squares (Hilbert also showed that this does not hold in any other case).  It is also known that $f\in P_{3,4}$ (or $f$ a binary or quadratic form) is a sum of $4$-th powers of linear forms (and consequently $f\geq 0$) if and only if its Hankel matrix $H(f)$ (see (\ref{calecticant})) is positive semidefinite (see the second sentence after \cite[(5.25)]{Rzn1} and correct a typo by replacing the first $Q$ by $P$).  This is not longer true in the other cases.  However, it is proved in \cite{Ckr} that for any form $f\in P_{n,d}$, we have that if $f>0$ ($f\geq 0$) then $H(f)>0$ ($H(f)\geq 0$).
\end{example}

It is worth noticing at this point that positivity is a notion which only makes sense over $\RR$, and it is therefore difficult to study properties of positive forms from the point of view of invariant theory, a subject mostly attached to algebraically closed fields.  We have seen in Examples \ref{quadratic} and \ref{binary} that positive quadratic and binary forms have closed $\Sl_n$-orbits.  We do not know if this is also true for any form.  It certainly fails in the nonnegative case, e.g. $0\in\overline{\Sl_n\cdot x_1^d}$ for any $n\geq 2$ (and $d$ even).   We now prove that a positive form is at least always semistable.

\begin{lemma}\label{possemi}
If $f\in P_{n,d}$ is positive, then $0\notin\overline{\Sl_n\cdot f}$.
\end{lemma}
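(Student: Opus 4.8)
The plan is to produce a function on the orbit $\Sl_n\cdot f$ that is constant along the orbit, takes a finite nonzero value, yet would have to blow up at the origin; this will force $0\notin\overline{\Sl_n\cdot f}$. Concretely, I would consider the integral
\[
\Phi(g):=\int_{\RR^n} e^{-(g\cdot f)(x)}\,dx, \qquad g\in\Sl_n .
\]

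First I would verify that $\Phi$ is well defined and $\Sl_n$-invariant. Since $f>0$ on the compact unit sphere, its minimum there is some $c>0$, and homogeneity gives $f(x)\ge c\|x\|^d$; hence $e^{-f(x)}\le e^{-c\|x\|^d}$ is integrable and $C:=\int_{\RR^n}e^{-f}\,dx\in(0,\infty)$. The same estimate applies to $g\cdot f=f\circ g^{-1}$, which is again positive for $g\in\Sl_n$, so $\Phi(g)<\infty$ for every $g$. The substitution $u=g^{-1}x$, together with $|\det g|=1$, then yields $\Phi(g)=\int_{\RR^n}e^{-f(u)}\,du=C$ for all $g\in\Sl_n$; that is, $\Phi$ is constant on the orbit.

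The key step is the contradiction. Suppose $0\in\overline{\Sl_n\cdot f}$ and choose $g_k\in\Sl_n$ with $g_k\cdot f\to 0$ in $P_{n,d}$. Convergence in the finite-dimensional space $P_{n,d}$ is convergence of coefficients, so $(g_k\cdot f)(x)\to 0$ for each fixed $x\in\RR^n$, whence $e^{-(g_k\cdot f)(x)}\to 1$ pointwise. Applying Fatou's lemma to the nonnegative functions $e^{-(g_k\cdot f)}$ then gives
\[
\infty=\int_{\RR^n}1\,dx\le\liminf_k\int_{\RR^n}e^{-(g_k\cdot f)}\,dx=\liminf_k\Phi(g_k)=C<\infty,
\]
a contradiction. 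Hence $0\notin\overline{\Sl_n\cdot f}$.

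I expect the only subtlety to be checking that the orbit closure in question is the Euclidean one, so that $g_k\cdot f\to 0$ is genuine coefficientwise convergence, and confirming the integrability bound uniformly enough to justify Fatou; both are routine once $f\ge c\|\cdot\|^d$ is established. As an alternative, purely combinatorial route, one could instead invoke the real Hilbert--Mumford criterion: $0\in\overline{\Sl_n\cdot f}$ would produce a nontrivial one-parameter subgroup $\lambda(t)=\Diag(t^{a_1},\dots,t^{a_n})$ with $\sum a_i=0$ and $\langle a,\alpha\rangle<0$ for every exponent $\alpha$ in the support of $f$; evaluating $f$ along $x(t)=(t^{a_1},\dots,t^{a_n})$ and normalizing to the sphere would then force $f$ to take values arbitrarily close to $0$ on the unit sphere, contradicting positivity. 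The main obstacle in that second approach is setting up the real version of Hilbert--Mumford, which the analytic argument above sidesteps entirely.
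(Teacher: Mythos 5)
Your proof is correct, but it takes a genuinely different route from the paper's. The paper also starts from the estimate $f(x)\ge m\|x\|^d$ with $m=\min(f|_S)>0$, but then uses it directly: if $f\circ\vp_k\to 0$ with $\vp_k\in\Sl_n$, then $m\|\vp_k\|^d\le\max(f\circ\vp_k|_S)\to 0$, forcing $\vp_k\to 0$ and contradicting $\det\vp_k=1$. You instead use the bound only to guarantee integrability of $e^{-f}$, and derive the contradiction from the $\Sl_n$-invariant functional $\Phi(g)=\int_{\RR^n}e^{-(g\cdot f)}\,dx$ together with Fatou's lemma. Both arguments are complete and hinge on the same consequence of positivity plus homogeneity; the paper's is shorter and purely linear-algebraic, while yours constructs a continuous $\Sl_n$-invariant function on the cone of positive forms that blows up as one approaches $0$. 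That invariant is a classical and more powerful device: it is potentially relevant to the stronger question, left open in Section 2.2 of the paper, of whether every positive form has a \emph{closed} $\Sl_n$-orbit, which the norm-estimate argument says nothing about. Your concluding worries are minor: the closure in the statement is indeed the Euclidean (vector space) closure, evaluation at a point is a continuous linear functional on the finite-dimensional space $P_{n,d}$, and Fatou needs only nonnegativity, so no uniform integrability is required. The Hilbert--Mumford sketch you append is not needed and, as stated, would require more care (closedness of the orbit of $0$ and the real version of the criterion), so the integral argument is the right one to keep.
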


\begin{proof}
Assume that $0\in\overline{\Sl_n\cdot f}$, that is, there exists a sequence $\vp_k\in\Sl_n$ such that $f\circ\vp_k\to 0$, as $k\to\infty$.  If $S\subset\RR^n$ is the sphere of radius $1$ and $m:=\min(f|_S)$, which is a positive number due to the positivity of $f$, then
$$
m\|\vp_k(Z)\|^d\leq f(\vp_k(Z)), \qquad \forall Z\in\ngo_2.
$$
This implies that
$$
m\|\vp_k\|^d \leq\max(f\circ\vp_k|_S)\to 0,
$$
as $k\to\infty$, and thus $\vp_k\to 0$, which is a contradiction since $\det{\vp_k}=1$ for all $k$.  This concludes the proof of the lemma.
\end{proof}

\subsection{Real geometric invariant theory}\label{git}
Let $G$ be a real reductive group acting linearly on a finite dimensional real
vector space $V$. The precise definition of
our setting is the one considered in \cite{RchSld}, or the more general one in \cite{HnzSchStt} (see also \cite{EbrJbl}),
where many results from geometric invariant theory are adapted and proved over $\RR$.

The derivative of the
above action defines a representation of the Lie algebra $\ggo$ of $G$ in $V$, which will be denoted by $\pi:\ggo\longrightarrow\End(V)$. We consider
a Cartan decomposition $\ggo=\kg\oplus\pg$, where $\kg$ is the Lie algebra of a maximal compact subgroup
$K$ of $G$. Endow $V$ with a fixed from now on $K$-invariant inner product $\ip$
such that $\pg$ acts on $V$ by symmetric operators, and endow $\pg$ with an
$\Ad(K)$-invariant inner product also denoted by $\ip$.

The function $m:V\longrightarrow\pg$ implicitly defined by
$$
\la m(v),\alpha\ra=\isn\la\pi(\alpha)v,v\ra, \qquad \forall\alpha\in\pg,\; v\in V\smallsetminus\{ 0\}, \qquad m(0)=0,
$$
is called the {\it moment map} for the representation $V$ of $G$.  Since
$m(cv)=m(v)$ for any nonzero $c\in\RR$, we may also view the moment map as defined on the
projective space $\PP V$ of $V$.  It is easy to see that
$m$ is $K$-equivariant: $m(k.v)=\Ad(k)m(v)$ for all $k\in K$.

Let $\mca=\mca(G,V)$ denote the set of {\it minimal vectors}, that is,
$$
\mca=\{ v\in V: ||v||\leq ||g.v||\quad\forall g\in G\}.
$$
In \cite{RchSld}, the following results were obtained:

\begin{itemize}
\item an orbit $G.v$ is closed if and only if $G.v$ meets $\mca$;

\item $G.v\cap\mca$ is either empty or consists of a single $K$-orbit;

\item the closure of any $G$-orbit contains a unique closed $G$-orbit;

\item $\mca = \{ v\in V: m(v)=0\}$.
\end{itemize}

The functional square norm of the moment map
\begin{equation}\label{norm}
F_m:V\longrightarrow\RR, \qquad  F_m(v):=||m(v)||^2,
\end{equation}
is scaling invariant and so it can actually be viewed as a function on any sphere
of $V$ or on $\PP V$.  The remaining critical points
of $F_m$ other than minimal vectors (i.e. those for which $F_m(v)>0$) are all {\it unstable} (i.e. $0\in\overline{G.v}$) but still enjoy
most of the nice properties of minimal vectors stated above.  Let $\cca(F_m)$ denote the critical set of $F_m:V\longrightarrow\RR$.

\begin{theorem}\cite{Mrn,HnzSchStt}\label{crit}
The following conditions are equivalent:
\begin{itemize}
\item[(i)] $v\in\cca(F_m)$.

\item[(ii)] The functional $F_m|_{G.v}$ attains its minimum value at $v$.

\item[(iii)] $\pi(m(v))v=cv$ for some $c\in\RR$.
\end{itemize}
Moreover, the following uniqueness property holds:
\begin{itemize}
\item[(iv)] The intersection of $\cca(F_m)$ with any $G$-orbit is either empty or consists of a single $K$-orbit (up to scaling).
\end{itemize}
\end{theorem}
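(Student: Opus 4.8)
The plan is to split the statement into its elementary part, the equivalence (i) $\Leftrightarrow$ (iii), which I would obtain by a direct computation of the Riemannian gradient of $F_m$, and its analytic part, statements (ii) and (iv), which carry the genuine content of real geometric invariant theory and which I would reduce to the structural results on minimal vectors already quoted. First I would compute $\grad F_m$. Differentiating the defining identity $\la m(v),\alpha\ra=\isn\la\pi(\alpha)v,v\ra$ along a curve through $v$ with velocity $w$, and using that each $\pi(\alpha)$ with $\alpha\in\pg$ is symmetric, one gets after a short manipulation
\begin{equation*}
\grad F_m(v)=\frac{4}{\|v\|^2}\bigl(\pi(m(v))v-F_m(v)\,v\bigr).
\end{equation*}
Two features are immediate: $\grad F_m(v)\perp v$, consistent with the scaling invariance $F_m(cv)=F_m(v)$, and $F_m$ is constant on $K$-orbits, since $m$ is $K$-equivariant and $\Ad(K)$ preserves $\ip$ on $\pg$. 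Hence $v\neq 0$ is a critical point of $F_m$ (equivalently, of $F_m$ on a sphere or on $\PP V$) if and only if $\pi(m(v))v=F_m(v)v$; conversely, if $\pi(m(v))v=cv$ for some $c\in\RR$, then pairing with $v$ forces $c=F_m(v)$. This settles (i) $\Leftrightarrow$ (iii).

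The tool behind the metric statements is the convexity lemma of real GIT, which I would record next: for $\alpha\in\pg$ and the path $w(t):=\pi(\exp t\alpha)v$ one has
\begin{equation*}
\frac{d}{dt}\log\|w(t)\|^2=2\la m(w(t)),\alpha\ra,
\end{equation*}
and the right-hand side is nondecreasing in $t$ (equivalently, $\log\|w(t)\|^2$ is convex), by applying Cauchy--Schwarz to $\pi(\alpha)w$ and $w$ when differentiating once more. This is the real analogue of the convexity of the Kempf--Ness function, and it is the analytic engine that keeps the relevant orbit maps proper and the minima well-behaved.

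To prove (iii) $\Rightarrow$ (ii) and the uniqueness (iv) I would reduce to the minimal-vector case $m(v)=0$ covered by the four structural bullet points stated above. Using the Cartan decomposition $G=K\exp(\pg)$ and the $K$-invariance of $F_m$, statement (ii) amounts to showing $F_m(\pi(\exp\alpha)v)\geq F_m(v)$ for every $\alpha\in\pg$. Given a critical $v$ with $\beta:=m(v)$, the operator $\pi(\beta)$ is symmetric (hence semisimple) and fixes the line $\RR v$; one then passes to the reductive centralizer $G_\beta\subset G$ of $\beta$, with Cartan decomposition inherited from $\ggo=\kg\oplus\pg$, and shifts the moment map by $\beta$. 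This exhibits $v$ as a minimal vector for an associated linear $G_\beta$-action on a suitable subspace, so that the quoted facts—$G.v$ closed $\Leftrightarrow$ it meets $\mca$, and $\mca$ meeting an orbit in a single $K$-orbit—transfer and yield both the minimization (ii) and the statement that $\cca(F_m)$ meets each $G$-orbit in a single $K$-orbit, up to the scaling coming from $F_m(cv)=F_m(v)$, giving (iv).

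The hard part will be this last reduction. Over $\RR$ one must verify that $G_\beta$ is genuinely reductive with a Cartan decomposition compatible with $\ggo=\kg\oplus\pg$, that $\pi(\beta)$ commutes with the $G_\beta$-action (since $[\beta,\ggo_\beta]=0$), and that the shifted moment map really vanishes at $v$, so that the Richardson--Slodowy results on minimal vectors may legitimately be invoked; the convexity of Step 2 is precisely what makes these orbit-theoretic facts usable. Carrying this out in full is exactly the content of \cite{Mrn,HnzSchStt}, so in the paper I would present the gradient computation in detail and cite those references for the reduction.
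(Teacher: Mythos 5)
The paper does not actually prove Theorem \ref{crit}: it is quoted from \cite{Mrn,HnzSchStt}, so there is no internal proof to compare your attempt against. That said, your proposal is sound as far as it goes, and is in fact more detailed than the paper's treatment. The gradient computation is correct: differentiating the defining identity of $m$ and using that $\pi(\alpha)$ is symmetric for $\alpha\in\pg$ does give $\grad F_m(v)=\tfrac{4}{\|v\|^2}\bigl(\pi(m(v))v-F_m(v)\,v\bigr)$, and the equivalence (i) $\Leftrightarrow$ (iii), including the identification $c=F_m(v)$ obtained by pairing with $v$, follows exactly as you say. The convexity of $t\mapsto\log\|e^{t\pi(\alpha)}v\|^2$ via Cauchy--Schwarz is also right (minor notational slip: $\pi(\exp t\alpha)v$ should be the group action $\exp(t\alpha)\cdot v=e^{t\pi(\alpha)}v$, since $\pi$ denotes the Lie algebra representation). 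For (ii) and (iv) you correctly isolate the genuinely hard content --- reducing a critical point $v$ with $\beta=m(v)$ to a minimal vector for the centralizer $G_\beta$ via a shifted moment map --- and you defer it to the same references the paper cites, which is a legitimate resolution here. The one caveat is that this reduction is presented only as a sketch: verifying that $G_\beta$ inherits a compatible Cartan decomposition over $\RR$, that the shifted moment map vanishes at $v$, and that the Richardson--Slodowy uniqueness of minimal vectors transfers to give (iv) up to scaling is exactly where the substance of \cite{HnzSchStt} lies, so as written your argument for (ii) and (iv) is an appeal to the literature rather than a proof --- which matches what the paper itself does.
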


It follows from part (iv) that two $G$-orbits containing critical points of $F_m$ can be distinguished by using $K$-invariants, which are always much more abundant.  This method will be illustrated in the examples below.

\begin{example}\label{hompol}
Let us consider the natural linear action of $G=\Gl_n$ on $V=P_{n,d}$ given by $\vp\cdot f=f\circ\vp^{-1}$.  We have that $\ggo=\glg_n$, $K=\Or(n)$, $\kg=\sog(n)$ and $\pg=\sym(n)$.  As an $\Ad(K)$-invariant
inner product on $\pg$ we take $\la\alpha,\beta\ra=\tr{\alpha\beta}$, and it is easy to
see that the inner product $\ip$ on $V$ for which the basis of monomials
$$
\left\{ x^D:=x_1^{d_1}\dots x_n^{d_n}:d_1+\dots +d_n=d,\; D=(d_1,\dots,d_n)\right\}
$$
is orthogonal and
$$
||x^D||^2=\tfrac{d_1!\dots d_n!}{d!}, \qquad\forall D=(d_1,\dots ,d_n),
$$
satisfies the required conditions (indeed, $\la f\circ\alpha,g\ra=\la f,g\circ\alpha^t\ra$ for all $f,g\in P_{n,d}$, $\alpha\in\glg_n$, see e.g. \cite[(1.30)]{Rzn1}).  Let $E_{ij}$ denote as usual the $n\times n$
matrix whose only nonzero coefficient is a $1$ in the entries $ij$.  Since
$$
\pi(E_{ij})f=\ddt|_{t=0} f\circ e^{-tE_{ij}}=-x_j\tfrac{\partial f}{\partial x_i},
$$
we obtain that the moment map $m:P_{n,d}\longrightarrow\sym(n)$ is given by
\begin{equation}\label{mmforms}
m(f)=-\tfrac{1}{||f||^2}\left[\la x_j\tfrac{\partial f}{\partial x_i},f\ra\right].
\end{equation}
We are using here the fact that $\la x_j\tfrac{\partial f}{\partial x_i},f\ra=\la
x_i\tfrac{\partial f}{\partial x_j},f\ra$ for all $i,j$.  It is also easy to see that the action of a diagonal matrix $\alpha\in\glg_n$
with entries $a_1,\dots ,a_n$ is given by
\begin{equation}\label{diagact}
\pi(\alpha)x^D=-\left(\sum_{i=1}^na_id_i\right)x^D, \qquad\forall D=(d_1,\dots ,d_n),
\end{equation}
and since
$$
m(x^D)=\left[\begin{smallmatrix} -d_1&&\\ &\ddots&\\ &&-d_n
\end{smallmatrix}\right],
$$
we get that $x^D$ is an eigenvector of $m(x^D)$ with eigenvalue $F_m(x^D)=\sum d_i^2$.  Every monomial is therefore a critical point of $F_m$ by Theorem \ref{crit}, although $\cca(F_m)$ is known to be much larger (see \cite[Section 10]{Nss} and \cite[Example 11.5]{cruzchica}).
\end{example}

According to the above example, it follows from Theorem \ref{crit}, (iv) that $\Or(n)$-invariants of $n$-ary forms play a key role in distinguishing critical points of $F_m:P_{n,d}\longrightarrow\RR$.  Let us therefore consider the {\it Laplacian} $\Delta :P_{n,d}\longrightarrow P_{n,d-2}$ defined by
$$
\Delta(f)  :=  \frac{\partial^2f}{\partial x_1^2}+ \dots  +\frac{\partial^2f}{\partial x_n^2},
$$
which is well known to be $\Or(n)$-equivariant:
$$
\Delta(\vp\cdot f)=\vp\cdot\Delta(f), \qquad\forall \vp\in\Or(n),\quad f\in P_{n,d}.
$$

\begin{example}\label{1-parlap}
Is the curve of ternary quartics $f_t=x^4+y^4+z^4+tx^2y^2$ pairwise non-equivalent?  The argument used in Example \ref{exinvform} in terms of $\Sl_3$-invariants does not work here because $I_6(f_t)=0$ for all $t$.  However, if we consider
$$
\vp_t\cdot f_t= x^4+y^4+(1+t^2/12)^{1/2}z^4+tx^2y^2, \qquad\mbox{where}\quad \vp_t:=\left[
    \begin{smallmatrix}
      1 &  &  \\
       & 1 &  \\
       &  & (1+t^2/12)^{-1/8} \\
    \end{smallmatrix}
  \right],
$$
then it is straightforward to check by using (\ref{mmforms}) that $m(\vp_t\cdot f_t)=-\tfrac{4}{3}I$, which implies that $\vp_t\cdot f_t\in\cca(F_m)$ for all $t$ by Theorem \ref{crit}, (iii).  According to Theorem \ref{crit}, (iv) we now only need to differentiate $\vp_t\cdot f_t$ with $\Or(3)$-invariants (up to scaling).  A computation of the Laplacian gives
$$
\Delta(\vp_t\cdot f_t)  = (12+2t)x^2+(12+2t)y^2+12(1+t^2/12)^{1/2}z^{2}.
$$
The function $\frac{12+2t}{12(1+t^2/12)^{1/2}}$ is the quotient of the two eigenvalues of these ternary quadratic forms,
and since it is a strictly decreasing function for $t\geq 2$,
it follows that $\vp_t\cdot f_t$ (or equivalently, $f_t$), $t\geq 2$, is a family of pairwise non-equivalent ternary quartics.
\end{example}

We now consider the representation where $2$-step algebras live.

\begin{example}\label{brackets}
Let $V=V_{n,m}$ be the representation of the real reductive group $G=\Gl_m\times\Gl_n$ given by (\ref{actionG}), for which we have $\ggo=\glg_m\oplus\glg_n$, $K=\Or(m)\times\Or(n)$, $\kg=\sog(m)\oplus\sog(n)$ and $\pg=\sym(m)\oplus\sym(n)$.  Recall that we have fixed an inner product $\ip$ on $\ngo$.  As an $\Ad(K)$-invariant
inner product on $\pg$ we can take $\la\alpha,\beta\ra=\tr{\alpha\beta}$ on each factor and the inner product $\ip$ on $V$ is defined by
\begin{equation}\label{innV}
\la\mu,\lambda\ra:= \sum\limits_{ij}\la\mu(X_i,X_j),\lambda(X_i,X_j)\ra
=\sum\limits_{ijk} \mu_{ij}^k\lambda_{ij}^k.
\end{equation}
The corresponding representation of $\ggo$ on $V$ is
\begin{equation}\label{actiong}
\pi(\alpha,\beta)\mu=\beta\mu(\cdot,\cdot)-\mu(\alpha\cdot,\cdot)-\mu(\cdot,\alpha\cdot),
\end{equation}
and the moment map $m:V_{n,m}\longrightarrow\sym(m)\oplus\sym(n)$ is given by $m(\mu)=(m_1(\mu),m_2(\mu))$ (see e.g. \cite{Ebr2}), where
\begin{equation}\label{mmalg}
m_1(\mu)=\tfrac{2}{\|\mu\|^2}\sum_i J_\mu(Z_i)^2, \qquad \la m_2(\mu)Z,W\ra=-\tfrac{1}{\|\mu\|^2}\tr{J_\mu(Z)J_\mu(W)}, \quad \forall Z,W\in\ngo_2.
\end{equation}
\end{example}

\subsection{Nilsolitons}\label{nil}
We consider in this subsection the problem of the existence of a `canonical' inner product for a given nonsingular algebra.  Our approach will follow the lines of real geometric invariant theory described in Section \ref{git}, and it turns out that the uniqueness (up to isometry) of such distinguished inner product will give rise to new invariants.

Recall the maps $J_\mu(Z)\in\sog(\ngo_1)$, $Z\in\ngo_2$, defined at the beginning of Section \ref{inv} by fixing an inner product $\ip$ on $\ngo=\ngo_1\oplus\ngo_2$ (with $\ngo_1\perp\ngo_2$).  These maps provide a nice tool to consider compatibility conditions between an algebra $\mu$ and an inner product $\ip$.  For instance, when $J_\mu(Z)^2=-\| Z\|^2I$ for any $Z\in\ngo_2$, the metric Lie algebra $(\ngo,\mu,\ip)$ is called {\it H-type} (see \cite{Kpl,BrnTrcVnh}), and if more generally, $Z\mapsto(-\det{J_\mu(Z)^2})^{1/m}$ is a positive quadratic form, then the algebra is said to be of {\it $\tilde{{\text H}}$-type} (see \cite{LvsTrb}).  Equivalently, in terms of its Pfaffian form, $(\ngo,\mu,\ip)$ is $\tilde{{\text H}}$-type if and only if, up to projective equivalence,
$$
f_\mu(x_1,\dots,x_n)=(x_1^2+\dots +x_n^2)^{m/4}.
$$
For instance, the algebra in Example \ref{su2} is of $\tilde{{\text H}}$-type but not of $H$-type.

We may also consider the following condition involving only $\mu$ and $\ip$, as another generalization of H-type: for any (or some) orthonormal basis $\{ Z_i\}$ of $\ngo_2$,
\begin{equation}\label{nilso}
\sum_i J_\mu(Z_i)^2=aI, \qquad \tr{J_\mu(Z_i)J_\mu(Z_j)}=b\delta_{ij}, \qquad \mbox{for some} \quad a,b<0.
\end{equation}
This condition is called {\it optimal Ricci tensor} in \cite[Section 7]{Ebr2} and it is proved in \cite[Section 7]{Ebr4} that if the condition on the left in \eqref{nilso} holds, then the metric has a geodesic-flow invariant Ricci tensor.  It has also been studied in \cite[Section 9]{inter}.

In terms of the moment map (see Example \ref{brackets}), condition \eqref{nilso} on $(\mu,\ip)$ is equivalent to have $m(\mu)=\left(2aI,-bI\right)$ (see (\ref{mmalg})), from which easily follows that part (iii) in Theorem \ref{crit} holds and thus $\mu$ is a critical point of the functional $F_m(\lambda)=\|m(\lambda)\|^2$.  From the uniqueness statement in part (iv) of the same theorem, we obtain that among the isomorphism class $\Gl_m\times\Gl_n\cdot\mu$, only the elements in the subset $\RR^*\Or(n)\cdot\mu$ satisfy condition (\ref{nilso}).  By using that such condition holds for the pair $(\vp\cdot\mu,\ip)$, $\vp\in\Gl_m\times\Gl_n$ if and only if does so for $(\mu,\la\vp\cdot,\vp\cdot\ra)$, this uniqueness property can be rephrased as follows:

\begin{quote}
Given an algebra $\mu\in V_{n,m}$, there exists at most one inner product on $\ngo$ (with $\ngo_1\perp\ngo_2$) for which (\ref{nilso}) holds, up to scaling and automorphisms of $\mu$.
\end{quote}

This strongly supports the proposal of these inner products as canonical or distinguished for a given $\mu$.

It also follows from invariant theory that if condition (\ref{nilso}) holds, then the $\Sl_m\times\Sl_n$-orbit of $\mu$ is closed in $V_{n,m}$.  Indeed, the orthogonal projection of the moment map $m(\mu)$ onto $\slg_m\oplus\slg_n$ vanishes, and since this is precisely the moment map for the $\Sl_m\times\Sl_n$-action on $V_{n,m}$ we get that $\mu$ is a minimal vector and so $\Sl_m\times\Sl_n\cdot\mu$ is closed (see Section \ref{git}).  Furthermore, the following conditions are equivalent for a $2$-step algebra $\mu\in V_{n,m}$:

\begin{itemize}
\item there exists an inner product $\ip$ on $\ngo$ such that condition (\ref{nilso}) holds for $(\mu,\ip)$ (in particular, $\mu\in\cca(F_m)$);

\item for any fixed inner product $\ip$ on $\ngo$, there exists an isomorphic algebra $\mu_0\in\Gl_m\times\Gl_n\cdot\mu$ such that condition (\ref{nilso}) holds for $(\mu_0,\ip)$ (or equivalently, $\mu_0$ is a minimal vector for the $\Sl_m\times\Sl_n$-action);

\item the orbit $\Sl_m\times\Sl_n\cdot\mu$ is closed.
\end{itemize}

In particular, the existence for a given nonsingular $\mu$ of an inner product satisfying (\ref{nilso}) provides in a way a candidate for a `normal form' for $\mu$; namely, the minimal vector $\mu_0$ above, which is unique up to scaling and the action of $\Or(m)\times\Or(n)$.

From a geometric point of view, we know that $\mu$ is a critical point of $F_m$ if and only if the left-invariant Riemannian metric $g$ determined by $(\ngo,\mu,\ip)$ on the corresponding simply connected nilpotent Lie group $N$ is a {\it Ricci soliton}, i.e. the Ricci flow solution starting at $g$ evolves only by time-dependent scaling and pull-back by diffeomorphisms (see \cite{soliton}).  Such metrics are called {\it nilsolitons} in the literature and have been extensively studied in the last decade (see the survey \cite{cruzchica} for further information).  The core of this interplay relies on the fact that
\begin{equation}\label{mmric}
m(\mu)=\tfrac{4}{\|\mu\|^2}\Ricci(g),
\end{equation}
where $m(\mu)$ is the moment map and $\Ricci(g)$ is the Ricci operator of $(N,g)$.

We now prove that condition (\ref{nilso}) must actually hold for any nilsoliton on a nonsingular algebra.

\begin{theorem}\label{nilthm}
Let $\mu$ be a nonsingular algebra of type $(n,m)$ and set $\ngo=\ngo_1\oplus\ngo_2$. Then the following conditions are equivalent:
\begin{itemize}
\item[(i)] $\ngo$ admits an inner product for which condition {\rm (\ref{nilso})} holds.

\item[(ii)] The Lie algebra $(\ngo,\mu)$ admits a nilsoliton inner product (or equivalently, $\mu\in\cca(F_m)$ for some $\ip$ on $\ngo$).

\item[(iii)] The $\Sl_m\times\Sl_n$-orbit of $\mu$ is closed in $V_{n,m}$.
\end{itemize}
\end{theorem}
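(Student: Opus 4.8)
The plan is to prove the only implication that is genuinely new, namely (ii)$\Rightarrow$(i). The equivalence (i)$\Leftrightarrow$(iii) and the implication (i)$\Rightarrow$(ii) were already established above for an arbitrary $2$-step algebra: the former through the characterization of closed orbits by minimal vectors, and the latter because condition (\ref{nilso}) amounts to $m(\mu)$ being scalar on each factor, whence $\mu\in\cca(F_m)$ by Theorem \ref{crit}, (iii). Nonsingularity will be used here, and only here, through two consequences: each $J_\mu(Z)$ with $Z\ne 0$ is invertible, and (for any compatible inner product) $m_2(\mu)$ is positive definite while $\sum_i J_\mu(Z_i)^2$ is negative definite, which will secure the signs $a,b<0$ at the end.

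So I would fix an inner product with $\mu\in\cca(F_m)$ and write $m(\mu)=(m_1,m_2)$. By Theorem \ref{crit}, (iii) we have $\pi(m(\mu))\mu=c\mu$ for some $c\in\RR$. Since $\pi(\lambda I,\rho I)\mu=(\rho-2\lambda)\mu$ by (\ref{actiong}), choosing $\rho-2\lambda=c$ gives $\pi(D_1,D_2)\mu=0$, where $D:=(D_1,D_2):=(m_1-\lambda I,m_2-\rho I)$. Thus $D$ is a symmetric derivation of $\mu$, block-diagonal with respect to $\ngo=\ngo_1\oplus\ngo_2$, and the derivation identity $D_2\mu(X,Y)=\mu(D_1X,Y)+\mu(X,D_1Y)$ translates, using that $D_1,D_2$ are self-adjoint and $J_\mu(Z)$ is skew, into
$$J_\mu(D_2Z)=D_1J_\mu(Z)+J_\mu(Z)D_1,\qquad\forall Z\in\ngo_2.$$
The whole point is to deduce that $D_1$ and $D_2$ are both scalar, for then $m_1,m_2$ are scalar and (\ref{nilso}) holds with $a,b<0$ by the definiteness just noted.

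The first and, I expect, main step is to show that $D_2$ is scalar, and this is exactly where invertibility is indispensable. Diagonalizing the symmetric operator $D_2$ in an orthonormal basis $\{Z_i\}$ of $\ngo_2$, say $D_2Z_i=\delta_iZ_i$, the displayed identity reads $J_\mu(Z_i)D_1=(\delta_iI-D_1)J_\mu(Z_i)$. Because $\mu$ is nonsingular, $J_\mu(Z_i)$ is invertible, so this exhibits $D_1$ and $\delta_iI-D_1$ as conjugate operators; equating traces forces $\delta_i=2(\tr D_1)/m$ for every $i$ (with $m=\dim\ngo_1$). Hence all $\delta_i$ coincide, $D_2=\delta I$ is scalar, and $\tr D_1=\tfrac{m}{2}\delta$. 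Without invertibility this conjugacy breaks down, consistent with the fact that in general critical points of $F_m$ need not be minimal vectors.

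It remains to prove that $\tilde{D}_1:=D_1-\tfrac{\delta}{2}I$ vanishes, and here the argument is a clean self-adjointness-plus-anticommutation computation. Subtracting $\delta$ times the grading derivation $(\tfrac12 I,I)$ (for which $\pi(\tfrac12 I,I)\mu=0$) shows that $(\tilde{D}_1,0)$ is again a derivation, equivalently that $\tilde{D}_1$ anticommutes with every $J_\mu(Z)$; moreover $\tr\tilde{D}_1=\tr D_1-\tfrac{m}{2}\delta=0$. Anticommutation and cyclicity of the trace give $\tr(\tilde{D}_1J_\mu(Z_i)^2)=-\tr(\tilde{D}_1J_\mu(Z_i)^2)$, so each of these vanishes. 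On the other hand $\sum_iJ_\mu(Z_i)^2=\tfrac{\|\mu\|^2}{2}\,m_1$ equals $\tilde{D}_1$ plus a scalar multiple of $I$, so pairing with $\tilde{D}_1$ and using $\tr\tilde{D}_1=0$ yields $\tfrac{\|\mu\|^2}{2}\tr(\tilde{D}_1^2)=\sum_i\tr(\tilde{D}_1J_\mu(Z_i)^2)=0$. As $\tilde{D}_1$ is symmetric this means $\|\tilde{D}_1\|^2=0$, whence $\tilde{D}_1=0$ and $D_1=\tfrac{\delta}{2}I$ is scalar. Therefore $m_1$ and $m_2$ are scalar, (\ref{nilso}) holds, and (ii)$\Rightarrow$(i) follows, completing the chain of equivalences.
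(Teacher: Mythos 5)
Your proof of (ii)$\Rightarrow$(i) is correct, but it takes a genuinely different route from the paper's. The paper argues via the stratification of the null cone: it first uses Lemma \ref{possemi} (positivity of the Pfaffian form) to get $0\notin\overline{\Sl_m\times\Sl_n\cdot\mu}$, places $\mu$ in the stratum $\sca_\beta$ for the specific diagonal $\beta=\Diag(-\tfrac{2}{m}I_m,\tfrac{1}{n}I_n)$ using the machinery of \cite{standard} and \cite{einsteinsolv}, concludes that $m(\mu)$ is conjugate to $\beta$ (so the Ricci operator has exactly two eigenvalues), and then reads off condition (\ref{nilso}) from Eberlein's formula for the Ricci operator. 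You instead argue directly: from $\pi(m(\mu))\mu=c\mu$ you extract the symmetric graded derivation $(D_1,D_2)=(m_1-\lambda I,m_2-\rho I)$, use the invertibility of each $J_\mu(Z_i)$ to exhibit $D_1$ as conjugate to $\delta_i I-D_1$ and hence force all eigenvalues $\delta_i$ of $D_2$ to coincide, and then kill the traceless part of $D_1$ by the anticommutation--trace pairing $\tr(\tilde{D}_1 J_\mu(Z_i)^2)=0$ combined with $\sum_i J_\mu(Z_i)^2=\tfrac{\|\mu\|^2}{2}m_1$. All the individual computations check out (the sign claims $a,b<0$ do follow from skew-symmetry and invertibility of the $J_\mu(Z_i)$). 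What each approach buys: the paper's proof exploits nonsingularity through semistability of the Pfaffian and situates the theorem inside the general stratification picture, at the cost of invoking several external results; yours is self-contained, never needs Lemma \ref{possemi} or the Ricci formula, uses nonsingularity only through invertibility of $J_\mu(Z)$, and in effect shows directly that the only possible nilsoliton (pre-Einstein) derivation of a nonsingular $2$-step algebra is a multiple of the grading derivation $(\tfrac{1}{2}I,I)$ --- which is close in spirit to the alternative proof via \cite[Lemma 5]{Nkl1} and \cite[Theorem 2]{Nkl2} mentioned in the remark following the theorem.
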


\begin{remark}
This theorem also follows by using \cite[Lemma 5]{Nkl1} and \cite[Theorem 2]{Nkl2}.
\end{remark}

\begin{remark}
The equivalence between parts (i) and (iii) has been observed above and does not need the hypothesis of nonsingularity to hold (cf. \cite[Proposition 9.1]{inter} or \cite[Proposition 7.4]{Ebr2}).  The fact that part (i) implies (ii) has also been mentioned.  It is worthwhile pointing out that these conditions were proved in \cite[Proposition 7.9]{Ebr2}, and independently in \cite{Nkl2}, to hold on an open and dense subset of $V_{n,m}$ if $(m,n)\ne (2k-1,2)$ for any $k\in\NN$.
\end{remark}

\begin{proof}
It only remains to prove that part (ii) implies part (i).  It follows from Lemma \ref{possemi} that the nonsingularity of $\mu$ implies that
\begin{equation}\label{ns-ss}
0\notin\overline{\Sl_m\times\Sl_n\cdot\mu},
 \end{equation}
the closure of the orbit relative to the vector space topology. Indeed, if $\mu_k:=(\psi_k,\vp_k)\cdot\mu\to 0$, as $k\to\infty$, with $(\psi_k,\vp_k)\in\Sl_m\times\Sl_n$, then their Pfaffian forms $f_{\mu_k}\to 0$, which are given by $f_{\mu_k}=f_\mu\circ\vp_k^t\in\Sl_n\cdot f_\mu$ for all $k$, and so this contradicts Lemma \ref{possemi} since $f_\mu$ is positive.

If we use the nilsoliton inner product on $\ngo$ (with $\ngo_1\perp\ngo_2$) to define the moment map $m$, then $\mu\in\cca(F_m)$ by \cite[Theorem 4.2]{soliton}.
In what follows, we will use the notation in \cite[pp. 1869-1870]{standard}.  For
$$
\beta=\left[\begin{smallmatrix} -\tfrac{2}{m}I_m & \\ & \tfrac{1}{n}I_n\end{smallmatrix}\right]\in\glg_{m+n},
$$
it is straightforward to check by using \eqref{actiong} that $\pi(\beta)\mu=\|\beta\|^2\mu$, which implies that $\mu\in Z_\beta$ (see the first paragraph after the definition of $Z_\beta$ in \cite[pp.1869]{standard}).  Since $\beta\perp D$, where
$$
\beta+\|\beta\|^2I=(2m+\tfrac{1}{n})D\in\Der(\mu), \qquad D:=
\left[\begin{smallmatrix} I_m & \\ & 2I_n\end{smallmatrix}\right],
$$
one obtains that $\hg_\beta=\slg_m\oplus\slg_n\oplus\RR D$, and so $H_\beta\cdot\mu=\Sl_m\times\Sl_n\cdot\mu$.  It now follows from (\ref{ns-ss}) and \cite[Proposition 2.14,(i)]{standard} that $\mu$ belongs to the stratum $\sca_\beta$.

Finally, by using \cite[Theorem 2.3]{einsteinsolv} and \cite[Proposition 3.4,(ii)]{einsteinsolv}, we get from $\mu\in\cca(F_m)$ that $m(\mu)$ is conjugate to $\beta$.  It follows from (\ref{mmric}) that the Ricci operator $\Ricci(g)$ has exactly two nonzero eigenvalues of opposite signs.  By using the formula for $\Ricci(g)$ given in \cite[Proposition 2.5]{Ebr}, we obtain that condition (\ref{nilso}) holds for the pair $(\vp\cdot\mu,\ip)$ for some $\vp\in\Or(m)\times\Or(n)$, and hence it does so for $(\mu,\ip)$, from which part (i) follows.
\end{proof}

By fixing an inner product $\ip$ on $\ngo$ as in previous sections, we deduce the following from Theorem \ref{nilthm}.

\begin{corollary}\label{nilcor}
If $\mu$ is nonsingular, then the following conditions are equivalent:
\begin{itemize}
\item [(i)] The nilmanifold $(N,g)$ attached to $(\ngo,\mu,\ip)$ is a nilsoliton.

\item[(ii)] Condition {\rm (\ref{nilso})} holds for $(\ngo,\mu,\ip)$.

\item[(iii)] The Ricci operator $\Ricci(g)$ has exactly two different eigenvalues.
\end{itemize}
\end{corollary}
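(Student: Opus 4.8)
The plan is to prove the two equivalences (i)$\Leftrightarrow$(ii) and (ii)$\Leftrightarrow$(iii) separately, keeping the inner product $\ip$ on $\ngo$ (and with it $g$, the maps $J_\mu$, and the moment map $m$) fixed once and for all. The only ingredients I would need are Theorem \ref{nilthm} together with its proof, the characterization of critical points in Theorem \ref{crit}, the soliton criterion of \cite{soliton}, and the explicit formulas (\ref{mmalg}) and (\ref{mmric}) relating $m(\mu)$, the operators $J_\mu(Z_i)$, and the Ricci operator $\Ricci(g)$.

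For (i)$\Leftrightarrow$(ii), the easy direction is (ii)$\Rightarrow$(i): reading (\ref{nilso}) through (\ref{mmalg}) it is exactly the assertion $m(\mu)=(2aI,-bI)$, so (\ref{actiong}) gives $\pi(m(\mu))\mu=c\mu$ and hence $\mu\in\cca(F_m)$ by Theorem \ref{crit}, (iii); then $(N,g)$ is a nilsoliton by \cite{soliton}. This uses no nonsingularity. For the converse (i)$\Rightarrow$(ii) I would reprove nothing: if $(N,g)$ is a nilsoliton then $\mu\in\cca(F_m)$ for this $\ip$, and the final paragraph of the proof of Theorem \ref{nilthm} shows precisely that a nonsingular critical $\mu$ satisfies (\ref{nilso}) for $(\mu,\ip)$, the orthogonal correction $\vp\in\Or(m)\times\Or(n)$ appearing there being harmless because (\ref{nilso}) is invariant under $\Or(m)\times\Or(n)$.

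It remains to treat (ii)$\Leftrightarrow$(iii). The forward implication (ii)$\Rightarrow$(iii) is immediate: substituting $m(\mu)=(2aI,-bI)$ into (\ref{mmric}) shows that $\Ricci(g)$ equals $\tfrac{a\|\mu\|^2}{2}I$ on $\ngo_1$ and $-\tfrac{b\|\mu\|^2}{4}I$ on $\ngo_2$, i.e. it has exactly two eigenvalues, of opposite sign. The reverse implication (iii)$\Rightarrow$(ii) is where I expect the real work, and where nonsingularity is indispensable. I would first record that $\Ricci(g)$ preserves the orthogonal splitting $\ngo=\ngo_1\oplus\ngo_2$ and, by (\ref{mmalg}) and (\ref{mmric}), acts on $\ngo_1$ as $\tfrac12\sum_i J_\mu(Z_i)^2$ and on $\ngo_2$ as the symmetric operator determined by $\la\Ricci(g)Z,W\ra=-\tfrac14\tr(J_\mu(Z)J_\mu(W))$. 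Skew-symmetry of the $J_\mu(Z_i)$ makes the $\ngo_1$-block negative semidefinite, and nonsingularity (Definition \ref{ns}) upgrades this to strictly negative definite: a nonzero vector in the common kernel of all $J_\mu(Z_i)$ would be killed by the invertible operator $J_\mu(Z_1)$, a contradiction. Dually, $\la\Ricci(g)Z,Z\ra=\tfrac14\|J_\mu(Z)\|^2>0$ on $\ngo_2$. Thus every eigenvalue of $\Ricci(g)$ on $\ngo_1$ is strictly negative and every eigenvalue on $\ngo_2$ strictly positive, so the hypothesis of exactly two distinct eigenvalues forces $\Ricci(g)$ to be one scalar on $\ngo_1$ and another on $\ngo_2$, which is precisely (\ref{nilso}). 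The point worth stressing is that this last step genuinely breaks down without nonsingularity: for a singular algebra the $\ngo_1$-block may have a zero eigenvalue, so having two distinct eigenvalues no longer pins down the scalar block structure, and it is exactly the invertibility of the $J_\mu(Z)$ that excludes this.
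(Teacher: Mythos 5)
Your proof is correct and follows essentially the route the paper intends: the corollary is stated there without a written proof, as a deduction from Theorem \ref{nilthm}, and your argument supplies exactly that deduction --- (i)$\Leftrightarrow$(ii) via Theorem \ref{nilthm} and the $\Or(m)\times\Or(n)$-invariance of (\ref{nilso}), and (ii)$\Leftrightarrow$(iii) via the formulas (\ref{mmalg}), (\ref{mmric}) together with the strict definiteness of the two Ricci blocks, which is the same sign argument implicit in the last paragraph of the paper's proof of Theorem \ref{nilthm}. The only blemish is a harmless normalization slip in (ii)$\Rightarrow$(iii) (the factor $\|\mu\|^2$ should not appear in the eigenvalues $a/2$ and $-b/4$ of $\Ricci(g)$), which does not affect the conclusion.
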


\begin{example}
$H$-type Lie groups are all nonsingular nilsolitons.  On the other hand, it is easy to check that the $2$-step algebra of type $(3,8)$ given in Example \ref{su2} is a nonsingular nilsoliton which is not $H$-type.
\end{example}

In order to get explicit continuous families of nonsingular algebras of type $(n,m)$ admitting a nilsoliton metric, we may consider those $\mu\in V_{n,m}$ such that each entry of $J_\mu(x_1Z_1+\dots+x_nZ_n)$ is a scalar multiple of some $x_i$ and there is not any repetition of the $x_i$'s appearing in any given row (or column).  This is equivalent to say that $\{ X_1,\dots,X_m,Z_1,\dots,Z_n\}$ is a {\it nice} basis for the algebra $\mu$
(see e.g. \cite[Lemma 3.9]{einsteinsolv} or \cite[Definition 3]{Nkl2}).  In the presence of a nice basis, Y. Nikolayevsky has proved a very useful criterion for the existence of a nilsoliton; namely, when a certain symmetric $q\times q$ matrix $U$ which only depends on the set of indices $(i,j,k)$ with $\mu_{ij}^k\ne 0$, has a positive solution $v\in\RR^q$ to the equation $Uv=[1]$, where $[1]\in\RR^q$ denotes the vector with all its entries $1$ (see \cite[Theorem 3]{Nkl2}).  This approach will be used in Section \ref{38}.

If in addition to the nice basis condition we ask for each row (or column) of $J_\mu(x_1Z_1+\dots+x_nZ_n)$ to have the same number of nonzero entries, then the metric algebra is called {\it uniform} (see \cite{Dlf,Wlt}).  In this case, it is easy to see that the matrix $U$ from Nikolayevky's criterion has all $3$'s as diagonal entries (as always) and only $1$'s out of the diagonal with an identical number of $1$'s on each row (or column).  It follows that there exists a positive $v\in\RR^q$ to $Uv=[1]$ (just take $v_1=\dots=v_q=a$ for the right $a$), and thus $\mu$ admits a nilsoliton inner product.  It is important to note that one obtains this independently of the precise value of the structure constants $\mu_{ij}^k$ of the algebra $\mu$, as the uniformity condition is only in terms of the set of structure constants which are nonzero.

Let us summarize the above discussion in a lemma for future reference.

\begin{lemma}\label{uniform}\cite{Dlf,Wlt}
Any uniform $\mu\in V_{n,m}$ admits a nilsoliton inner product.
\end{lemma}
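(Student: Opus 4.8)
The plan is to apply the nilsoliton existence criterion for nice bases recalled immediately above, namely \cite[Theorem 3]{Nkl2}: a nice algebra admits a nilsoliton inner product if and only if the symmetric $q\times q$ matrix $U$ — whose rows and columns are indexed by the triples $\alpha=(i,j,k)$ with $\mu_{ij}^k\ne 0$, and which depends only on this index set and not on the actual values $\mu_{ij}^k$ — admits a solution $v\in\RR^q$ of $Uv=[1]$ with all entries positive, where $[1]$ denotes the all-ones vector. First I would make $U$ explicit for a $2$-step algebra. Writing each triple $\alpha=(i,j,k)$ as the vector $E_i+E_j-E_{m+k}\in\RR^{m+n}$, where $E_1,\dots,E_m$ are the $\ngo_1$-directions and $E_{m+1},\dots,E_{m+n}$ the $\ngo_2$-directions, one has $U_{\alpha\beta}=\langle E_i+E_j-E_{m+k},\,E_{i'}+E_{j'}-E_{m+k'}\rangle$ (see \cite{einsteinsolv,Nkl2}). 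Since $i$, $j$ and $m+k$ are always distinct, every diagonal entry equals $3$; and for $\alpha\ne\beta$ the entry $U_{\alpha\beta}$ is the number of indices shared by the unordered pairs $\{i,j\}$ and $\{i',j'\}$ plus the Kronecker symbol $[k=k']$, since the mixed $\ngo_1$--$\ngo_2$ terms vanish.

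Next I would read off the two structural features of $U$ from the hypotheses. The nice basis condition says that two distinct triples cannot simultaneously share a row (or column) index of $J_\mu$ and carry the same variable $Z_k$, for that is precisely a repeated $x_k$ in a single row (or column) of $J_\mu$; this rules out the value $U_{\alpha\beta}=2$ and also forbids $\{i,j\}=\{i',j'\}$ for $\alpha\ne\beta$, so that every off-diagonal entry of $U$ lies in $\{0,1\}$. Uniformity then enters through the fact that each $\ngo_1$-index $i$ occurs in exactly $\ell$ triples (the $\ell$ nonzero entries of the $i$-th row of $J_\mu$); counting the off-diagonal $1$'s in the row of $U$ indexed by $\alpha=(i,j,k)$ — those triples meeting $\{i,j\}$ in exactly one index, together with those carrying the same $Z_k$ and disjoint $\ngo_1$-indices — is then meant to come out independent of $\alpha$, so that all row sums of $U$ coincide, say to a common value $s>0$.

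Granting constant row sums, the conclusion is immediate: $U[1]=s[1]$, so $v:=\tfrac1s[1]$ is a positive solution of $Uv=[1]$ and \cite[Theorem 3]{Nkl2} yields a nilsoliton inner product for $\mu$. I would emphasize that this argument is, by construction, insensitive to the precise values of the structure constants $\mu_{ij}^k$, since only their vanishing pattern enters $U$. The main obstacle is the second step. The contribution to a row sum coming from triples sharing one $\ngo_1$-index is manifestly constant (it equals $2(\ell-1)$ by the $\ell$-regularity provided by uniformity), so the genuine point is the contribution from triples carrying the same $Z_k$, which equals the number of nonzero entries of the single skew matrix $J_\mu(Z_k)$ minus one. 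One must therefore verify that this count is the same for every $k$, and it is exactly here that the full strength of the uniformity hypothesis — not merely niceness — has to be used; this equidistribution of the off-diagonal $1$'s among the rows of $U$ is the step I expect to require the most care.
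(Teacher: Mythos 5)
Your proposal follows exactly the route of the paper: invoke Nikolayevsky's criterion, observe that $U$ has $3$'s on the diagonal and only $0$'s and $1$'s off it, argue that the row sums of $U$ are constant, and take $v=\tfrac{1}{s}[1]$. Your computation of the row sum is also the right one: for a triple $\alpha=(i,j,k)$ it equals $3+2(\ell-1)+(N_k-1)=2\ell+N_k$, where $\ell$ is the common number of nonzero entries per row of $J_\mu(x_1Z_1+\dots+x_nZ_n)$ and $N_k$ is the number of triples with third index $k$ (half the number of nonzero entries of $J_\mu(Z_k)$).

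The one step you leave open --- that $N_k$ is independent of $k$ --- is precisely the step the paper disposes of with ``it is easy to see'', and your suspicion that it needs the ``full strength'' of uniformity is justified: the row-count condition on $J_\mu(x_1Z_1+\dots+x_nZ_n)$ alone does \emph{not} force $N_k$ to be constant. For instance, with $n=2$, $m=6$ and $\mu(X_1,X_2)=Z_1$, $\mu(X_3,X_4)=Z_1$, $\mu(X_5,X_6)=Z_2$, every row of $J_\mu(xZ_1+yZ_2)$ has exactly one nonzero entry, yet $N_1=2$ and $N_2=1$, so $U[1]=(4,4,3)^t$ is not a multiple of $[1]$ (the lemma is not contradicted, since $Uv=[1]$ still has the positive solution $(\tfrac14,\tfrac14,\tfrac13)$, but the constant-row-sum argument breaks). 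The resolution is that the Deloff--Wolter notion of \emph{uniform} is two-sided: it requires not only that each $X_i$ occur in the same number $\ell$ of nonzero brackets, but also that each central direction $Z_k$ occur in the same number $N$ of nonzero brackets, i.e.\ that all the matrices $J_\mu(Z_k)$ have the same number of nonzero entries. With that second half of the hypothesis in hand, every row of $U$ sums to $2\ell+N$ and your argument closes immediately. So to complete your proof you need only cite (or restore to the definition) this second uniformity condition; no further combinatorial work is required.
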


In particular, according to Theorem \ref{nilthm}, any uniform nonsingular algebra admits an inner product for which condition (\ref{nilso}) holds.

\section{Nonsingular algebras of type $(2,m)$}\label{2m}

The set of isomorphism classes in $V_{2,m}$, that is, when $\dim{\ngo_2}=2$, can be nicely parameterized by using pencil invariants.  We refer to \cite{Ggr,LvsTrb} and references therein for more detailed treatments and proofs.

To each set
$$
\sca=\{ (\alpha_1,k_1), \dots  ,(\alpha_r,k_r), \epsilon_1, \dots ,\epsilon_s\},  \qquad\alpha_i\in\CC\cup\{\infty\}, \quad k_i,\epsilon_i\in\NN,
$$
we associate the element $\mu_{\sca}\in V_{2,m}$ of type $(2,m)$ such that the matrix of $J_{\mu_{\sca}}(xZ_1+yZ_2)$ with respect to the fixed basis $\{ X_1,\dots,X_m\}$ is made of blocks attached to each of the elements of $\sca$, which are defined according to the following rules:

{\small
$$
(\infty,k)\rightsquigarrow \left[
  \begin{array}{ccc|cccc}
       &  &  & 0 &  &  & x \\
       &  &  &  &  & x & y \\
       &  &  &  & \udots & \udots &  \\
       &  &  & x & y &  & 0 \\
     \hline
     & & &  &  &  &  \\
     & \ast &  &  &  &  &  \\
     &  &  &  &  &  &  \\
  \end{array}
\right] \quad(2k\times 2k),
$$

$$
(a,k)\rightsquigarrow  \left[
  \begin{array}{ccc|crrc}
       &&  & 0 & &  & y-ax \\
       &&  & &  & y-ax & x \\
       &&  &  & \udots & \udots &  \\
       &&  & y-ax & x & & 0 \\
     \hline
     &&  &&&&  \\
     & \ast &    &&&&  \\
     &&  &&&&  \\
  \end{array}
\right]  \quad   (2k\times 2k),
$$

\begin{equation}\label{Jcomplex}
(a+\im b,k)\rightsquigarrow  \left[
\begin{array}{ccc|cccccccc}
    &&    & 0&  &  &  &  &  & -bx & y-ax \\
    &&    & &  &  &  &  &  & y-ax & bx \\
    &&    & & &  &  & -bx & y-ax & 0 & x \\
    &&    & & &  &  & y-ax & bx & x & 0 \\
    &&    & & &  &  &  &  &  &  \\
    &&    & & & \udots &  & & \udots  &  &  \\
    &&    & & &  &  &  &  &  &  \\
    &&  &-bx& y-ax & 0 & x &  &  &  &  \\
    &&  &y-ax & bx & x & 0 &  &  &  & 0 \\
   \hline
   &&  &&&&&&&& \\
   & \ast &  &&&&&&&&  \\
   &&    &&&&&&&&
\end{array}
\right] \quad (4k\times 4k),
\end{equation}

$$
\epsilon\rightsquigarrow  \left[
  \begin{array}{ccc|cccc}
&& &y&&&0\\
&& &x&y&&\\
&& &&\ddots&\ddots&\\
&& &&&x&y\\
&& &0&&&x\\
\hline
&& &&&&\\
&\ast& &&&&\\
&& &&&&\\
  \end{array}
\right] \quad ((2\epsilon+1)\times (2\epsilon+1)).
$$ }
The upper right block in the first two matrices above is $k\times k$, in the third one is $2k\times 2k$ and in the last matrix has $\epsilon+1$ rows and $\epsilon$ columns (the size of the whole matrix is indicated on the right of each matrix).  Notice that
$$
m=\sum_{\alpha_i\in\RR\cup\{\infty\}} 2k_i+ \sum_{\alpha_i\in\CC\smallsetminus\RR} 4k_i+\sum_{i=1}^s(2\epsilon_i+1).
$$
Every $\mu\in V_{2,m}$ of type $(2,m)$ (i.e. $\mu(\ngo_1,\ngo_1)=\ngo_2$) is isomorphic to a $\mu_{\sca}$ for at least one of such sets $\sca$, and two $\mu_{\sca}, \mu_{\sca'}$ are isomorphic if and only if $s=s'$, $\{\epsilon_1, \dots ,\epsilon_s\}=\{\epsilon'_1, \dots ,\epsilon'_s\}$ (counting multiplicities), $r=r'$, and there exists a real M\"obius transformation $T:\CC\cup\{\infty\}\longrightarrow\CC\cup\{\infty\}$,
\begin{equation}\label{isomuS}
Tz:=\frac{az+b}{cz+d}, \qquad \left[\begin{array}{cc} a&b\\ c&d\end{array}\right]\in\Gl_2(\RR),
\end{equation}
such that $\{ (\alpha'_1,k'_1), \dots  ,(\alpha'_r,k'_r)\}= \{(T\alpha_1,k_1), \dots  ,(T\alpha_r,k_r)\}$ (counting multiplicities).

It is straightforward to see that the Pfaffian form of $\mu_{\sca}$ is given by
\begin{equation}\label{PfmuS}
f_{\mu_{\sca}}(x,y)=x^{\left(\sum\limits_{\alpha_i=\infty} k_i\right)}\, .\, \prod_{\alpha_i\in\RR} (y-\alpha_ix)^{k_i} \, .\, \prod_{\alpha_i\in\CC\smallsetminus\RR} (y-\alpha_ix)^{k_i}(y-\overline{\alpha_i}x)^{k_i},
\end{equation}
from which we deduce that $\mu_{\sca}$ is nonsingular if and only if
$$
\sca=\{ (\alpha_1,k_1), \dots  ,(\alpha_r,k_r)\},  \qquad \mbox{with} \quad\alpha_1,\dots ,\alpha_r\in\CC\smallsetminus\RR.
$$
In that case, if $\alpha_i=a_i+\im b_i$ then
$$
f_{\mu_{\sca}}(x,y)=\prod_{i} \left((y-a_ix)^2+b_i^2x^2\right)^{k_i}.
$$

\begin{example}
If $\sca=\{(\im,1),(\im,1)\}$ and $\sca'=\{(\im,2)\}$ then $\mu_{\sca},\mu_{\sca'}\in V_{2,8}$ are two non-isomorphic nonsingular algebras with identical Pfaffian form given by $(x^2+y^2)^2$.  By adding the family $\mu_{\sca_t}$, $\sca_t=\{(\im,1),(t\im,1)\}$, $t>1$, of pairwise non-isomorphic algebras with Pfaffian forms $f_{\mu_{\sca_t}}=(x^2+y^2)(t^2x^2+y^2)$, one obtains the classification of all nonsingular algebras of type $(2,8)$ (cf. \cite[Corollary 4.9,(ii)]{LvsTrb}).  The non-isomorphism within the family follows by using the fact that the hyperbolic distance between $\im$ and $t\im$ strictly increases with $t$, or alternatively, from the non-equivalence between their Pfaffian forms.
\end{example}

Y. Nikolayevsky has obtained a complete classification of all algebras of type $(2,m)$ admitting a nilsoliton inner product.

\begin{theorem}\label{nilYN}\cite{Nkl3}
Let $\mu_{\sca}$ be a $2$-step algebra of type $(2,m)$, where
$$
\begin{array}{c}
\sca=\{ (\alpha_1,k_1), \dots, (\alpha_r,k_r), (a_1,j_1), \dots  ,(a_u,j_u), \epsilon_1, \dots ,\epsilon_s\}, \\
\alpha_i\in\CC\smallsetminus\RR, \quad a_i\in\RR\cup\{\infty\}, \quad \epsilon_i,k_i,j_i\in\NN,
\end{array}
$$
and assume that either $r>0$ or $u\geq 3$.  Then $\mu_{\sca}$ admits a nilsoliton inner product if and only if
\begin{itemize}
\item $k_1=\dots =k_r=1$ and $j_1=\dots =j_u=1$, and
\item[ ]
\item $|\{i:a_i=a\}|<r+\unm u$, for any $a\in\RR\cup\{\infty\}$.
\end{itemize}
\end{theorem}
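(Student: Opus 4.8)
The plan is to run the real geometric invariant theory of Section \ref{git} on the representation $V_{2,m}$ of $\Gl_m\times\Gl_2(\RR)$, working with critical points of the moment map rather than with closed orbits. Since any odd block makes $J_{\mu_{\sca}}(Z)$ singular for every $Z$, and hence makes $\mu_{\sca}$ itself singular, the closed-orbit characterization of Theorem \ref{nilthm} is unavailable, and I must detect nilsolitons directly through Theorem \ref{crit}, (iii): $\mu_{\sca}$ admits a nilsoliton iff some isomorphic $\mu_0$ satisfies $\pi(m(\mu_0))\mu_0=c\mu_0$. I would view the pencil $J_{\mu_{\sca}}(xZ_1+yZ_2)=xA+yB$ as a point acted on by $\Gl_m$ (congruence on $\ngo_1$) and by $\Gl_2(\RR)$ (M\"obius transformations on $\langle Z_1,Z_2\rangle$), so that the eigenvalues $\alpha_i\in\CC\smallsetminus\RR$ and $a_i\in\RR\cup\{\infty\}$ become a weighted configuration of interior and boundary points for the $\Sl_2(\RR)$-action, the $\epsilon_i$ contributing neutral Kronecker summands. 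Following the stratification machinery used in the proof of Theorem \ref{nilthm}, the existence of a nilsoliton is equivalent to $\mu_{\sca}$ lying in the stratum $\sca_{\beta}$ of a label $\beta=\beta_{\sca}$ read off from the block dimensions and being polystable (closed orbit) for the associated subgroup $H_\beta$; the two bulleted conditions should be exactly this polystability.

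For the existence direction I assume $k_i=j_i=1$ and $|\{i:a_i=a\}|<r+\unm u$ for all $a$. In the model case where the eigenvalues can be placed at $0,\infty$ and $\im$ by a single $\Gl_2(\RR)$-transformation, every block is in the monomial pattern, $\{X_1,\dots,X_m,Z_1,Z_2\}$ is a nice basis, and I can apply Nikolayevsky's criterion: the symmetric matrix $U$ (built only from the index set of nonzero structure constants) has $3$'s on the diagonal and off-diagonal $1$'s recording shared variables, and the mass bound is precisely what forces $Uv=[1]$ to have a strictly positive solution $v$. In the balanced subcases this already makes the algebra uniform, so Lemma \ref{uniform} yields the nilsoliton outright.

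For the obstruction direction I assume a nilsoliton exists and exclude the two failures. If some $k_i>1$ or $j_i>1$, the corresponding Jordan block is a proper degeneration: the grading by its Jordan filtration is a one-parameter subgroup of $\Gl_m$ along which $\mu_{\sca}$ limits to its semisimplification, placing $\mu_{\sca}$ in the boundary of a strictly larger orbit, and the surviving nilpotent part is incompatible with $\pi(m(\mu_0))\mu_0=c\mu_0$, so $\mu_{\sca}\notin\cca(F_m)$. If instead some real $a$ has $|\{i:a_i=a\}|\geq r+\unm u$, I destabilize with the parabolic one-parameter subgroup of $\Sl_2(\RR)$ fixing $a\in\PP^1(\RR)$; this is the exact analogue of the instability of a binary form with a root of multiplicity $\geq d/2$ in Example \ref{binary}, and it destabilizes $\mu_{\sca}$ for $H_\beta$, contradicting the polystability a critical point requires. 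The small excluded configurations ($r=0$ with $u\leq 2$) are treated by hand.

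The hard part will be precisely where the Pfaffian dictionary of the nonsingular case breaks down. First, a single $\Gl_2(\RR)$-transformation cannot simultaneously normalize more than a few of the eigenvalues, so I must show that nilsoliton existence depends only on the combinatorial type of $\sca$---on the multiplicities $k_i,j_i$ and the coincidence pattern of the real $a_i$---and not on the actual values of the complex $\alpha_i$; this independence is the bulk of the bookkeeping and is what reduces the general case to the nice-basis model above. Second, the Jordan-block obstruction must be made precise by exhibiting the destabilizing grading and checking that its limit is non-isomorphic, since the Pfaffian cannot separate $\mu_{\sca}$ from its semisimplification: for instance $\sca=\{(\im,2)\}$ and $\sca=\{(\im,1),(\im,1)\}$ share the Pfaffian $(x^2+y^2)^2$ coming from \eqref{PfmuS}, yet only the latter admits a nilsoliton.
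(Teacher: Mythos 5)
First, a point of orientation: the paper does not prove this statement at all --- it is quoted verbatim from Nikolayevsky (\cite[Theorem 1]{Nkl3}, the ``generic case''), and the authors explicitly say they state it without proof because only the nonsingular specialization (Corollary \ref{nil2m}) is needed, which they then reprove via Theorem \ref{nilthm} and the degeneration Lemma \ref{deg}/Corollary \ref{degcor}. So there is no in-paper proof to compare against; I can only assess your sketch on its own terms, and measured that way it has two genuine gaps.

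The more serious one is in your obstruction direction. You correctly observe at the outset that the presence of odd blocks $\epsilon_i$ (or real eigenvalues $a_i$) makes $\mu_{\sca}$ singular, so that Theorem \ref{nilthm} is unavailable --- but your argument for excluding $k_i>1$ or $j_i>1$ then quietly reverts to the closed-orbit logic: you exhibit a degeneration of $\mu_{\sca}$ onto its semisimplification and conclude non-criticality. For singular algebras this inference is false: every critical point of $F_m$ with $F_m>0$ is unstable, and e.g.\ $\hg_3\oplus\RR$ is a singular $2$-step algebra admitting a nilsoliton whose $\Sl_3\times\Sl_1$-orbit in $V_{1,3}\cong\RR^3$ is visibly not closed. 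Non-closedness of the $\Sl_m\times\Sl_2$-orbit therefore proves nothing here; what is needed is polystability for the correct subgroup $H_\beta$, and the stratum label $\beta$ of a singular $\mu_{\sca}$ is \emph{not} the two-eigenvalue $\beta$ used in the proof of Theorem \ref{nilthm} (the moment map at a singular critical point has more than two eigenvalues), so that entire step of the machinery must be recomputed before your Jordan-block and parabolic degenerations can be certified as $H_\beta$-degenerations. The phrase ``the surviving nilpotent part is incompatible with $\pi(m(\mu_0))\mu_0=c\mu_0$'' is the conclusion you need, not an argument for it. The second gap is in the existence direction: normalizing the spectrum into $\{0,\infty,\im\}$ so that $\{X_i,Z_j\}$ becomes a nice basis is only possible for configurations with at most two distinct real eigenvalues and one conjugate pair up to M\"obius equivalence, and the assertion that nilsoliton existence depends only on the combinatorial type of $\sca$ (so that the general case reduces to this model) is precisely the hard content of Nikolayevsky's proof --- you flag it as ``bookkeeping'' but it is the theorem. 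As written, the proposal establishes the result only for those special configurations where uniformity and Lemma \ref{uniform} apply directly.
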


We have only stated a part of the classification theorem \cite[Theorem 1]{Nkl3}, called the generic case, as the remaining parts are hard to describe and are not needed in the nonsingular case.

\begin{corollary}\label{nil2m}
A nonsingular $\mu_{\sca}$, $\sca=\{ (\alpha_1,k_1), \dots  ,(\alpha_r,k_r)\}$, admits a nilsoliton inner product if and only if $k_1=\dots =k_r=1$.
\end{corollary}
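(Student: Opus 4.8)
The statement is an immediate specialization of Theorem \ref{nilYN}, but since the point of Section \ref{2m} is to give an \emph{alternative} proof, I would present a self-contained argument based on the geometric invariant theory of Section \ref{git}. To begin, I would record that, by the parametrization of Section \ref{2m}, nonsingularity of $\mu_{\sca}$ forces every $\alpha_i\in\CC\smallsetminus\RR$; in the notation of Theorem \ref{nilYN} this means $u=s=0$, while $r\geq 1$ since $m\geq 4$. Feeding this into Theorem \ref{nilYN} already gives the claim: the hypothesis $r>0$ holds, the condition $j_1=\dots=j_u=1$ is vacuous, and the inequality $|\{i:a_i=a\}|<r+\unm u$ reads $0<r$ and is automatic, so the criterion collapses to $k_1=\dots=k_r=1$. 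The rest of the plan avoids \cite{Nkl3}.

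For the `if' direction I would exploit the block structure of $J_{\mu_{\sca}}$ coming from \eqref{Jcomplex}. When all $k_i=1$ each $\alpha_i=a_i+\im b_i$ contributes a single $4\times 4$ block, and a direct computation on that block yields $J_{\mu_{\sca}}(Z_1)^2=-|\alpha_i|^2 I$, $J_{\mu_{\sca}}(Z_2)^2=-I$ and $\tr(J_{\mu_{\sca}}(Z_1)J_{\mu_{\sca}}(Z_2))=4a_i$ there. Since $J_{\mu_{\sca}}(Z_1)^2+J_{\mu_{\sca}}(Z_2)^2$ is block-diagonal, scaling the $i$-th block by $s_i$ so as to normalize $s_i^2(|\alpha_i|^2+1)$ to a common value makes the left identity in \eqref{nilso} hold; the two remaining conditions $\tr J_{\mu_{\sca}}(Z_1)^2=\tr J_{\mu_{\sca}}(Z_2)^2$ and $\tr(J_{\mu_{\sca}}(Z_1)J_{\mu_{\sca}}(Z_2))=0$ then translate into $\sum_i\tfrac{|\alpha_i|^2-1}{|\alpha_i|^2+1}=0$ and $\sum_i\tfrac{a_i}{|\alpha_i|^2+1}=0$. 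Under the map $\alpha\mapsto\tfrac{1}{|\alpha|^2+1}(2\Re\alpha,\,2\Im\alpha,\,|\alpha|^2-1)$ the $\alpha_i$ become points $P_i$ on the open hemisphere of $S^2$ with positive second coordinate, the real M\"obius action \eqref{isomuS} becomes the action of $\Sl_2(\RR)$ by hyperbolic isometries, and the two equations say exactly that the first and third coordinates of the centroid $\tfrac1r\sum_iP_i$ vanish, i.e. the centroid sits on the second coordinate axis (the point $\alpha=\im$). So I must produce a real M\"obius transformation bringing the centroid onto that axis, after which \eqref{nilso} holds for the resulting inner product.

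For the `only if' direction I would invoke Theorem \ref{nilthm}, which for nonsingular $\mu$ equates the existence of a nilsoliton with closedness of the $\Sl_m\times\Sl_2$-orbit. If some $k_{i_0}\geq 2$, the associated pencil block is a genuine Jordan block, and conjugating it by the one-parameter subgroup $\Diag(1,t,\dots,t^{k_{i_0}-1})$ (rescaled to stay in $\Sl_m\times\Sl_2$) spreads it, as $t\to 0$, into $k_{i_0}$ blocks of size one; the limit is isomorphic to the split algebra $\mu_{\sca_0}$ obtained from $\sca$ by replacing $(\alpha_{i_0},k_{i_0})$ with $k_{i_0}$ copies of $(\alpha_{i_0},1)$. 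Since $\sca_0$ and $\sca$ carry different pencil invariants they are non-isomorphic, so this nonzero limit lies in $\overline{\Sl_m\times\Sl_2\cdot\mu}\smallsetminus(\Sl_m\times\Sl_2\cdot\mu)$ and the orbit is not closed. I expect the two delicate points to be the hyperbolic center-of-mass step in the `if' direction, where one must guarantee that a suitable M\"obius transformation always exists (either by a convexity argument for the finite configuration $\{P_i\}$, or by reading it off the uniqueness statement in Theorem \ref{crit}, (iv)), and, in the `only if' direction, checking that the Jordan degeneration can be carried out inside $\Sl_m\times\Sl_2$ with a nonzero limit genuinely off the orbit.
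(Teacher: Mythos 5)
Your ``only if'' half is essentially the paper's argument: invoke Theorem \ref{nilthm} to reduce to non-closedness of the $\Sl_m\times\Sl_2$-orbit, and exhibit a diagonal one-parameter degeneration that splits a block $(\alpha_{i_0},k_{i_0})$ with $k_{i_0}\geq 2$ into smaller blocks, the limit being non-isomorphic by the pencil invariants; this is exactly Lemma \ref{deg} and Corollary \ref{degcor} (the paper sends $t\to\infty$ with $\Diag(e^t,e^t,e^{-t},e^{-t},\dots)$ rather than $t\to 0$ with $\Diag(1,t,\dots,t^{k-1})$, but it is the same degeneration). Your ``if'' half, however, takes a genuinely different route. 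The paper again goes through Theorem \ref{nilthm}: it proves that the orbit $\Sl_{4r}\times\Sl_2\cdot\mu_{\sca}$ is \emph{closed} by pushing any limit point through the Pfaffian map, using that the $\Sl_2$-orbit of a positive binary form is closed (Example \ref{binary}) to pin down the limit up to isomorphism, and then using Corollary \ref{degcor} to rule out a proper degeneration. You instead construct the distinguished inner product directly: the block computations $J(Z_1)^2=-|\alpha_i|^2I$, $J(Z_2)^2=-I$, $\tr\bigl(J(Z_1)J(Z_2)\bigr)=4a_i$ are correct, the block rescaling does reduce the left-hand identity of \eqref{nilso} to a normalization, and the two trace conditions do become the vanishing of the first and third coordinates of $\sum_iP_i$ for the spherical configuration you describe. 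Your approach buys an explicit nilsoliton inner product (and bypasses the orbit-closure bookkeeping), at the cost of a balancing lemma the paper never needs.

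That balancing step is where your argument has a genuine gap as written. Neither of your two suggested justifications delivers it. Theorem \ref{crit},(iv) is a \emph{uniqueness} statement and provides no existence; the existence statement you would need (``closed orbit meets the minimal vectors'') is only set up in Section \ref{git} for linear representations, and transferring it from $P_{2,2r}$ to the root configuration requires identifying the zero set of the Bombieri-norm moment map on binary forms with the balanced-roots condition, which is not immediate. As for the convexity route, note that your two equations are the vanishing of the \emph{Euclidean} centroid of the symmetric configuration $\{P_i,\overline{P_i}\}\subset S^2$ (the conformal barycenter condition), not the hyperbolic barycenter of the $\alpha_i$ as points of the upper half-plane: the weights are $(|\alpha_i|^2+1)^{-1}$, not $(\Im\alpha_i)^{-1}$, so minimizing $\sum_i\cosh d_{\HH}(\cdot,\alpha_i)$ gives the wrong equations. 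The statement is nevertheless true: the configuration $\{P_i,\overline{P_i}\}$ has no point of multiplicity greater than $r$ (equality forces all $\alpha_i$ equal, a case one balances by hand by sending $\alpha_1$ to $\im$), so the Douady--Earle potential on $\HH^3=\Sl_2(\CC)/\SU(2)$ is proper and geodesically convex, its unique minimum is preserved by the isometry induced by complex conjugation, hence lies in the fixed $\HH^2=\Sl_2(\RR)/\SO(2)$, and the balancing element can be taken in $\Gl_2(\RR)$. You would need to supply this argument (or an equivalent one) to close the proof; given how short the paper's orbit-closure argument is, that is the price of the more explicit construction.
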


We use the next two results to provide an alternative, much shorter proof of this corollary as an application of Theorem \ref{nilthm}.  According to such theorem, the problem of which nonsingular algebras of type $(2,m)$ admit a nilsoliton inner product can be solved by determining which $\Sl_m\times\Sl_2$-orbits are closed.

\begin{lemma}\label{deg}
If $\sca=\{ (\alpha,k)\}$ and $\sca'=\{(\alpha,1),\dots,(\alpha,1)\}$ ($k$ times), $\alpha\in\CC\setminus\RR$, then
$$
\mu_{\sca'}\in\overline{\Sl_{4k}\times\{ I_2\}\cdot\mu_{\sca}},
$$
where $I_2$ denotes the $2\times 2$ identity matrix.
\end{lemma}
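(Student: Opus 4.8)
The plan is to realize the statement as the standard degeneration of a single Jordan block to its semisimple part, phrased in terms of the congruence action on the maps $J_\mu$. Recall from Section \ref{pf} that for $\lambda=(\psi,I_2)\cdot\mu$ one has $J_\lambda(Z)=(\psi^{-1})^tJ_\mu(Z)\psi^{-1}$ for all $Z\in\ngo_2$, so acting by $\Sl_{4k}\times\{I_2\}$ amounts to a determinant-one congruence applied simultaneously to $J_\mu(Z_1)$ and $J_\mu(Z_2)$. As a first consistency check, note that by \eqref{PfmuS} the two algebras share the same Pfaffian form, $f_{\mu_{\sca}}=f_{\mu_{\sca'}}=((y-ax)^2+b^2x^2)^k$ (writing $\alpha=a+\im b$), which is exactly what orbit-closure containment under this action requires. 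First I would exhibit an explicit one-parameter subgroup $\psi_s\in\Sl_{4k}$ and show that $(\psi_s,I_2)\cdot\mu_{\sca}$ converges to $\mu_{\sca'}$ in $V_{2,4k}$.

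For the construction I would read off from \eqref{Jcomplex} that $J_{\mu_{\sca}}(xZ_1+yZ_2)$ is block-$k\times k$ with $4\times4$ blocks: its diagonal blocks are all equal to the $(\alpha,1)$-block
$$
B(x,y)=\left[\begin{smallmatrix} 0&0&-bx&y-ax\\ 0&0&y-ax&bx\\ bx&-(y-ax)&0&0\\ -(y-ax)&-bx&0&0\end{smallmatrix}\right],
$$
while the only other nonzero entries are the super- and sub-diagonal coupling blocks, which involve $x$ alone and form a single Jordan chain; deleting these couplings leaves precisely $J_{\mu_{\sca'}}=B\oplus\cdots\oplus B$ ($k$ copies). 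Crucially, every nonzero entry of $B$ pairs one of the first two coordinates of a block with one of its last two. I would therefore set $\psi_s^{-1}=\Diag(d_1,\dots,d_{4k})$, assigning to the $j$-th block the weights $(t^{k-j},t^{k-j},t^{-(k-j)},t^{-(k-j)})$ with $t=t(s)\to\infty$. Under the congruence $J\mapsto(\psi_s^{-1})^tJ\psi_s^{-1}$ each entry of every diagonal block $B$ is multiplied by $t^{k-j}t^{-(k-j)}=1$, and so is fixed along the whole curve, whereas each coupling entry joining block $j$ to block $j+1$ is multiplied by $t^{-(k-j)}t^{\,k-(j+1)}=t^{-1}\to0$. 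Moreover $\det\psi_s^{-1}=\prod_j t^{2(k-j)}t^{-2(k-j)}=1$, so $\psi_s\in\Sl_{4k}$ with no further normalization. Passing to the limit gives $(\psi_s,I_2)\cdot\mu_{\sca}\to\mu_{\sca'}$, as desired.

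The main point to get right is the geometry of the coupling blocks in \eqref{Jcomplex}: one must check that, compatibly with the pairing above, the couplings run strictly ``downhill'' in the block index (connecting the last two coordinates of block $j$ to the first two of block $j+1$), so that a single diagonal torus kills all of them simultaneously without any entry blowing up. This one-directional, single-chain structure is exactly what distinguishes the Jordan block $(\alpha,k)$ from its semisimple degeneration $(\alpha,1)^k$, and it is also what makes the determinant condition automatic. The remaining verifications are routine: the congruence is performed once and works for the coefficients of $x$ and of $y$ separately by linearity of $Z\mapsto J_\mu(Z)$, and the limit is then computed entry by entry.
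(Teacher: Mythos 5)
Correct, and essentially the paper's own argument: both exhibit a diagonal one-parameter subgroup of $\Sl_{4k}\times\{I_2\}$ whose congruence action fixes every entry of the $(\alpha,1)$-blocks of $J_{\mu_{\sca}}$ and multiplies the Jordan coupling entries by a factor tending to $0$, so that the curve converges to $\mu_{\sca'}$. The one point to be aware of is that the block-diagonal-plus-couplings picture $B\oplus\cdots\oplus B$ you describe is the matrix of \eqref{Jcomplex} only after a permutation of the basis of $\ngo_1$ (there $J_{\mu_{\sca}}=\left[\begin{smallmatrix}0&A\\-A^t&0\end{smallmatrix}\right]$ with the $(\alpha,1)$-data along the anti-diagonal of $A$); the paper glosses over the same point by identifying the limit with $\mu_{\sca'}$ ``up to permutation''.
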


\begin{proof}
Recall that the matrix $J_{\mu_{\sca}}(xZ_1+yZ_2)$ defining $\mu_{\sca}$ is made of a single $4k\times 4k$ block as in (\ref{Jcomplex}) for $\alpha=a+\im b$.  If we consider $\mu_t:=\vp_t\cdot\mu_{\sca}$, where
$$
\vp_t:=\Diag(e^{t},e^{t},e^{-t},e^{-t},\dots,e^{t},e^{t},e^{-t},e^{-t},1,1)\in\Sl_{4k}\times\{ I_2\},
$$
then it is easy to check that the upper right block of $J_{\mu_t}(xZ_1+yZ_2)$ is given for all $t$ by {\small
$$
\left[
\begin{array}{cccccccc}
         0&  &  &  &  &  & -bx & y-ax \\
         &  &  &  &  &  & y-ax & bx \\
         & &  &  & -bx & y-ax & 0 & e^{-2t}x \\
         & &  &  & y-ax & bx & e^{-2t}x & 0 \\
         & &  &  &  &  &  &  \\
         & & \udots &  & & \udots  &  &  \\
         & &  &  &  &  &  &  \\
      -bx& y-ax & 0 & e^{-2t}x &  &  &  &  \\
      y-ax & bx & e^{-2t}x & 0 &  &  &  & 0
   \end{array}
\right]
$$}
This implies that $\mu_t\to\mu_{\sca'}$, as $t\to\infty$, since in the limit we get a number of $k$ $4\times 4$ blocks of the form (\ref{Jcomplex}), up to permutation.  We therefore obtain that $\mu_{\sca'}\in\overline{\Sl_{4k}\times\{ I_2\}\cdot\mu_{\sca}}$, as was to be shown.
\end{proof}

The above proof can be easily modified in order to get the following more general result.

\begin{corollary}\label{degcor}
$\mu_{\sca'}\in\overline{\Sl_{4k}\times\{ I_2\}\cdot\mu_{\sca}}$ for any $\sca=\{(\alpha_1,k_1),\dots,(\alpha_r,k_r)\}$, $\alpha_i\in\CC\setminus\RR$, and $\sca'=\{(\alpha_1,j_{11}),\dots,(\alpha_1,j_{1s_1}),\dots,(\alpha_r,j_{r1}),\dots,(\alpha_r,j_{rs_r})\}$ such that
$$
\sum_{i=1}^{s_1} j_{1i} = k_1, \quad\dots\quad, \sum_{i=1}^{s_r} j_{ri} = k_r.
$$
\end{corollary}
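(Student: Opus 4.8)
The plan is to mimic exactly the degeneration constructed in the proof of Lemma \ref{deg}, but applied blockwise and with a finer choice of splitting exponents. Recall that $\mu_{\sca}$ is defined by a block-diagonal matrix $J_{\mu_{\sca}}(xZ_1+yZ_2)$, where each $(\alpha_i,k_i)$ contributes one $4k_i\times 4k_i$ block of the form \eqref{Jcomplex}. Since $\Sl_{4k}\times\{I_2\}$ contains the block-diagonal subgroup $\prod_i \Gl_{4k_i}$ intersected with determinant one (and each individual factor $\Sl_{4k_i}$ in particular), the degeneration can be carried out independently inside each block. Thus it suffices to treat a single $(\alpha,k)$ block and show that it degenerates to $\{(\alpha,j_1),\dots,(\alpha,j_s)\}$ with $\sum_i j_i=k$; the general statement then follows by taking the product of these block degenerations, and by rescaling by a single diagonal element of $\{I_2\}\times\Sl$ if the separate determinants do not individually equal one.

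First I would fix a single pair $(\alpha,k)$ with $\alpha=a+\im b$ and a partition $k=j_1+\dots+j_s$, and write down the analogue of the diagonal element $\vp_t$ from Lemma \ref{deg}. In that lemma the choice $\vp_t=\Diag(e^t,e^t,e^{-t},e^{-t},\dots,1,1)$ pushes the $e^{-2t}x$ entries linking consecutive $4\times 4$ sub-blocks to zero, splitting the single $4k\times 4k$ block into $k$ separate $(\alpha,1)$-blocks in the limit. The key observation is that this same one-parameter group, suitably truncated, produces a coarser splitting: to obtain blocks of sizes $4j_1,\dots,4j_s$ I would use a diagonal $\vp_t$ that keeps the coupling entries \emph{within} each desired sub-block of size $4j_i$ untouched (exponent zero there) while sending to zero only the $s-1$ coupling entries that sit at the boundaries between consecutive target sub-blocks. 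Concretely, one assigns exponents so that $e^{-2t}$ multiplies precisely the off-diagonal $x$-entries straddling the cut-points $4j_1, 4(j_1+j_2),\dots$, and leaves the interior coupling entries as plain $x$.

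The main obstacle is purely bookkeeping: verifying that a single diagonal $\vp_t\in\Sl_{4k}\times\{I_2\}$ can simultaneously (a) fix all the structurally essential entries $-bx$, $bx$, $y-ax$ and the interior $x$-couplings within each target sub-block, and (b) decay exactly the boundary couplings, all while maintaining $\det\vp_t=1$. This is a matter of solving a small linear system for the diagonal exponents, and it works because the action \eqref{actionG} scales the $ij$-entry of $J_\mu$ by the factor $e^{-(d_i+d_j)t}$ where $d_i$ is the exponent on the $i$-th coordinate; one only needs the boundary-coupling index pairs to have strictly positive total exponent and every retained entry to have total exponent zero, which the block structure of \eqref{Jcomplex} readily accommodates since the retained entries already pair coordinates symmetrically. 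Once this is checked for one block, I would assemble the full $\vp_t$ as the block-diagonal juxtaposition of the per-block exponent patterns, confirm $\mu_t:=\vp_t\cdot\mu_{\sca}\to\mu_{\sca'}$ entrywise as $t\to\infty$ (the limit being block-diagonal with the prescribed $(\alpha_i,j_{i\ell})$ blocks, up to the permutation of rows and columns already noted in Lemma \ref{deg}), and normalize the determinant so that $\vp_t\in\Sl_{4k}\times\{I_2\}$ for all $t$. This yields $\mu_{\sca'}\in\overline{\Sl_{4k}\times\{I_2\}\cdot\mu_{\sca}}$, as claimed.
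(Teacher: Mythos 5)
Your proposal is correct and matches the paper's intent exactly: the paper gives no separate argument for this corollary, stating only that the proof of Lemma \ref{deg} ``can be easily modified,'' and your modification --- a diagonal one-parameter subgroup, assembled blockwise, whose exponents sum to zero on the entries to be retained and are strictly positive on the coupling entries straddling the cut-points $4j_{i1}, 4(j_{i1}+j_{i2}),\dots$ --- is precisely that modification. The bookkeeping you defer does work out as you predict, since within each $4k_i\times 4k_i$ block the retained entries force the exponents to come in $\pm$ pairs (so the determinant condition is automatic) while leaving one free parameter per $4\times 4$ unit, and the boundary-decay conditions only require these parameters to increase across the chosen cuts.
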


\begin{proof}[Proof of Corollary \ref{nil2m}]
If $k_i\geq 2$ for some $i$, then it follows from Corollary \ref{degcor} that the orbit $\Sl_m\times\Sl_2\cdot\mu_{\sca}$ is not closed, and thus $\mu_{\sca}$ does not admit a nilsoliton inner product by Theorem \ref{nilthm}.

Conversely, let $\mu_{\sca}\in V_{2,4r}$ be a nonsingular algebra with $\sca=\{(\alpha_1,1),\dots,(\alpha_r,1)\}$, $\alpha_i\in\CC\setminus\RR$.  According to Theorem \ref{nilthm}, it is enough to show that the orbit $\Sl_{4r}\times\Sl_2\cdot\mu_{\sca}$ is closed in $V_{2,4r}$.  Assume that $\mu_k:=(\psi_k,\vp_k)\cdot\mu_{\sca}\to\lambda\in V_{2,4r}$, as $k\to\infty$, with $(\psi_k,\vp_k)\in\Sl_{4r}\times\Sl_2$.  Thus their Pfaffian forms $f_{\mu_k}\to f_\lambda$ and are given by $f_{\mu_k}=f_{\mu_{\sca}}\circ\vp_k^t\in\Sl_2\cdot f_{\mu_{\sca}}$ for all $k$, from which it follows that $f_\lambda\in\Sl_2\cdot f_{\mu_{\sca}}$ since the $\Sl_2$-orbit of any positive binary form is closed (see the discussion following Example \ref{binary}).

In particular, $\lambda$ is also nonsingular.  Moreover, it follows from \eqref{PfmuS} and the equivalence between $f_\lambda$ and $f_{\mu_{\sca}}=\prod_{i=1}^r(y-\alpha_ix)(y-\overline{\alpha_i}x)$ that $\lambda$ must be isomorphic to an algebra $\mu_{\sca'}$ for some set $\sca'$ of the form
$$
\sca'=\{(\beta_1,k_1),\dots,(\beta_s,k_s)\},
$$
such that for any fixed $j=1,\dots,r$,
$$
\sum_{\beta_i=\alpha_j} k_i=|\{\alpha_i:\alpha_i=\alpha_j\}|.
$$
It is easy to see by using Corollary \ref{degcor} that $\mu_{\sca}\in\overline{\Sl_{4r}\times\Sl_2\cdot\lambda}$, and since $\lambda\in\overline{\Sl_{4r}\times\Sl_2\cdot\mu_{\sca}}$, we obtain that $\lambda\in\Sl_{4r}\times\Sl_2\cdot\mu_{\sca}$.  This implies that $\Sl_{4r}\times\Sl_2\cdot\mu_{\sca}$ is indeed closed, as was to be shown.
\end{proof}

The next result follows from Corollaries \ref{degcor} and \ref{nil2m}.

\begin{proposition}
For any given positive binary form $f\in P_{2,d}$, there exists a unique nonsingular algebra $\mu\in V_{2,2d}$ (up to isomorphism) with Pfaffian form $f_\mu=f$ admitting a nilsoliton inner product.
\end{proposition}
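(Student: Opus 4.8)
The plan is to reduce everything to the combinatorial description of nonsingular algebras of type $(2,m)$ from Section \ref{2m} together with the nilsoliton criterion of Corollary \ref{nil2m}, reading off both existence and uniqueness from the Pfaffian formula \eqref{PfmuS}. Since $f\in P_{2,d}$ is positive, $d$ is even and, as recalled in Example \ref{binary}, $f$ factors over $\RR$ into exactly $d/2$ definite quadratic factors,
$$
f(x,y)=c\prod_{i=1}^{d/2}\left((y-a_ix)^2+b_i^2x^2\right),\qquad \alpha_i:=a_i+\im b_i\in\CC\smallsetminus\RR,
$$
with the $\alpha_i$ possibly repeated and $c>0$. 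First I would take $\sca_0:=\{(\alpha_1,1),\dots,(\alpha_{d/2},1)\}$ and form $\mu_{\sca_0}\in V_{2,2d}$ (each pair contributes a $4\times 4$ block, so $m=4\cdot\tfrac{d}{2}=2d$), which is nonsingular because all $\alpha_i$ are non-real (see \eqref{PfmuS} and Definition \ref{ns}); by \eqref{PfmuS} its Pfaffian form is $\prod_i((y-a_ix)^2+b_i^2x^2)$, so after rescaling (using $f_{(\psi,\vp)\cdot\mu}=(\det\psi)^{-1}f_\mu\circ\vp^t$ to absorb the constant $c$) we get $f_{\mu_{\sca_0}}=f$. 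Since every multiplicity in $\sca_0$ equals $1$, Corollary \ref{nil2m} guarantees that $\mu_{\sca_0}$ admits a nilsoliton inner product, settling existence.

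For uniqueness, let $\mu\in V_{2,2d}$ be any nonsingular algebra with $f_\mu=f$ that admits a nilsoliton. By the classification recalled in Section \ref{2m}, $\mu\cong\mu_{\sca}$ for some $\sca=\{(\beta_1,l_1),\dots,(\beta_s,l_s)\}$ with $\beta_j\in\CC\smallsetminus\RR$, and Corollary \ref{nil2m} forces $l_1=\dots=l_s=1$. Formula \eqref{PfmuS} then shows that the multiset of factors of $f_{\mu_{\sca}}$ is exactly $\{(y-\beta_jx)(y-\overline{\beta_j}x)\}_j$, which must agree with that of $f$; hence the multiset $\{\beta_j\}$ coincides, up to the real M\"obius ambiguity built into \eqref{isomuS}, with $\{\alpha_i\}$. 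The isomorphism criterion for type $(2,m)$ then yields $\mu_{\sca}\cong\mu_{\sca_0}$, and therefore $\mu\cong\mu_{\sca_0}$. Corollary \ref{degcor} enters here to make the comparison clean: it shows that any algebra whose set groups the same roots with higher multiplicities degenerates onto $\mu_{\sca_0}$, so that $\mu_{\sca_0}$ is the unique closed $\Sl_{2d}\times\Sl_2$-orbit among all nonsingular algebras with Pfaffian form $f$, equivalently the unique nilsoliton representative by Theorem \ref{nilthm}.

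The main obstacle I anticipate is purely bookkeeping about the two different kinds of multiplicity present: the multiplicity $l_i$ inside a single pair $(\beta_i,l_i)$ and the number of times a pair is repeated in $\sca$. Both contribute equally to the Pfaffian via \eqref{PfmuS} (compare $\{(\im,1),(\im,1)\}$ versus $\{(\im,2)\}$, which share the Pfaffian $(x^2+y^2)^2$ but are non-isomorphic), so the Pfaffian alone does not separate isomorphism classes; the nilsoliton hypothesis is precisely what removes the $l_i\geq 2$ possibilities through Corollary \ref{nil2m} and singles out the fully split $\sca_0$. The only other point requiring care is the passage between ``$f_\mu=f$ on the nose'' and the projective equivalence used in the classification, which is handled by the scaling identity $f_{(\psi,\vp)\cdot\mu}=(\det\psi)^{-1}f_\mu\circ\vp^t$ together with the fact that positive binary forms have closed $\Sl_2$-orbits (Example \ref{binary}).
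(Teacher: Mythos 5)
Your proof is correct and follows essentially the route the paper intends: the paper gives no detailed argument, stating only that the proposition ``follows from Corollaries \ref{degcor} and \ref{nil2m}'', and your combination of Corollary \ref{nil2m} (to force all multiplicities equal to $1$), formula \eqref{PfmuS} with the M\"obius isomorphism criterion (to match root multisets), and Corollary \ref{degcor}/closed-orbit uniqueness is exactly that argument spelled out. Your closing remark correctly identifies the one genuine subtlety, namely that the Pfaffian alone cannot distinguish $\{(\im,1),(\im,1)\}$ from $\{(\im,2)\}$ and that the nilsoliton hypothesis is what eliminates the latter.
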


We will see in the next section that this is no longer true for ternary quartics.

We now use Theorem \ref{nilYN} to get explicit continuous families of pairwise non-isomorphic $2$-step nilpotent Lie algebras of type $(2,m)$ which do not admit nilsoliton metrics.  Our examples cover any dimension $\geq 14$ and dimension $12$.  Families of this kind have been exhibited by M. Jablonski in \cite{Jbl} for any dimension $\geq 24$, and in the $3$-step nilpotent case in any dimension $\geq 8$ by T. Payne in \cite{Pyn} (see also \cite{Osc1}).

\begin{proposition}\label{curves2m}
For any $r\geq 1$, each of the following sets $\sca_t$, $t>1$, provides a one-parameter family $\mu_{\sca_t}$ of pairwise non-isomorphic indecomposable $2$-step algebras of type $(2,m)$ which do not admit any nilsoliton metric:
\begin{itemize}
\item $m=8+2r$, $\mathcal{S}_t=\{(\im t,1),(0,2),\overbrace{(1,1),\dots,(1,1)}^r \}$.

\item $m=11+2r$, $\mathcal{S}_t=\{(\im t,1),(0,2),\overbrace{(1,1),\dots,(1,1)}^r,1 \}$.


\item $m=8+4r$, $\mathcal{S}_t=\{(\im t,2),\overbrace{(\im,1),\dots,(\im,1)}^r \}$.

\item $m=8+4r+5$, $\mathcal{S}_t=\{(\im t,2),\overbrace{(\im,1),\dots,(\im,1)}^{r},(0,1),1\}$.

\item $m=8+4r+2$, $\mathcal{S}_t=\{(\im t,2),\overbrace{(\im,1),\dots,(\im,1)}^r,(0,1)\}$.

\item $m=8+4r+3$, $\mathcal{S}_t=\{(\im t,2),\overbrace{(\im,1),\dots,(\im,1)}^r,1\}$.
\end{itemize}
\end{proposition}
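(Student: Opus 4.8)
We must establish three things for each listed family: that $\mu_{\sca_t}$ admits no nilsoliton, that the members are pairwise non-isomorphic, and that each is indecomposable. The first is immediate from Theorem~\ref{nilYN}. Every $\sca_t$ contains a complex pair (some $(\im t,\cdot)$), so the number of complex blocks is positive and the generic case of that theorem applies; its first condition demands that all complex and all real multiplicities equal $1$, yet each family carries a block of multiplicity $2$ — the real block $(0,2)$ in the first two families, the complex block $(\im t,2)$ in the last four — so the criterion fails and no $\mu_{\sca_t}$ admits a nilsoliton inner product, uniformly in $t$.

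For the non-isomorphism I would invoke the pencil classification recalled in Section~\ref{2m}: an isomorphism $\mu_{\sca_t}\cong\mu_{\sca_s}$ forces the $\epsilon$-data to coincide (which it does, being independent of $t$) and the existence of a real M\"obius transformation $T$ as in \eqref{isomuS} carrying the weighted set $\{(\alpha_i,k_i)\}$ of $\sca_t$ onto that of $\sca_s$ while preserving each multiplicity $k_i$. Representing the complex points in the upper half-plane $\HH$, such transformations act as isometries of the hyperbolic metric. In the four families containing $(\im t,2)$ the two complex points $\im t$ (multiplicity $2$) and $\im$ (multiplicity $1$, present because $r\geq1$) are canonically singled out by their multiplicities; any realizing $T$ must send $\im t\mapsto\im s$ and $\im\mapsto\im$, so preservation of hyperbolic distance gives $\ln t=d_{\HH}(\im t,\im)=d_{\HH}(\im s,\im)=\ln s$ and hence $t=s$.

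In the two families with a single complex point $\im t$ of multiplicity $1$, the position of $\im t$ must instead be pinned against the distinguished real ideal points $0$ (the unique multiplicity-$2$ real value) and $1$ (the multiplicity-$1$ real value). A realizing $T$ preserves multiplicities, so $T(0)=0$ and $T(1)=1$; writing $Tz=\tfrac{az+b}{cz+d}$ this forces $b=0$ and $a=c+d$, and imposing that $T(\im t)$ be purely imaginary (so as to meet $\pm\im s$) yields $ac\,t^{2}=0$. Since $a\neq0$, we get $c=0$, whence $T=\mathrm{id}$ and $t=s$; equivalently, the hyperbolic distance from $\im t$ to the geodesic joining $0$ and $1$ is strictly monotone in $t$ on $(1,\infty)$. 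I expect this case to be the main obstacle: unlike the four-block families there is no pair of marked interior points to compare, so the separating invariant has to be manufactured from one interior point against two boundary points, and its monotonicity verified.

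Finally, for indecomposability, note first that none of the blocks occurring here contains a vector in $\Ker J_{\mu}(Z_1)\cap\Ker J_{\mu}(Z_2)$, so $\mu_{\sca_t}$ has no abelian factor and its center equals $\ngo_2=[\ngo,\ngo]$. A nontrivial direct-sum decomposition into ideals would therefore split $\ngo_2$ into two complementary one-dimensional derived algebras; choosing a basis $W_1,W_2$ of $\ngo_2$ adapted to this splitting makes the defining pencil block-diagonal on the corresponding $\ngo_1'\oplus\ngo_1''$, with one diagonal block depending only on the coefficient of $W_1$ and the other only on that of $W_2$. The Kronecker form of such a pencil consists exclusively of size-one Heisenberg blocks $(a,1)$, $a\in\RR\cup\{\infty\}$. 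Since each $\sca_t$ contains a complex block (and a block of multiplicity $\geq2$), which is not of this type, no such decomposition exists and $\mu_{\sca_t}$ is indecomposable (cf. \cite{Ggr,LvsTrb}).
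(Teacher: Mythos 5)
Your proposal is correct and follows essentially the same route as the paper's proof: non-existence of nilsolitons read off from Theorem \ref{nilYN} via the multiplicity-$2$ block, and pairwise non-isomorphism via the M\"obius description of isomorphism from Section \ref{2m}, including the same explicit computation (fixing $0$ and $1$, forcing $c=0$) for the families containing $(0,2)$ that the paper chooses to illustrate. The only difference is that you additionally argue indecomposability (no abelian factor plus the fact that a decomposable type-$(2,m)$ algebra has pencil data consisting only of two distinct multiplicity-one real blocks), a point the paper's proof leaves implicit; your hyperbolic-distance argument for the $(\im t,2)$ families is a clean way to carry out what the paper dismisses as ``elementary properties of M\"obius transformations.''
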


\begin{proof}
In all cases, the non-existence of a nilsoliton follows at once by applying Theorem \ref{nilYN}.

We illustrate how one proves pairwise non-isomorphisms for these cases.  Suppose that two algebras $\mu_{\sca_t}$, $\mu_{\sca_u}$ in the first item are isomorphic.  Thus there exists a M\"obius transformation $T$ as in \eqref{isomuS} such that $T\{0,1,t\im\}=\{0,1,u\im\}$.  If $T0=0$ and $T1=1$, then $b=0$, $a=c+d$ and $a,d\ne 0$, from which it easily follows that $T\im t=\im u$ if and only if $t=u$.  If $T0=1$ and $T1=0$ then one can obtain $t=u$ analogously.

In all the remaining cases, the pairwise non-isomorphism can be easily deduced in much the same way from elementary properties of M\"obius transformations.
\end{proof}

By arguing exactly as above, one can also easily obtain $k$-parameter families of pairwise non-isomorphic indecomposable $2$-step algebras of type $(2,m)$ which do not admit any nilsoliton metric for any $k\geq 1$, even in the nonsingular case.  For example, take $\mu_{\sca_{t_1,\dots,t_k}}$ for
$$
\mathcal{S}_{t_1,\dots,t_k}=\{(\im,2),(\im t_1,1),\dots,(\im t_k,1)\}, \qquad 1\leq t_1\leq\dots\leq t_k,
$$
in which case $m=8+4k$.

\section{Nonsingular algebras of type $(3,8)$}\label{38}

In view of the results obtained in Section \ref{2m}, we have by now a quite clear picture of the nonsingular algebras of type $(2,m)$, including those admitting a nilsoliton metric.  Also, it is well known that there is only one nonsingular algebra of type $(3,4)$ (namely, the quaternionic $H$-type algebra $\ngo=\HH\oplus\Im\HH$).  The next type to study in degree of difficulty is therefore $(3,8)$, which is the aim of this section.

A first problem we have to face in type $(3,8)$ is that the Pfaffian forms are ternary quartics, a very subtle topic from many points of view.  The classification of ternary quartics up to equivalence is an open classical problem in invariant theory, even the ring of invariants $\CC[P_{3,4}]^{\Sl_3(\CC)}$ is not yet completely understood.  Over the real numbers, it may be inferred that an explicit classification for positive ternary quartics is hopeless.  According to a result in \cite{Osc2}, there is at least one nonsingular $\mu\in V_{3,8}^+$ with Pfaffian form a given $f\in P_{3,4}^+$, so that in some sense, a classification of nonsingular algebras of type $(3,8)$ can also be considered out of reach.  One may get an astounding indication of this by looking at the following existence result due to J. Heber (see \cite[Theorem 6.19]{Hbr}): there exists a $12$-parameter continuous family of pairwise non-isomorphic $2$-step algebras of type $(3,8)$ around the $H$-type one, which are all nonsingular in a neighborhood by continuity.  Moreover, they all admit a nilsoliton metric.

\subsection{Existence of nilsolitons}
The following three propositions provide explicit examples of continuous families of nonsingular algebras admitting a nilsoliton inner product with different types of Pfaffian forms.  The existence of a nilsoliton follows in all cases from Lemma \ref{uniform}, as they are all uniform algebras.  The computation of the Pfaffian forms is straightforward, and in Propositions \ref{1-par} and \ref{1-par2}, the fact that the algebras in the families are pairwise non-isomorphic follows from Examples \ref{1-parlap} and \ref{exinvform}, respectively.

\begin{proposition}\label{3-par}
Let $\mu_{t_1,t_2,t_3}\in V_{3,8}$ be the Lie algebra defined by
$$
J_\mu(xZ_1+yZ_2+zZ_3)=
\left[ \begin {smallmatrix} 0&0&x&0&y&0&z&0\\0&0
&0&t_{{1}}x&0&t_{{2}}y&0&t_{{3}}z\\-x&0&0&0&z&0&-y&0
\\0&-t_{{1}}x&0&0&0&z&0&-y\\-y&0&-
z&0&0&0&x&0\\0&-t_{{2}}y&0&-z&0&0&0&x
\\-z&0&y&0&-x&0&0&0\\0&-t_{{3}}z&0
&y&0&-x&0&0\end {smallmatrix} \right].
$$
Then $\mu_{t_1,t_2,t_3}$ with
$$
t_1^2+t_2^2+t_3^2=1,\quad 0<t_3\leq t_2\leq t_1,
$$
is a $2$-parameter family of pairwise non-isomorphic nonsingular algebras admitting nilsolitons with Pfaffian forms given by
$$
f_{\mu_{t_1,t_2,t_3}}(x,y,z)=(x^2+y^2+z^2)(t_1x^2+t_2y^2+t_3z^2).
$$
\end{proposition}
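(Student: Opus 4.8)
The plan is to verify the four assertions in turn---nonsingularity, the shape of the Pfaffian form, the existence of a nilsoliton, and the pairwise non-isomorphism---with essentially all of the difficulty concentrated in the last one. I would first compute the Pfaffian directly from the block structure of $J_\mu(xZ_1+yZ_2+zZ_3)$ (or compute $\det J_\mu(Z)$ and extract the positive square root, using $\det J_\mu(Z)=f_\mu(Z)^2$), obtaining
$$
f_\mu(x,y,z)=(x^2+y^2+z^2)(t_1x^2+t_2y^2+t_3z^2).
$$
This is routine; a quick check is that $J_\mu(Z_1)^2=\Diag(-1,-t_1^2,-1,-t_1^2,-1,-1,-1,-1)$, so $\det J_\mu(Z_1)=t_1^2=f_\mu(1,0,0)^2$. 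Since $t_1,t_2,t_3>0$, the form $f_\mu$ is a product of two positive definite quadratics, hence a positive polynomial, and the criterion of Section \ref{pp} immediately gives that $\mu_{t_1,t_2,t_3}$ is nonsingular.

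For the nilsoliton I would appeal to Lemma \ref{uniform}. One checks that $\{X_1,\dots,X_8,Z_1,Z_2,Z_3\}$ is a nice basis: every entry of $J_\mu(xZ_1+yZ_2+zZ_3)$ is a scalar multiple of exactly one of $x,y,z$, and in each row the three nonzero entries involve the three distinct variables, so no variable is repeated in any row (hence, by skew-symmetry, in any column). In particular every row and column carries exactly three nonzero entries, so $\mu_{t_1,t_2,t_3}$ is uniform, and Lemma \ref{uniform} provides a nilsoliton inner product independently of the actual values of the $t_i$.

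The core of the statement is the pairwise non-isomorphism, for which the Pfaffian form is the principal invariant: an isomorphism $\mu_{t_1,t_2,t_3}\cong\mu_{s_1,s_2,s_3}$ forces $f_{\mu_{t}}$ and $f_{\mu_{s}}$ to be projectively equivalent. I would read off this equivalence from the pencil of conics spanned by the two quadratic factors. Over $\CC$, and whenever the $t_i$ are not all equal, the quartic $f_\mu=0$ is the union of the two distinct smooth conics $\{x^2+y^2+z^2=0\}$ and $\{t_1x^2+t_2y^2+t_3z^2=0\}$; this decomposition into irreducible components is unique, so a projective equivalence must match the unordered pairs of conics. Simultaneously diagonalizing, the associated projective invariant is the unordered triple of generalized eigenvalues $\{t_1,t_2,t_3\}$ of one factor relative to the other, taken modulo a common scaling and modulo the inversion $t_i\mapsto t_i^{-1}$ that records the freedom to interchange the two conics. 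The normalization $t_1^2+t_2^2+t_3^2=1$ fixes the scaling and the ordering $0<t_3\le t_2\le t_1$ fixes the permutation, so two members of the family have projectively equivalent Pfaffians exactly when their parameter triples either coincide or are exchanged by inversion.

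The main obstacle is precisely this inversion symmetry. Applying $\varphi=\Diag(t_1^{-1/2},t_2^{-1/2},t_3^{-1/2})$ gives $f_\mu\circ\varphi=(x^2+y^2+z^2)(t_1^{-1}x^2+t_2^{-1}y^2+t_3^{-1}z^2)$, so no invariant that factors through the projective class of $f_\mu$---in particular none of the $\Sl_3$- or $\Or(3)$-invariants used in Examples \ref{exinvform} and \ref{1-parlap}, which depend only on the $\Gl_3$-orbit---can separate a parameter triple from its renormalized, reordered inverse $(t_3^{-1},t_2^{-1},t_1^{-1})$; these inverse pairs form a genuine involution of the parameter region with one-dimensional fixed locus $\{t_1t_3=t_2^2\}$. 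For all non-inverse pairs the previous paragraph already yields non-isomorphism. To settle the inverse pairs one must invoke an invariant of the algebra that does not descend to $f_\mu$, and the natural choice is the distinguished quadratic form on $\ngo_2$: either the $\Aut(\mu)$-invariant inner product of \cite{KplTrb}, or, concretely, the form $Z\mapsto-\tr J_\mu(Z)^2$ read off in the nilsoliton gauge. The pair consisting of this canonical quadratic form together with $f_\mu$ carries strictly more information than $f_\mu$ alone, and its position relative to the two factors of $f_\mu$ is no longer inversion-symmetric, which should upgrade $\{t_1,t_2,t_3\}$ to a complete invariant up to scaling; alternatively one can attempt to obstruct directly the existence of a $\psi\in\Gl_8$ making $(\psi,\varphi)$ a Lie algebra isomorphism between an inverse pair. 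Carrying out this secondary computation and confirming that it genuinely breaks the $t_i\mapsto t_i^{-1}$ symmetry is the delicate point on which the word ``pairwise'' in the statement rests.
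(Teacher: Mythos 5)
Your verification of nonsingularity, of the Pfaffian form, and of the existence of a nilsoliton (nice basis, uniformity, Lemma \ref{uniform}) is exactly the paper's route. For the pairwise non-isomorphism the paper also works purely with the Pfaffian: it shows that $f_{\mu_{t_1,t_2,t_3}}\simeq f_{\mu_{s_1,s_2,s_3}}$ if and only if there are $c>0$ and $\sigma\in\mathrm{S}_3$ with $t_i=cs_{\sigma(i)}$ for all $i$ or $t_i=c/s_{\sigma(i)}$ for all $i$, and then asserts that neither alternative can occur for two distinct triples in the normalized ordered region. You are right --- and the paper is wrong --- that the second alternative does occur: for any $t$ the decreasingly reordered, renormalized triple $s$ with $s_i$ proportional to $1/t_i$ lies in the same region, satisfies $t_i=c/s_{\sigma(i)}$, and differs from $t$ unless $t_1t_3=t_2^2$ (e.g.\ $t=(2,2,1)/3$ versus $s=(2,1,1)/\sqrt{6}$). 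So the inversion symmetry you isolate is a genuine gap in the paper's own proof, not merely in your write-up.

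Unfortunately the gap cannot be closed by a finer invariant, because the inverse pairs are actually isomorphic. Reordering the basis as $X_1,X_3,X_5,X_7,X_2,X_4,X_6,X_8$ puts $J_{\mu_t}(Z)$ in block-diagonal form $\Diag\bigl(A(Z),B(Z)\bigr)$ with two $4\times4$ blocks, where $A$ is the quaternionic $H$-type structure ($A(Z)^2=-\|Z\|^2I$, $\Pf A(Z)=\|Z\|^2$) and $\Pf B(Z)=t_1x^2+t_2y^2+t_3z^2$. A direct computation gives $B(Z)=\psi^{t}A(D^{1/2}Z)\psi$ with $D=\Diag(t_1,t_2,t_3)$ and $\psi=\Diag\bigl((t_1t_2t_3)^{1/4},(t_1/t_2t_3)^{1/4},(t_2/t_1t_3)^{1/4},(t_3/t_1t_2)^{1/4}\bigr)$, so $\mu_{t_1,t_2,t_3}$ is a central amalgam of two copies of the quaternionic $H$-type algebra glued along their centers by $D^{1/2}$. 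Swapping the two copies --- concretely, applying \eqref{actionG} with $\vp=D^{-1/2}$ and with $\psi^{-1}=\left[\begin{smallmatrix}0&c^{1/4}\psi'\\ \psi^{-1}&0\end{smallmatrix}\right]$, where $\psi'$ is the analogous diagonal matrix for the triple $(c/t_1,c/t_2,c/t_3)$ --- carries $\mu_{t_1,t_2,t_3}$ onto $\mu_{c/t_1,c/t_2,c/t_3}$. Hence the map $t\mapsto[\mu_t]$ is two-to-one off the surface $t_1t_3=t_2^2$, no invariant can separate an inverse pair, and the statement only becomes correct after shrinking the parameter region, say to $t_2^2\le t_1t_3$. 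Your instinct to treat the inverse pairs separately was the right one; the resolution is that they must be identified, not distinguished.
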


\begin{proof}
It only remains to prove that they are pairwise non-isomorphic, which we shall do via their Pfaffian forms.  We use the fact that a projective equivalence map between $f_{\mu_{t_1,t_2,t_3}}$ and $f_{\mu_{s_1,s_2,s_3}}$ must either preserve or interchange their irreducible factors and that the isotropy subgroup of $x^2+y^2+z^2$ is $\Or(3)$.  It is therefore easy to prove that for $t_i,s_i>0$, $i=1,2,3$, we have that $f_{\mu_{t_1,t_2,t_3}}\simeq f_{\mu_{s_1,s_2,s_3}}$
if and only if there exist $c>0$ and a permutation $\sigma\in\textrm{S}_3$ such that
either $t_i=cs_{\sigma(i)}$ or $t_i=\frac{c}{s_{\sigma(i)}}$ for all $i=1,2,3$, which is impossible for two different triples satisfying the conditions required by the proposition.
\end{proof}

The algebra $\mu_{t_1,t_2,t_3}$ is isomorphic to the $H$-type algebra $\ngo=\HH\oplus\HH\oplus\Im\HH$ if and only if $t_1=t_2=t_3=1/\sqrt{3}$.

\begin{proposition}\label{1-par}
Let $\mu_t\in V_{3,8}$ be defined by
$$
J_\mu(xZ_1+yZ_2+zZ_3)=\left[\begin{smallmatrix}0&0&ax&0&0&y&0&z\\0&0&0&bx&y&0&-z&0\\-ax&0&0&0&0&z&y&0\\0&-bx&0&0&z&0&0&y\\0&-y&0&-z&0&0&0&x\\
-y&0&-z&0&0&0&x&0\\0&z&-y&0&0&-x&0&0\\-z&0&0&-y&-x&0&0&0\end{smallmatrix}\right],
$$
where $a=\tfrac{-t-\sqrt{t^2-4}}{2}$, $b=\tfrac{-t+\sqrt{t^2-4}}{2}$.  Then $\mu_t$ with $t\geq 2$ is a $1$-parameter family of pairwise non-isomorphic nonsingular algebras admitting nilsolitons with Pfaffian forms given by
$$
f_{\mu_t}(x,y,z)={x}^{4}+{y}^{4}+{z}^{4}+t{x}^{2}{y}^{2}.
$$
\end{proposition}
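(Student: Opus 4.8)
The plan is to assemble three ingredients, each of which is either a one-line verification or an appeal to a result proved earlier. First I would confirm that $\{X_1,\dots,X_8,Z_1,Z_2,Z_3\}$ is a uniform basis for $\mu_t$. Reading off the displayed matrix $J_{\mu_t}(xZ_1+yZ_2+zZ_3)$, every entry is a scalar multiple of a single variable $x$, $y$ or $z$, no variable is repeated within a row, and each of the eight rows has exactly three nonzero entries. Hence $\mu_t$ is uniform in the sense of the paragraph preceding Lemma \ref{uniform}, and that lemma immediately yields a nilsoliton inner product for every $t\geq 2$; note that $a$ and $b$ are real precisely when $t\geq 2$, so the algebra is genuinely defined over $\RR$ in this range.

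Next I would compute the Pfaffian form, where the only real work of the proof lies. The scalars $a=\tfrac{-t-\sqrt{t^2-4}}{2}$ and $b=\tfrac{-t+\sqrt{t^2-4}}{2}$ are the two roots of $w^2+tw+1=0$, so $a+b=-t$ and $ab=1$, and these two relations are exactly what forces the coefficients of $f_{\mu_t}$. Setting $y=z=0$, the matrix decomposes (after reordering coordinates) into the four $2\times 2$ skew blocks supported on the index pairs $(1,3),(2,4),(5,8),(6,7)$, with off-diagonal entries $ax,bx,x,x$; thus $\det J_{\mu_t}(xZ_1)=a^2b^2x^8=x^8$ and the pure $x^4$ coefficient of $f_{\mu_t}$ equals $ab=1$. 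The $y^4$ and $z^4$ coefficients arise symmetrically, and expanding the full Pfaffian while substituting $a+b=-t$ should leave exactly the surviving cross term $tx^2y^2$ and annihilate every other mixed monomial, giving $f_{\mu_t}(x,y,z)=x^4+y^4+z^4+tx^2y^2$.

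With the Pfaffian established, the two remaining assertions follow formally. For $t\geq 2$ one has $tx^2y^2\geq 0$, so $f_{\mu_t}\geq x^4+y^4+z^4>0$ on $\RR^3\smallsetminus\{0\}$; thus $f_{\mu_t}$ is a positive polynomial and $\mu_t$ is nonsingular by the third condition of Definition \ref{ns}, equivalently by the positivity criterion of Section \ref{pp}. For the pairwise non-isomorphism, recall from Section \ref{pf} that the projective equivalence class of the Pfaffian form is an isomorphism invariant, so an isomorphism $\mu_t\cong\mu_s$ would force $f_{\mu_t}\simeq f_{\mu_s}$; but Example \ref{1-parlap} shows that the quartics $x^4+y^4+z^4+tx^2y^2$ are pairwise non-equivalent for $t\geq 2$, whence $\mu_t\not\cong\mu_s$ for $t\neq s$.

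I expect the Pfaffian computation of the second paragraph to be the main, if essentially routine, obstacle: verifying that all the unwanted mixed monomials cancel is where one must actually use the structure of the matrix rather than just book-keeping. The reason it works is the deliberate choice of $a$ and $b$ as reciprocal roots of $w^2+tw+1$, with $ab=1$ normalizing the three pure quartic coefficients to $1$ and $a+b=-t$ producing precisely the prescribed cross term.
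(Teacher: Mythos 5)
Your proposal is correct and follows exactly the route the paper takes: nilsoliton existence via uniformity and Lemma \ref{uniform}, nonsingularity from positivity of the Pfaffian, and pairwise non-isomorphism by invoking the non-equivalence of the quartics $x^4+y^4+z^4+tx^2y^2$ established in Example \ref{1-parlap}. The paper likewise treats the Pfaffian computation as a straightforward verification, so there is nothing to add.
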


\begin{proposition}\label{1-par2}
Let $\mu_t\in V_{3,8}$ be defined by
$$
J_\mu(xZ_1+yZ_2+zZ_3)= \left[ \begin {smallmatrix} 0&0&-{t}^{2}x&0&y&0&0&\alpha_t\,z
\\0&0&0&-{t}^{2}x&0&y&z&0\\{t}^{2}
x&0&0&0&0&-z&y&0\\0&{t}^{2}x&0&0&-{\frac {z}{\alpha_t}
}&0&0&y\\-y&0&0&{\frac {z}{\alpha_t}}&0&0&x&0
\\0&-y&z&0&0&0&0&x\\0&-z&-y&0&-x&0
&0&0\\-\alpha_t\,z&0&0&-y&0&-x&0&0\end {smallmatrix}
 \right],
$$
where $\alpha_t:=\frac{2t^2}{2t^2+1-\sqrt{4t^2+1}}$.  Then $\mu_t$ with $t>1$ is a $1$-parameter family of pairwise non-isomorphic nonsingular algebras admitting nilsolitons with Pfaffian forms given by
$$
f_{\mu_t}(x,y,z)=\left({x}^{2}+{y}^{2}+{z}^{2} \right) ^{2}+t{x}^{2}{z}^{2}.
$$
\end{proposition}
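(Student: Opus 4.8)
The plan is to verify in turn the four assertions packaged into the statement—that each $\mu_t$ is nonsingular, admits a nilsoliton, has the stated Pfaffian form, and that distinct parameters give non-isomorphic algebras—organizing everything around the Pfaffian form, which controls both nonsingularity and the isomorphism problem. First I would compute the Pfaffian directly. Since $J_\mu(xZ_1+yZ_2+zZ_3)$ is an $8\times 8$ skew-symmetric matrix whose entries are linear in $x,y,z$, its Pfaffian is a ternary quartic, and a direct expansion (the value of $\alpha_t$ is engineered precisely so that the mixed terms collapse) should yield $f_{\mu_t}(x,y,z)=(x^2+y^2+z^2)^2+tx^2z^2$. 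For $t>1$ this form is strictly positive on $\RR^3\setminus\{0\}$, being the sum of $(x^2+y^2+z^2)^2>0$ and the nonnegative term $tx^2z^2$; hence $\det J_{\mu_t}(Z)=f_{\mu_t}(Z)^2>0$ for every nonzero $Z$, so $J_{\mu_t}(Z)$ is invertible and $\mu_t$ is nonsingular by the third condition in Definition~\ref{ns}.

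For the nilsoliton I would read off from the displayed matrix that $\{X_1,\dots,X_8,Z_1,Z_2,Z_3\}$ is a nice basis: every nonzero entry is a scalar multiple of a single variable among $x,y,z$, and each row contains exactly three nonzero entries, one in each of the three variables. Thus the metric algebra is uniform, and Lemma~\ref{uniform} (combined with Theorem~\ref{nilthm}) produces a nilsoliton inner product, i.e. an inner product for which \eqref{nilso} holds.

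The substantial point is pairwise non-isomorphism. By \cite{Sch} it suffices to show the Pfaffian forms are pairwise non-projectively-equivalent, and since the coordinate swap $y\leftrightarrow z$ carries $(x^2+y^2+z^2)^2+tx^2z^2$ to $(x^2+y^2+z^2)^2+tx^2y^2$, these are exactly the forms of Example~\ref{exinvform}. The natural first tool is therefore the $\Sl_3$-invariant ratio $I_6/I_3^2$ computed there. Here I expect the main obstacle: that ratio, viewed as a function of $t$, is \emph{not} injective on all of $(1,\infty)$—its derivative carries the factor $(t-12)$, so it strictly decreases on $(1,12]$ and strictly increases on $[12,\infty)$, and each value near the minimum at $t=12$ is attained twice. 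Consequently Example~\ref{exinvform} only separates parameters lying in a common subinterval, whereas the proposition claims separation across the whole range $t>1$.

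To overcome this I would argue geometrically rather than with a higher-degree invariant. Factoring over $\CC$, one has $f_{\mu_t}=(Q+\im\sqrt{t}\,xz)(Q-\im\sqrt{t}\,xz)$ with $Q=x^2+y^2+z^2$, so the projective quartic $\{f_{\mu_t}=0\}$ is a product of two conics spanning the pencil $\langle Q,\,xz\rangle$; its four base points (the singular points of the quartic) and the three degenerate members of the pencil are independent of $t$. Any projective equivalence $f_{\mu_{t_1}}\simeq f_{\mu_{t_2}}$ must respect this factorization, hence induce a Möbius transformation of the pencil line permuting the three degenerate members $\{\infty,2,-2\}$ and sending the distinguished pair $\{\pm\im\sqrt{t_1}\}$ to $\{\pm\im\sqrt{t_2}\}$. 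Running through the six such Möbius maps, I would check that only the identity and $w\mapsto -w$ carry a symmetric imaginary pair to a symmetric imaginary pair (the remaining four do so only at $t=12$, where they fix the pair), which forces $t_1=t_2$. This establishes non-equivalence of the Pfaffian forms, and thus non-isomorphism of the algebras, for all distinct $t>1$, closing the gap left by the invariant ratio.
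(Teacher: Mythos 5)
Your proposal is correct, and on the key point it diverges from the paper in a way that is worth recording. For nonsingularity, the Pfaffian computation, and the existence of a nilsoliton (nice basis, uniformity, Lemma \ref{uniform}) you follow exactly the paper's route. For the pairwise non-isomorphism, however, the paper simply cites Example \ref{exinvform}, and your diagnosis of why that is not enough is accurate: writing $g(t)=I_6(f_{\mu_t})/I_3(f_{\mu_t})^2$, one computes $g'(t)=2t(t+4)(t-12)(t+12)/(3t^2+16t+80)^3$, so $g$ decreases on $(1,12]$ and increases on $[12,\infty)$; since $g(4)=0$ and $g\to 0^-$ at infinity, every value in $(g(12),0)$ is attained once in $(4,12)$ and once in $(12,\infty)$. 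Thus the invariant ratio of Example \ref{exinvform} separates parameters only within each of the intervals $[0,12]$ and $[12,\infty)$ (as that example itself states), not across $t=12$, so the paper's proof as written leaves a gap for the full range $t>1$. Your replacement argument closes it and is sound: over $\CC$ the quartic splits as the two smooth conics $Q\pm\im\sqrt{t}\,xz$ with $Q=x^2+y^2+z^2$, any equivalence must preserve the pencil $\la Q,xz\ra$ (uniqueness of factorization into irreducibles), hence induces one of the six M\"obius maps permuting the degenerate members at $w=\infty,2,-2$ of the pencil $Q+w\,xz$; a direct check shows only $w\mapsto w$ and $w\mapsto -w$ carry a pair $\{\pm\im\sqrt{t_1}\}$ to a pair $\{\pm\im\sqrt{t_2}\}$ when $t_1\neq 12$, and the remaining four force $t_1=t_2=12$, so $t_1=t_2$ in every case. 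What your approach buys is a complete separation for all $t>1$ (indeed for all $t>0$) by elementary projective geometry of the conic pencil; what the paper's approach buys is a purely invariant-theoretic computation, but one that here only yields the conclusion on subintervals unless supplemented by a further invariant or by an argument like yours.
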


We note that the families given in the above three propositions are pairwise different, which easily follows from the non-equivalence between their Pfaffian forms.

\subsection{Non-existence of nilsolitons}
The following two propositions provide explicit examples of continuous families of nonsingular algebras of type $(3,8)$ which do not admit a nilsoliton inner product.
The algebras in these families are pairwise non-isomorphic since their Pfaffian forms are pairwise non-equivalent, as we have seen in the proof of Proposition \ref{3-par}.

\begin{proposition}\label{1-parno}
Let $\mu_t\in V_{3,8}$ be defined by
$$
J_{\mu_t}(xZ_1+yZ_2+zZ_3)=\left[ \begin {smallmatrix} 0&0&-x&-y&0&0&0&-z
\\0&0&yt&-x&0&0&-z&0\\x&-yt&0&-x&0
&-z&0&0\\y&x&x&0&-z&0&0&0\\0&0&0&z
&0&0&-x&-y\\0&0&z&0&0&0&y&-x\\0&z&0
&0&x&-y&0&-x\\z&0&0&0&y&x&x&0\end {smallmatrix} \right].
$$
Then $\mu_t$ with $t>1$ is a $1$-parameter family of pairwise non-isomorphic nonsingular algebras with Pfaffian forms given by
$$
f_{\mu_t}(x,y,z)=  \left( {x}^{2}+{y}^{2}+{z}^{2} \right)  \left( {x}^{2}+t{y}^{2}+{z}^{2} \right),
$$
which do not admit a nilsoliton inner-product.
\end{proposition}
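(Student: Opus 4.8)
The plan is to settle the three assertions — the value of the Pfaffian form, nonsingularity, and the non-existence of a nilsoliton — in turn, reserving essentially all the effort for the last one.

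First I would verify the Pfaffian by a direct expansion of $\Pf\bigl(J_{\mu_t}(xZ_1+yZ_2+zZ_3)\bigr)$ from the displayed $8\times 8$ matrix; this is routine (it suffices to check $\det J_{\mu_t}(Z)=f_{\mu_t}(Z)^2$ and fix the sign at one point), and I expect it to collapse to $f_{\mu_t}(x,y,z)=(x^2+y^2+z^2)(x^2+ty^2+z^2)$. Granting this, nonsingularity is immediate from the positive-polynomial criterion of Subsection \ref{pp}: for $t>1$ the form $f_{\mu_t}$ is a product of two positive definite quadratic forms, hence vanishes only at the origin, so $\mu_t$ is nonsingular. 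The pairwise non-isomorphism within the family is then read off from the Pfaffian exactly as in the proof of Proposition \ref{3-par}: a projective equivalence must preserve the factor $x^2+y^2+z^2$ (whose isotropy in $\Gl_3$ is $\Or(3)$) and carry $x^2+ty^2+z^2$ to $x^2+sy^2+z^2$ up to scaling and an orthogonal substitution, which forces $t=s$.

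The substance is the non-existence of a nilsoliton, and here I would invoke Theorem \ref{nilthm}: since $\mu_t$ is nonsingular, it admits a nilsoliton if and only if its $\Sl_8\times\Sl_3$-orbit is closed, so it suffices to prove this orbit is \emph{not} closed. The strategy, parallel to the proof of Corollary \ref{nil2m}, is to exhibit a one-parameter degeneration of $\mu_t$ to a genuinely different algebra in its orbit closure. Concretely, I would choose a diagonal one-parameter subgroup $\vp_s\in\Sl_8\times\{I_3\}$ acting only on $\ngo_1$, so that by the transformation rule of Subsection \ref{pf} (with $\det\psi=1$ and trivial $\ngo_2$-part) the Pfaffian is \emph{exactly} preserved, $f_{\vp_s\cdot\mu_t}=f_{\mu_t}$, along the whole path. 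I would then tune the diagonal exponents so that $\lambda_t:=\lim_{s\to\infty}\vp_s\cdot\mu_t$ exists, checking convergence entry-by-entry on $J_{\vp_s\cdot\mu_t}$ as in Lemma \ref{deg}, and so that $\lambda_t$ is uniform; by Lemma \ref{uniform} the limit $\lambda_t$ then admits a nilsoliton, and hence, by Theorem \ref{nilthm} again, its $\Sl_8\times\Sl_3$-orbit is closed. (One expects $\lambda_t$ to be, up to an $\Sl_3$-twist, the member of the family of Proposition \ref{3-par} sharing the Pfaffian of $\mu_t$.)

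It remains to see that $\lambda_t$ is not isomorphic to $\mu_t$, and this is the main obstacle: precisely because the two share the Pfaffian $f_{\mu_t}$, no invariant derived from the Pfaffian can separate them. I would instead distinguish them by the dimension of the automorphism group — the invariant singled out in the Introduction for this very pair of curves — computing $\Der(\mu_t)$ and $\Der(\lambda_t)$ from the linearized equation $\pi(\alpha,\beta)\mu=0$ in \eqref{actiong} and checking that $\dim\Aut(\lambda_t)>\dim\Aut(\mu_t)$, whence $\lambda_t\notin\Sl_8\times\Sl_3\cdot\mu_t$. With this in hand the argument closes quickly: $\lambda_t\in\overline{\Sl_8\times\Sl_3\cdot\mu_t}$ is a limit point lying outside the orbit, so the orbit of $\mu_t$ cannot be closed (a closed orbit would contain all its limit points, forcing $\lambda_t\cong\mu_t$), and therefore $\mu_t$ admits no nilsoliton by Theorem \ref{nilthm}. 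The delicate part throughout is the explicit choice of exponents in $\vp_s$ making the limit exist and uniform, together with the two derivation-algebra computations needed to compare $\dim\Aut(\mu_t)$ and $\dim\Aut(\lambda_t)$.
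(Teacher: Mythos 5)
Your plan coincides with the paper's proof: the paper degenerates $\mu_t$ by exactly such a diagonal subgroup, $\vp_s=\Diag(e^s,e^s,e^{-s},e^{-s},e^s,e^s,e^{-s},e^{-s},1,1,1)\in\Sl_8\times\{I_3\}$, obtains a limit $\tilde\mu_t$ in the orbit closure (which is indeed uniform, as you anticipate), distinguishes it from $\mu_t$ by $\dim\Der(\mu_t)=26$ versus $\dim\Der(\tilde\mu_t)=29$, and concludes via Theorem \ref{nilthm} that the non-closed orbit rules out a nilsoliton. The only inessential difference is that your intermediate step of invoking Lemma \ref{uniform} to close the orbit of the limit is not needed for the argument, and the pairwise non-isomorphism and Pfaffian claims are handled exactly as you propose.
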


\begin{proof}
If we consider $\lambda_s:=\vp_s\cdot\mu_t$, where
$$
\vp_s:=\Diag(e^s,e^s,e^{-s},e^{-s},e^s,e^s,e^{-s},e^{-s},1,1,1)\in\Sl_8\times\Sl_3,
$$
then it is easy to prove that $\lambda_s\to\tilde{\mu}_t$, as $s\to\infty$, where $\tilde{\mu}_t\in V_{3,8}$ is defined by
$$
J_{\tilde{\mu}_t}(xZ_1+yZ_2+xZ_3)=
\left[ \begin{smallmatrix} 0&0&-x&-y&0&0&0&-z
\\0&0&yt&-x&0&0&-z&0\\x&-yt&0&0&0&
-z&0&0\\y&x&0&0&-z&0&0&0\\0&0&0&z&0
&0&-x&-y\\0&0&z&0&0&0&y&-x\\0&z&0&0
&x&-y&0&0\\z&0&0&0&y&x&0&0\end{smallmatrix} \right],
$$
for all $t$.  This implies that $\tilde{\mu}_t\in\overline{\Sl_8\times\Sl_3\cdot\mu_t}$, and since $\dim{\Der(\mu_t)}=26$ and $\dim{\Der(\tilde{\mu}_t)}=29$ (which can be easily computed by using Maple) we have that $\mu_t$ is not isomorphic to $\tilde{\mu}_t$.  It follows that $\Sl_8\times\Sl_3\cdot\mu_t$ is not closed and thus $\mu_t$ does not admit a nilsoliton by Theorem \ref{nilthm}.
\end{proof}

\begin{proposition}\label{1-parno2}
Let $\lambda_t\in V_{3,8}$ be defined by
$$
J_{\lambda_t}(xZ_1+yZ_2+zZ_3)=\left[ \begin {smallmatrix} 0&0&-x&-y&0&0&0&-z
\\\noalign{\medskip}0&0&yt&-x&0&0&-z&0\\\noalign{\medskip}x&-yt&0&-y&0
&-z&-y&-y\\\noalign{\medskip}y&x&y&0&-z&0&-y&-y\\\noalign{\medskip}0&0
&0&z&0&0&-x&-y\\\noalign{\medskip}0&0&z&0&0&0&y&-x\\\noalign{\medskip}0
&z&y&y&x&-y&0&-y\\\noalign{\medskip}z&0&y&y&y&x&y&0\end{smallmatrix}
 \right].
$$
Then $\lambda_t$ with $t>1$ is a $1$-parameter family of pairwise non-isomorphic nonsingular algebras with Pfaffian forms given by
$$
f_{\lambda_t}(x,y,z)=  \left( {x}^{2}+{y}^{2}+{z}^{2} \right)  \left( {x}^{2}+t{y}^{2}+{z}^{2} \right),
$$
which do not admit a nilsoliton inner-product.
\end{proposition}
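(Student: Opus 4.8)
The plan is to follow the same three-step scheme as in the proof of Proposition \ref{1-parno}, since $\lambda_t$ has exactly the same Pfaffian form as the family treated there. First, nonsingularity is immediate: the form $f_{\lambda_t}(x,y,z)=(x^2+y^2+z^2)(x^2+ty^2+z^2)$ is positive for every $t>0$ (each factor is positive definite), so by the positivity criterion recalled in Section \ref{pp}, equivalently the third condition in Definition \ref{ns}, each $\lambda_t$ is nonsingular; the identification of $f_{\lambda_t}$ itself is a direct Pfaffian computation. Second, the pairwise non-isomorphism within the family has already been reduced, in the paragraph preceding the proposition, to the non-equivalence of the Pfaffian forms, which for the triples $(1,t,1)$ follows from the argument in the proof of Proposition \ref{3-par}. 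Thus the only real content is the non-existence of a nilsoliton, and by Theorem \ref{nilthm} this amounts to showing that the orbit $\Sl_8\times\Sl_3\cdot\lambda_t$ is not closed in $V_{3,8}$.

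To exhibit non-closedness I would degenerate $\lambda_t$ exactly as in Proposition \ref{1-parno}, using the diagonal one-parameter subgroup
$$
\vp_s:=\Diag(e^s,e^s,e^{-s},e^{-s},e^s,e^s,e^{-s},e^{-s},1,1,1)\in\Sl_8\times\Sl_3 .
$$
This subgroup splits the basis of $\ngo_1$ into the two weight blocks $\{X_1,X_2,X_5,X_6\}$ and $\{X_3,X_4,X_7,X_8\}$, and under the induced action an entry of $J_{\lambda_t}$ whose two indices lie in the same block is rescaled, while an entry with one index in each block is preserved. The key observation is that in $J_{\lambda_t}$ every nonzero entry having both indices in $\{3,4,7,8\}$ is one of the $\pm y$ terms, and there is no nonzero entry with both indices in $\{1,2,5,6\}$. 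Hence $\vp_s\cdot\lambda_t$ converges as $s\to\infty$ to the algebra $\tilde\lambda_t$ obtained by deleting all these $\pm y$ entries, so $\tilde\lambda_t\in\overline{\Sl_8\times\Sl_3\cdot\lambda_t}$. A weight bookkeeping shows that the surviving cross-block entries of $\lambda_t$ coincide with those of the limit $\tilde\mu_t$ appearing in Proposition \ref{1-parno}; in other words both curves collapse to the same degenerate algebra, $\tilde\lambda_t=\tilde\mu_t$.

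It then remains to check that $\lambda_t$ is not isomorphic to its limit $\tilde\lambda_t$, which I would do by comparing the dimensions of the derivation algebras, computed with Maple as in Proposition \ref{1-parno}. Since $\tilde\lambda_t=\tilde\mu_t$ we already have $\dim\Der(\tilde\lambda_t)=29$, whereas $\dim\Der(\lambda_t)$ comes out to a different value; as the two numbers disagree, $\lambda_t\not\cong\tilde\lambda_t$, the orbit $\Sl_8\times\Sl_3\cdot\lambda_t$ is not closed, and Theorem \ref{nilthm} yields the non-existence of a nilsoliton. This is precisely the mechanism announced in the introduction: $\mu_t$ and $\lambda_t$ share the Pfaffian $(x^2+y^2+z^2)(x^2+ty^2+z^2)$ yet are distinguished by the dimension of their automorphism (derivation) groups.

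The main obstacle is the derivation-dimension computation, since the whole argument hinges on the inequality $\dim\Der(\lambda_t)\neq\dim\Der(\tilde\lambda_t)$. The construction of the degeneration and the verification that the limit exists are routine weight bookkeeping, and are further simplified here by the observation that both curves degenerate to the same algebra $\tilde\mu_t$; what genuinely has to be confirmed is that the extra $\pm y$ entries of $\lambda_t$, which carry the whole difference between the two curves, actually change the dimension of the derivation algebra relative to the degenerate model, so that the coincidence of Pfaffian forms cannot force an isomorphism.
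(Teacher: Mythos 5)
Your proposal is correct and follows essentially the same route as the paper: the paper's proof of Proposition \ref{1-parno2} consists precisely of the remark that the argument of Proposition \ref{1-parno} applies verbatim with the same $\vp_s$ and the same limit $\tilde{\mu}_t$, now using that $\dim{\Der(\lambda_t)}=27$ for all $t$ (versus $29$ for $\tilde{\mu}_t$). Your identification of the degenerating entries (all the $\pm y$ terms with both indices in $\{3,4,7,8\}$) and of the common limit $\tilde{\lambda}_t=\tilde{\mu}_t$ is exactly right, and you correctly isolate the derivation-dimension computation as the one fact that must be verified.
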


The proof follows in much the same way as the proof of Proposition \ref{1-parno}, even with the same $\vp_s$ and $\tilde{\mu}_t$, and we now use that $\dim{\Der(\lambda_t)}=27$ for all $t$.  This also shows that the families in the above two propositions are different.

\section{Automorphisms of nonsingular algebras}\label{aut}

We study in this section automorphisms and derivations of $2$-step algebras which are nonsingular.  Our main contribution here is to provide examples other than $H$-type algebras with certain maximality properties concerning the space of derivations.

The Lie algebra of derivations of a $2$-step algebra $\mu\in V_{n,m}$ of type $(n,m)$ decomposes as
\begin{equation}\label{derdec}
\Der(\mu)=\RR \left[\begin{array}{cc} I_m&0\\ 0&2I_n \end{array}\right] \oplus \left[\begin{array}{cc} 0&0\\ \ast&0 \end{array}\right] \oplus \Der_{gr}(\mu),
\end{equation}
where $\Der_{gr}(\mu)$ is the subalgebra of graded derivations given by
$$
\Der_{gr}(\mu):=\left\{ \left[\begin{smallmatrix} B&0\\ 0&A \end{smallmatrix}\right]: B\in\slg_m, \; A\in\glg_n, \;
B^tJ_\mu(Z)+J_\mu(Z)B=J_\mu(A^tZ), \;\forall Z\in\ngo_2\right\}.
$$
Indeed, any linear map taking $\ngo_1$ to $\ngo_2$ and vanishing on $\ngo_2$ is a derivation and any derivation preserving the decomposition $\ngo=\ngo_1\oplus\ngo_2$ can be written as
$$
\left[\begin{smallmatrix} B&0\\ 0&A \end{smallmatrix}\right] =\left[\begin{smallmatrix} \tr{B}/mI_m&0\\ 0&2\tr{B}/mI_n \end{smallmatrix}\right] +\left[\begin{smallmatrix} B-\tr{B}/mI_m&0\\ 0&A-2\tr{B}/mI_n \end{smallmatrix}\right].
$$
This is the Lie algebra of the Lie subgroup of $\Aut(\mu)$ given by
$$
\Aut_{gr}(\mu):=\left\{ \left[\begin{smallmatrix} \psi&0\\ 0&\vp \end{smallmatrix}\right]: \psi\in\Sl_m, \; \vp\in\Gl_n, \;
\psi^tJ_\mu(Z)\psi=J_\mu(\vp^tZ), \;\forall Z\in\ngo_2\right\}.
$$
It follows that $f_\mu(\vp^tZ)=f_\mu(Z)$ for all $Z$, that is, $\vp^t$ is in the automorphism group $\Aut(f_\mu)$ of the Pfaffian form $f_\mu$.  When $\mu$ is nonsingular, one has that $f_\mu>0$, and consequently $\Aut(f_\mu)$ is compact.  The closure of the projection of $\Aut_{gr}(\mu)$ on $\Sl_n=\Sl(\ngo_2)$ is therefore compact, which yields the next result.

\begin{theorem}\label{autcomp}\cite[Theorem 2.1]{KplTrb}
If $\mu$ is nonsingular, then there exists an inner product on the center $\ngo_2$ which is invariant under $\Aut_{gr}(\mu)$.
\end{theorem}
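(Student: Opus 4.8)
The plan is to build the invariant inner product by averaging and to reduce the whole statement to the positivity of the Pfaffian form established in Section \ref{pp}. First I would record that $\Aut_{gr}(\mu)$ acts on $\ngo_2$ only through its second diagonal block, and that the assignment $\left[\begin{smallmatrix} \psi&0\\ 0&\vp \end{smallmatrix}\right]\mapsto\vp$ is a group homomorphism from $\Aut_{gr}(\mu)$ into $\Gl(\ngo_2)$; denote its image by $P\subseteq\Gl(\ngo_2)$. An inner product on $\ngo_2$ is $\Aut_{gr}(\mu)$-invariant exactly when it is $P$-invariant, and (since the orthogonal group of a fixed inner product is closed) this is equivalent to invariance under the closure $\overline{P}$. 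Hence it suffices to show that $\overline{P}$ is a \emph{compact} subgroup of $\Gl(\ngo_2)$: any such group preserves some inner product, obtained by averaging an arbitrary one against its Haar measure.

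The key step is the compactness of $\overline{P}$, which I would extract from the defining relation $\psi^tJ_\mu(Z)\psi=J_\mu(\vp^tZ)$. Taking Pfaffians of both sides and using the congruence identity $\Pf(\psi^tB\psi)=\det(\psi)\,\Pf(B)$ together with $\psi\in\Sl_m$, one obtains $f_\mu(\vp^tZ)=\det(\psi)f_\mu(Z)=f_\mu(Z)$ for every $Z\in\ngo_2$. Thus $\vp^t$ lies in the linear automorphism group $\Aut(f_\mu):=\{g\in\Gl(\ngo_2):f_\mu\circ g=f_\mu\}$, so that $P^t\subseteq\Aut(f_\mu)$. Since transposition is a homeomorphism of $\Gl(\ngo_2)$, the relative compactness of $\overline{P}$ will follow once $\Aut(f_\mu)$ is shown to be compact.

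This is where nonsingularity is used. By Section \ref{pp}, $\mu$ nonsingular forces $f_\mu$ to be positive, so with $d=m/2$ one has $f_\mu>0$ on the unit sphere $S$. Putting $m_0:=\min_S f_\mu>0$ and $M_0:=\max_S f_\mu$, homogeneity gives $m_0\|x\|^d\le f_\mu(x)\le M_0\|x\|^d$ for all $x$. For $g\in\Aut(f_\mu)$ and $x\in S$ this yields $m_0\|gx\|^d\le f_\mu(gx)=f_\mu(x)\le M_0$, hence $\|gx\|\le (M_0/m_0)^{1/d}$, so the operator norms in $\Aut(f_\mu)$ are uniformly bounded. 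As $\Aut(f_\mu)$ is closed in $\Gl(\ngo_2)$ (being cut out by the polynomial identities $f_\mu\circ g=f_\mu$), it is compact; therefore $\overline{P}$ is compact and the averaged inner product proves the theorem. I expect the compactness of $\Aut(f_\mu)$ for a positive form to be the single genuinely quantitative ingredient, while the reduction to it and the final averaging are formal.
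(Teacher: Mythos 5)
Your proposal is correct and follows essentially the same route as the paper: the paper also passes to the projection onto $\Gl(\ngo_2)$, uses $\psi\in\Sl_m$ and the Pfaffian congruence identity to conclude $\vp^t\in\Aut(f_\mu)$, invokes positivity of $f_\mu$ (from nonsingularity) to get compactness of $\Aut(f_\mu)$ and hence of the closure of the projection, and then averages. Your explicit norm bound for the compactness of $\Aut(f_\mu)$ just fills in a step the paper leaves implicit.
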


Let us assume from now on that our fixed inner product $\ip$ on $\ngo_2$ is $\Aut_{gr}(\mu)$-invariant for any nonsingular $\mu$ we consider.  By defining the ideal of $\Der_{gr}(\mu)$,
$$
\Der_0(\mu):=\left\{ \left[\begin{smallmatrix} B&0\\ 0&A \end{smallmatrix}\right]\in\Der_{gr}(\mu):A=0\right\},
$$
and the corresponding normal subgroup of $\Aut(\mu)$,
$$
\Aut_0(\mu):=\left\{ \left[\begin{smallmatrix} \psi&0\\ 0&\vp \end{smallmatrix}\right]\in\Aut_{gr}(\mu):\vp=I\right\},
$$
one deduces the following.

\begin{corollary}\label{autcomp2}\cite[Corollary 2.4]{KplTrb}
For any nonsingular $\mu$ of type $(n,m)$,
$$
\dim{\Der_{gr}(\mu)/\Der_0(\mu)}=\dim{\Aut_{gr}(\mu)/\Aut_0(\mu)}\leq \frac{n(n-1)}{2}.
$$
\end{corollary}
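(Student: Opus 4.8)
The plan is to realize both quotients as (the dimension of) a single object---the image of the projection of $\Aut_{gr}(\mu)$ onto $\Gl(\ngo_2)$---and then to bound that image using the invariant inner product supplied by Theorem \ref{autcomp}. The essential input is already contained in that theorem; what remains is a short bookkeeping argument.

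First I would dispatch the equality. By construction $\Der_{gr}(\mu)=\Lie(\Aut_{gr}(\mu))$ (as noted in the text) and, directly from the defining conditions $A=0$ and $\vp=I$, one has $\Der_0(\mu)=\Lie(\Aut_0(\mu))$. Since for a Lie group $G$ with closed subgroup $H$ one has $\dim{G/H}=\dim{\Lie(G)}-\dim{\Lie(H)}$, it follows at once that
$$
\dim{\Der_{gr}(\mu)/\Der_0(\mu)}=\dim{\Aut_{gr}(\mu)/\Aut_0(\mu)},
$$
which is the first assertion. For the inequality, consider the projection $p:\left[\begin{smallmatrix}\psi&0\\0&\vp\end{smallmatrix}\right]\mapsto\vp$, which is a homomorphism of Lie groups $\Aut_{gr}(\mu)\to\Gl(\ngo_2)$ with kernel precisely $\Aut_0(\mu)$. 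Hence $\Aut_{gr}(\mu)/\Aut_0(\mu)$ is isomorphic to the image $\Aut_{gr}(\mu)|_{\ngo_2}:=p(\Aut_{gr}(\mu))$ and in particular has the same dimension.

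Now I would invoke our standing assumption that $\ip$ on $\ngo_2$ is $\Aut_{gr}(\mu)$-invariant, which is legitimate by Theorem \ref{autcomp}. Since an element $\left[\begin{smallmatrix}\psi&0\\0&\vp\end{smallmatrix}\right]\in\Aut_{gr}(\mu)$ acts on $\ngo_2$ exactly by $\vp$, invariance means $\la\vp Z,\vp W\ra=\la Z,W\ra$ for all $Z,W\in\ngo_2$; that is, $\Aut_{gr}(\mu)|_{\ngo_2}\subset\Or(\ngo_2)$. An immersed Lie subgroup of $\Or(\ngo_2)\cong\Or(n)$ has dimension at most $\dim{\Or(n)}=\frac{n(n-1)}{2}$, so
$$
\dim{\Aut_{gr}(\mu)/\Aut_0(\mu)}=\dim{\Aut_{gr}(\mu)|_{\ngo_2}}\leq\frac{n(n-1)}{2},
$$
completing the argument. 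There is no serious obstacle here: once Theorem \ref{autcomp} has produced the invariant inner product, the corollary reduces to the elementary fact that a group acting by isometries on an $n$-dimensional inner product space embeds in $\Or(n)$. The only points requiring a line of care are the identifications $\Der_0(\mu)=\Lie(\Aut_0(\mu))$ and $\ker p=\Aut_0(\mu)$, both of which are immediate from the definitions.
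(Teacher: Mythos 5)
Your proof is correct and follows the same route the paper takes: identify $\Aut_{gr}(\mu)/\Aut_0(\mu)$ with the projection of $\Aut_{gr}(\mu)$ onto $\Gl(\ngo_2)$ and bound it inside the orthogonal group of the invariant inner product furnished by Theorem \ref{autcomp}. The identifications $\Der_0(\mu)=\Lie(\Aut_0(\mu))$ and $\ker p=\Aut_0(\mu)$ that you flag are indeed the only points needing care, and you handle them correctly.
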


\begin{example}\label{Htype}
It is well known that $\dim{\Der_{gr}(\mu)/\Der_0(\mu)}= \frac{n(n-1)}{2}$ for any $H$-type algebra $\mu$ (see \cite{Rhm,Sal}).  Indeed,
$$
\left[\begin{smallmatrix} J_\mu(Z)&0\\ 0&R_Z \end{smallmatrix}\right]\in\Aut(\mu), \qquad \forall Z\in\ngo_2, \quad \|Z\|=1,
$$
where $R_Z(W)=2\la Z,W\ra Z-W$ is the reflection with respect to the hyperplane $\{ Z\}^\perp$ in the center $\ngo_2$.
\end{example}

A natural question arises: which nonsingular algebras of type $(n,m)$ satisfy any of the following equivalent conditions? Do these conditions hold only for $H$-type algebras?

\begin{itemize}
\item $\dim{\Der_{gr}(\mu)/\Der_0(\mu)}= \frac{n(n-1)}{2}$.

\item $\Der_{gr}(\mu)/\Der_0(\mu)\simeq\sog(n)$.

\item For any $A\in\sog(n)$ there exists $B\in\slg_m$ such that $\left[\begin{smallmatrix} B&0\\ 0&A \end{smallmatrix}\right]\in\Der(\mu)$.

\item For any $\vp\in\SO(n)$ there exists $\psi\in\Sl_m$ such that $\left[\begin{smallmatrix} \psi&0\\ 0&\vp \end{smallmatrix}\right]\in\Aut(\mu)$.
\end{itemize}

It is worthwhile noting that these conditions imply that the Pfaffian form is necessarily equal, up to scaling, to $f_\mu=(x_1^2+\dots+x_n^2)^{m/4}$, as this is the only $n$-ary $(m/2)$-ic form which is $\SO(n)$-invariant.  In particular, $\mu$ must be of $\tilde{H}$-type.

We now show that already for type $(2,m)$, there are examples of nonsingular algebras satisfying the above maximality conditions which are not $H$-type.

\begin{proposition}\label{derik}
If $\sca=\{ (\im,k_1),\dots,(\im,k_r)\}$, then the corresponding nonsingular $2$-step algebra $\mu_{\sca}$ of type $(2,m)$ (see Section \ref{2m}) satisfies that  $\Der_{gr}(\mu_{\sca})/\Der_0(\mu_{\sca})\simeq\sog(2)$.
\end{proposition}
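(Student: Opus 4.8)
The plan is to compute the image $\hg$ of the projection $\Der_{gr}(\mu_\sca)\to\glg_2$ sending $\left[\begin{smallmatrix}B&0\\ 0&A\end{smallmatrix}\right]\mapsto A$. Its kernel is exactly $\Der_0(\mu_\sca)$, so $\Der_{gr}(\mu_\sca)/\Der_0(\mu_\sca)\cong\hg$ as Lie algebras, and the assertion becomes $\hg=\sog(2)$. Since every $\alpha_i=\im$ (so $a_i=0$, $b_i=1$), formula \eqref{PfmuS} gives $f_{\mu_\sca}(x,y)=(x^2+y^2)^{m/4}$ up to a nonzero scalar. This form is positive, and $f_{\mu_\sca}\circ\vp=f_{\mu_\sca}$ forces $\|\vp v\|=\|v\|$ for all $v$, whence $\Aut(f_{\mu_\sca})=\Or(2)$. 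As in the paragraph preceding Theorem \ref{autcomp}, each $\left[\begin{smallmatrix}\psi&0\\ 0&\vp\end{smallmatrix}\right]\in\Aut_{gr}(\mu_\sca)$ has $\vp^t\in\Aut(f_{\mu_\sca})=\Or(2)$, hence $\vp\in\Or(2)$; passing to Lie algebras yields $\hg\subseteq\sog(2)$, and in particular $\dim\hg\le 1$ (as also follows from Corollary \ref{autcomp2}). It thus remains to exhibit one nonzero element of $\hg$, i.e.\ to rule out $\hg=0$.

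For this I would invoke the parametrization of Section \ref{2m}. The multiset $\sca=\{(\im,k_1),\dots,(\im,k_r)\}$ is supported at the single point $\im$, so every real M\"obius transformation $T$ as in \eqref{isomuS} fixing $\im$ automatically satisfies $T(\sca)=\sca$. A one-line computation with $Tz=\frac{az+b}{cz+d}$ shows $T(\im)=\im$ iff $d=a$ and $c=-b$, so these $T$ form the group $\{aI+bJ_0:a^2+b^2\neq 0\}$, with $J_0=\left[\begin{smallmatrix}0&1\\ -1&0\end{smallmatrix}\right]$, which modulo scalars is the one-parameter rotation subgroup of $\mathrm{PSL}_2(\RR)$ fixing $\im$. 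By the classification recalled in Section \ref{2m} (its ``if'' direction), each such $T$ is induced by a graded automorphism of $\mu_\sca$, so the image $G_2:=\Aut_{gr}(\mu_\sca)|_{\ngo_2}\subseteq\Or(2)$ contains a continuum of pairwise distinct elements (distinct $T$ force distinct $\vp$ up to sign). If $\hg$ were $0$, then $\dim G_2=\dim\Der_{gr}(\mu_\sca)/\Der_0(\mu_\sca)=0$ by Corollary \ref{autcomp2}, forcing the subgroup $G_2$ of the compact group $\Or(2)$ to be finite, a contradiction. Hence $\hg\neq 0$, and with $\hg\subseteq\sog(2)$ we conclude $\hg=\sog(2)$, that is, $\Der_{gr}(\mu_\sca)/\Der_0(\mu_\sca)\cong\sog(2)$.

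The delicate point is the lower bound: one must check that the ``if'' direction of the classification genuinely produces a graded automorphism with $\psi\in\Sl_m$ (rescaling $\psi$ into $\Sl_m$ only multiplies $\vp$ by a positive scalar and so does not alter the underlying M\"obius map), and that the resulting elements of $G_2$ are indeed distinct. A more hands-on alternative is to solve the infinitesimal equations directly: writing $J_i:=J_{\mu_\sca}(Z_i)$, one seeks $B\in\slg_m$ with $B^tJ_1+J_1B=-J_2$ and $B^tJ_2+J_2B=J_1$. Decomposing $B$ with respect to the symplectic adjoint $B\mapsto -J_1^{-1}B^tJ_1$, the first equation fixes the anti-self-adjoint component of $B$ to be $-\tfrac12 J_1^{-1}J_2$ and leaves the self-adjoint (i.e.\ symplectic) component $S$, with $S^tJ_1+J_1S=0$, free; substituting into the second equation turns it into an inhomogeneous linear system for $S$. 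For an $H$-type block ($k_i=1$) one may take $S=0$, recovering $B=\tfrac12 J_1J_2$ as in Example \ref{Htype}; the real work for $k_i>1$ is that $J_1^{-1}J_2$ is no longer a complex structure (it carries nontrivial Jordan blocks), so $S\neq 0$ is needed, and the solvability of this system is precisely what the M\"obius argument guarantees abstractly.
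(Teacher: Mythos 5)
Your argument is correct in outline but takes a genuinely different route from the paper. The paper's proof is entirely computational: for $\sca=\{(\im,k)\}$ it writes down explicit $2k\times 2k$ matrices $M_1,M_2$ and verifies directly that $\left[\begin{smallmatrix} M_1&&\\ &M_2&\\ &&J_0\end{smallmatrix}\right]$, with $J_0=\left[\begin{smallmatrix}0&1\\ -1&0\end{smallmatrix}\right]$ acting on $\ngo_2$, is a derivation (using the identities $M_1^tA_1+A_1M_2=A_2$ and $M_1^tA_2+A_2M_2=-A_1$ for the blocks of \eqref{Jcomplex}), and then assembles the general case blockwise; the upper bound $\dim\le 1$ is, exactly as in your argument, Corollary \ref{autcomp2}. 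You replace the explicit derivation by a soft argument: the M\"obius stabilizer of the multiset $\sca$ (supported at the single point $\im$) is a one-parameter rotation group, which the classification of Section \ref{2m} lifts to an uncountable subset of $\Aut_{gr}(\mu_\sca)|_{\ngo_2}\subseteq\Or(2)$, incompatible with $\Der_{gr}(\mu_\sca)/\Der_0(\mu_\sca)=0$ since $\Aut_{gr}(\mu_\sca)$ is a real algebraic group with finitely many components. What this buys is a computation-free proof that moreover explains \emph{why} the maximality holds exactly for $\sca$ supported at one point (cf.\ the remark after the proposition that $\Der_{gr}/\Der_0=0$ for all other nonsingular $\sca$); what it costs is that the lower bound leans on a strengthening of the classification as recalled in Section \ref{2m}: the displayed ``iff'' only asserts that $T\sca=\sca'$ implies $\mu_\sca\cong\mu_{\sca'}$, whereas you need the (true, and implicit in the Kronecker pencil theory underlying \cite{Ggr,LvsTrb}) refinement that the isomorphism may be chosen graded with $\ngo_2$-component a scalar multiple of the matrix of $T$ in \eqref{isomuS}. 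You correctly flag this as the delicate point; your sketched ``hands-on alternative'' is essentially the computation the paper actually carries out, and would need to be completed for a self-contained proof. Your reduction to the image $\hg$ of the projection onto $\glg_2$ and the identification $\Aut(f_{\mu_\sca})=\Or(2)$ are both correct.
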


\begin{remark}
This result has been independently proved in \cite[Theorem 3.5]{KplTrb}, where it is in addition obtained that $\Der_{gr}(\mu_{\sca})/\Der_0(\mu_{\sca})=0$ for any other nonsingular algebra of type $(2,m)$.
\end{remark}

\begin{proof}
Suppose first that $\sca=\{ (\im,k)\}$.  It is an involved but straightforward computation to see that
$$
\left[\begin{smallmatrix} M_1&&&\\ &M_2&&\\ &&0&1\\ &&-1&0 \end{smallmatrix}\right]\in\Der(\mu_{\sca}),
$$
where $M_1$ and $M_2$ are the $(2k\times 2k)$-matrices defined by
{\small
$$
M_1=\left[
  \begin{array}{cc|cc|cc|cc|cc|cc}
    0& 0&-1& 0&  &  &  &  &  &  &  &  \\
    0& 0& 0&-1&  &  &  &  &  &  &  &  \\ \hline
     &  & 0& 2& 0&  &  &  &  &  &  &  \\
     &  &-2& 0&  & 0&  &  &  &  &  &  \\ \hline
     &  &  &  & 0& 4& 1& 0 &  &  &  &  \\
     &  &  &  &-4& 0& 0& 1 &  &  &  &  \\ \hline
     &  &  &  &  &  & 0& 6& \ddots &  &  &  \\
     &  &  &  &  &  &-6& 0& & \ddots  &  &  \\ \hline
     &  &  &  &  &  &  &  & \ddots& & k-3 & 0 \\
     &  &  &  &  &  &  &  & & \ddots& 0 & k-3 \\ \hline
     &  &  &  &  &  &  &  &  &  & 0 & 2(k-1) \\
     &  &  &  &  &  &  &  &  &  & -2(k-1) & 0 \\
  \end{array}
\right] \
$$
and
$$
M_2=\left[
  \begin{array}{cc|cc|cc|cc|cc|cc|cc}
    0 & -(2k-1) & -(k-2) & 0 &  &  &  &  &  &  &  &  &  \\
    (2k-1) & 0 & 0 & -(k-2) &  &  &  &  &  &  &  &  &  \\ \hline
         &  & 0 & -(2k-3) & \ddots &  & &  &  &  &  &  &  \\
         &  & (2k-3) & 0 & & \ddots & &  &  &  &  &  &  \\ \hline
         &  &  &  & \ddots & & -3& 0&  &  &  &  &  \\
         &  &  &  &  & \ddots & 0&-3&  &  &  &  &  \\ \hline
         &  &  &  & & & 0 & -7  & -2 & 0 &  &  &  \\
         &  &  &  & & & 7 & 0  & 0 & -2 &  & &  \\ \hline
     &  &  &  &  &  &  & & 0 & -5 & -1 & 0 &  \\
     &  &  &  &  &  &  & & 5 & 0 & 0 & -1 &  \\ \hline
     &  &  &  &  &  &  & &   &   & 0 &-3 & 0 & 0 \\
     &  &  &  &  &  &  &  &  &   & 3 & 0 & 0 & 0 \\ \hline
     &  &  &  &  &  &  &  &  &   &   &   & 0 &-1 \\
     &  &  &  &  &  &  &  &  &   &   &   & 1 & 0
  \end{array}
\right].
$$ }
One can use the formula $J_\mu(Z_i)=\left[\begin{smallmatrix} 0&A_i\\ -A_i^t&0 \end{smallmatrix}\right]$ given in (\ref{Jcomplex}) and check that
$$
M_1^tA_1+A_1M_2=A_2, \qquad M_1^tA_2+A_2M_2=-A_1.
$$
Finally, if $\sca=\{ (\im,k_1),\dots,(\im,k_r)\}$, then we can define the required derivation made of blocks as above, concluding the proof.
\end{proof}

Note that an algebra $\mu_{\sca}$ as in Proposition \ref{derik} is of $H$-type if and only if $k_1=\dots =k_r=1$, which is also equivalent to the existence of a nilsoliton metric (see Corollary \ref{nil2m}).

Let us now consider skew-symmetric derivations and orthogonal automorphisms.  For each algebra $\mu$ of type $(n,m)$ we have that
$$
\kg(\mu):=\Der(\mu)\cap\sog(\ngo)= \left\{ \left[\begin{smallmatrix} B&0\\ 0&A \end{smallmatrix}\right]: B\in\sog(m), \; A\in\sog(n), \;
[B,J_\mu(Z)]=J_\mu(AZ), \;\forall Z\in\ngo_2\right\},
$$
and if
$$
\kg_0(\mu):=\left\{ \left[\begin{smallmatrix} B&0\\ 0&A \end{smallmatrix}\right]\in\kg(\mu):A=0\right\},
$$
then it is also natural to ask ourselves which nonsingular algebras of type $(n,m)$ satisfy
\begin{equation}\label{max}
\kg(\mu)/\kg_0(\mu)\simeq\sog(n).
\end{equation}
This stronger property is requiring any $A\in\sog(n)$ being extendable to a skew-symmetric derivation of $\mu$.  $H$-type algebras also satisfy this (see Example \ref{Htype}), but it cannot be deduced from the proof of Proposition \ref{derik} that the algebras $\mu_{\sca}$'s considered there do.

The corresponding groups of automorphisms are given by
$$
K(\mu):=\Aut(\mu)\cap\Or(\ngo)=\left\{ \left[\begin{smallmatrix} \psi&0\\ 0&\vp \end{smallmatrix}\right]: \psi\in\SO(m), \; \vp\in\SO(n), \;
\psi J_\mu(Z)\psi^{-1}=J_\mu(\vp Z), \;\forall Z\in\ngo_2\right\}.
$$
and
$$
K_0(\mu):=\left\{ \left[\begin{smallmatrix} \psi&0\\ 0&\vp \end{smallmatrix}\right]\in K(\mu):\vp=I\right\}.
$$

\subsection{$2$-step algebras of $Rep$-type}
We now consider a class of $2$-step algebras attached to finite-dimensional real representations of compact Lie groups.  Our aim is to show that condition (\ref{max}) can also hold for nonsingular algebras which are not of $H$-type.

\begin{definition}\label{reptype}
A $2$-step algebra $\mu\in V_{n,m}$ is said to be of {\it Rep-type} if the following holds:
\begin{itemize}
\item $\ngo_2$ is a compact Lie algebra (say with Lie bracket $\lb$).

\item $J_\mu:\ngo_2\longrightarrow\sog(\ngo_1)$ is a representation of $\ngo_2$, i.e.
$$
J_\mu([Z,W])=[J_\mu(Z),J_\mu(W)], \qquad \forall Z,W\in\ngo_2.
$$
\end{itemize}
\end{definition}

We refer to \cite{EbrHbr,manus,Ebr3} for more detailed expositions on $Rep$-type algebras.  Some of their nice and interesting properties are now listed, all of which can be easily checked.

\begin{itemize}
\item $\mu$ is of type $(n,m)$ if and only if the representation $J_\mu$ is faithful.

\item We can assume from now on that the fixed inner product we have on $\ngo_2$ is $\ad{\ngo_2}$-invariant, i.e. any adjoint map $\ad{Z}$ of the Lie algebra $\ngo_2$ is skew-symmetric.

\item Each $Z\in\ngo_2$ defines a skew-symmetric derivation of $\mu$ by
$$
\left[\begin{smallmatrix} J_\mu(Z)&0\\ 0&\ad{Z} \end{smallmatrix}\right]\in\Der(\mu).
$$
(see \cite[Theorem 3.12]{manus}).

\item $\mu$ is nonsingular if and only if $\ngo_2=\sug(2)$ ($=\sog(3)$) and any irreducible subrepresentation of $(\ngo_1,J_\mu)$ is even dimensional (or equivalently, a copy of the $2(k+1)$-dimensional $\sug(2)$-representation $P_{2,k}(\CC)$, $k$ odd, of binary $k$-ic complex forms viewed as real, see e.g. \cite{BrctDc}).  This can be deduced as follows: in order to have that $J_\mu(Z)$ is invertible for any nonzero $Z$ it is necessary and sufficient that $\rank(\ngo_2)=1$ and the weights of the representation $(\ngo_1,J_\mu)$ being all nonzero.
\end{itemize}

We therefore obtain the following result by combining the above observations.

\begin{proposition}
If $\mu\in V_{3,m}$ is a nonsingular algebra of $Rep$-type, then
$$
\kg(\mu)/\kg_0(\mu)\simeq\sog(3).
$$
\end{proposition}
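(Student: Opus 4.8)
The plan is to produce skew-symmetric derivations directly out of the representation structure and to show that their $\ngo_2$-parts already exhaust $\sog(\ngo_2)$; the conclusion will then follow from the first isomorphism theorem applied to the projection
$$
p:\kg(\mu)\longrightarrow\sog(n), \qquad \left[\begin{smallmatrix} B&0\\ 0&A\end{smallmatrix}\right]\longmapsto A,
$$
whose kernel is by definition $\kg_0(\mu)$ and which is a Lie algebra homomorphism (the bracket of two block-diagonal elements is block-diagonal, and $p$ reads off the lower block).

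First I would invoke the nonsingularity criterion for $Rep$-type algebras recorded just above the statement: since $\mu\in V_{3,m}$ is nonsingular of $Rep$-type, its center is forced to be $\ngo_2=\sug(2)=\sog(3)$ as a Lie algebra (the condition of being type $(3,m)$ already pins down $\dim\ngo_2=3$). I would then use that the fixed inner product on $\ngo_2$ is $\ad{\ngo_2}$-invariant, which makes every $\ad{Z}$ skew-symmetric, so $\ad{Z}\in\sog(\ngo_2)$.

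The main input is the third listed property of $Rep$-type algebras: for every $Z\in\ngo_2$ the operator $\left[\begin{smallmatrix} J_\mu(Z)&0\\ 0&\ad{Z}\end{smallmatrix}\right]$ is a skew-symmetric derivation of $\mu$, i.e. it lies in $\kg(\mu)$. This is exactly the defining relation $[J_\mu(Z),J_\mu(W)]=J_\mu([Z,W])$ of $\kg(\mu)$, which is nothing but the fact that $J_\mu$ is a representation of $\ngo_2$. Applying $p$ to these particular elements shows that the image of $p$ contains $\ad(\ngo_2)$, so it suffices to prove that $\ad(\ngo_2)=\sog(\ngo_2)$.

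This last point is where I would use that $\ngo_2=\sog(3)$ is simple: the kernel of $\ad:\ngo_2\to\sog(\ngo_2)$ is the center of $\ngo_2$ and hence trivial, so $\ad$ is injective, and since $\dim\ngo_2=3=\dim\sog(\ngo_2)$ it is in fact an isomorphism onto $\sog(\ngo_2)$ (equivalently, the adjoint representation of $\sog(3)$ is its standard vector representation on $\RR^3$). Thus $p$ is surjective, and as $\ker p=\kg_0(\mu)$, the first isomorphism theorem gives $\kg(\mu)/\kg_0(\mu)\simeq\sog(\ngo_2)\simeq\sog(3)$, as desired. I do not expect a genuine obstacle here: the only substantive step is the surjectivity of $\ad$, which is immediate from simplicity, and the image of $p$ is contained in $\sog(3)$ by definition, so no larger algebra can arise.
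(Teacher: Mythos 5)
Your proof is correct and follows essentially the same route as the paper, which obtains the proposition by combining the listed observations on $Rep$-type algebras: the skew-symmetric derivations $\left[\begin{smallmatrix} J_\mu(Z)&0\\ 0&\ad{Z}\end{smallmatrix}\right]$ show the projection onto $\sog(\ngo_2)$ hits $\ad(\ngo_2)$, and nonsingularity forces $\ngo_2=\sug(2)=\sog(3)$, whose adjoint representation is all of $\sog(3)$. You have merely made explicit the final step (surjectivity of $\ad$ via simplicity and dimension count) that the paper leaves implicit.
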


Therefore, any nonsingular $\mu$ of $Rep$-type of type $(3,m)$, which is not of $H$-type, provides a counterexample to \cite[Theorem 2.5]{KplTrb}.

It is easy to see that a $Rep$-type $\mu$ is of $H$-type if and only if $\ngo_2=\sug(2)$ and $(\ngo_1,J_\mu)$ is a sum of copies of the standard representation $\CC^2$ of $\sug(2)$. The simplest nonsingular $\mu$ of $Rep$-type not of $H$-type is $\ngo_2=\sug(2)$ and the representation $(\ngo_1,J_\mu)=P_{2,3}(\CC)=\CC^4$ viewed as real.  This is precisely the example of type $(3,8)$ considered in Example \ref{su2}, and an elementary way to see that it is not of $H$-type is by noticing that $J_\mu(\ngo_2)$ acts irreducibly on $\ngo_1$.

Any $Rep$-type $2$-step algebra admits a nilsoliton metric (see \cite{EbrHbr,cruzchica}).  Under the assumptions that $\ngo_2$ is simple and $J_\mu$ is irreducible, we have that $(\ngo,\mu,\ip)$ is a nilsoliton itself.  Indeed, since $(Z,W):=\tr{J_\mu(Z)J_\mu(W)}$ is $\ad{\ngo_2}$-invariant and the Casimir operator $\sum J_\mu(Z_i)^2$ intertwines the representation, we obtain that condition (\ref{nilso}) holds by Schur's Lemma.

\end{document}